\newcommand{\bfC}{{\boldsymbol C}}
\newcommand{\bfE}{{\boldsymbol E}}
\newcommand{\bfH}{{\boldsymbol H}}
\newcommand{\bff}{{\boldsymbol f}}
\newcommand{\bfu}{{\boldsymbol u}}
\newcommand{\bfuini}{{\bfu_0}}
\newcommand{\bfv}{{\boldsymbol v}}
\newcommand{\bfw}{{\boldsymbol w}}
\newcommand{\bfe}{{\boldsymbol e}}
\newcommand{\bfn}{\boldsymbol n}
\newcommand{\bfx}{\boldsymbol x}
\newcommand{\bfX}{\boldsymbol X}
\newcommand{\bfvarphi}{{\boldsymbol \varphi}}
\newcommand{\bpsi}{{\boldsymbol \psi}}
\newcommand{\bfnabla}{{\boldsymbol \nabla}}
\newcommand{\dgammax}{\ \mathrm{d}\gamma(\bfx)}
\newcommand{\dx}{\ \mathrm{d}\bfx}
\newcommand{\dt}{\ \mathrm{d} t}
\newcommand{\ui}{u_i} \newcommand{\uj}{u_j}
\newcommand{\vi}{v_i} \newcommand{\vj}{v_j}
\newcommand{\wi}{w_i} 
\newcommand{\mesh}{{\mathcal M}}
\newcommand{\edge}{{\sigma}}
\newcommand{\edgeperp}{{\tau}}
\newcommand{\edges}{{\mathcal E}}
\newcommand{\edgesK}{\edges(K)}
\newcommand{\edgesint}{{\mathcal E}_{\mathrm{int}}}
\newcommand{\edgesext}{{\mathcal E}_{\mathrm{ext}}}
\newcommand{\edgesinti}{{\mathcal E}_{\mathrm{int}}^{(i)}}
\newcommand{\edgesexti}{{\mathcal E}_{\mathrm{ext}}^{(i)}}
\newcommand{\edgesi}{{\edges\ei}}
\newcommand{\edgesj}{{\edges\ej}}
\newcommand{\edged}{\epsilon}
\newcommand{\edgesd}{{\widetilde {\edges}}}
\newcommand{\edgesdi}{{\edgesd^{(i)}}}
\newcommand{\edgesdinti}{{\edgesd^{(i)}_{{\rm int}}}}
\newcommand{\edgesdexti}{{\edgesd^{(i)}_{{\rm ext}}}}
\newcommand{\ei}{^{(i)}}
\newcommand{\ej}{^{(j)}}
\newcommand{\nKedge}{\bfn_{K,\edge}}
\newcommand{\deltat}{\delta t}
\newcommand{\Hmesh}{\bfH_\edges}
\newcommand{\Hmeshzero}{\bfH_{\edges,0}}
\newcommand{\Hmeshmzero}{\bfH_{\edges_m,0}}
\newcommand{\Hmeshnzero}{\bfH_{\edges_n,0}}
\newcommand{\Hmeshi}{H_{\edges^{(i)}}}
\newcommand{\Hmeshizero}{H_{\edges^{(i)},0}}
\newcommand{\dive}{{\mathrm{div}}}
\newcommand{\characteristic}{{1 \! \! 1}}
\newcommand{\diam}{{\mathrm{diam}}}
\newcommand{\llbracket}{\bigl[ \hspace{-0.55ex} |}
\newcommand{\rrbracket}{| \hspace{-0.55ex} \bigr]}
\newcommand{\iinud}{i \in \llbracket 1, d \rrbracket}
\newcommand{\nnn}{{n\in\xN}}
\newcommand{\mnn}{{m\in\xN}}
\newcommand{\nti}{{n\to +\infty}}
\newtheorem{theorem}{Theorem}[section]
\newtheorem{lemma}[theorem]{Lemma}
\theoremstyle{definition}
\newtheorem{definition}[theorem]{Definition}
\theoremstyle{remark}
\newtheorem{remark}[theorem]{Remark}
\begin{document}
\title[Convergence of the MAC scheme for the incompressible Navier-Stokes equations]
{Convergence of the Marker-and-Cell scheme for the incompressible Navier-Stokes equations on non-uniform grids}
\author{T. Gallou\"et}
\address{I2M UMR 7373, Aix-Marseille Universit\'e, CNRS, Ecole Centrale de Marseille. 
39 rue Joliot Curie. 13453 Marseille, France. \\ (raphaele.herbin@univ-amu.fr)}
\author{R. Herbin}
\address{I2M UMR 7373, Aix-Marseille Universit\'e, CNRS, Ecole Centrale de Marseille. 
39 rue Joliot Curie. 13453 Marseille, France. \\ (raphaele.herbin@univ-amu.fr)}
\author{J.-C. Latch\'e}
\address{IRSN, BP 13115, St-Paul-lez-Durance Cedex, France (jean-claude.latche@irsn.fr)}
\author{K. Mallem}
\address{I2M UMR 7373, Aix-Marseille Universit\'e, CNRS, Ecole Centrale de Marseille. 
39 rue Joliot Curie. 13453 Marseille, France.\\ (khadidja.mallem@univ-amu.fr)}
\subjclass[2010]{Primary 65M08, 76N15 ; Secondary 65M12, 76N19}
\keywords{Finite-volume methods, MAC scheme, incompressible Navier-Stokes.}
% 65M08: Finite volume methods
% 76N15: Gas dynamics, general
% --
% 65M12: Stability and convergence of numerical methods
% 76N19: Existence, uniqueness and regularity theory
%
%\date{\today}

%
\begin{abstract}
We prove in this paper the convergence of the Marker And Cell (MAC) scheme for the discretization of the steady-state and time-dependent incompressible Navier-Stokes equations in primitive variables, on non-uniform Cartesian grids, without any regularity assumption on the solution. 
{\em A priori} estimates on solutions to the scheme are proven; they yield the existence of discrete solutions and the compactness of sequences of solutions obtained with family of meshes the space step and, for the time-dependent case, the time step of which tend to zero.
We then establish that the limit is a weak solution to the continuous problem. 
\end{abstract}
\maketitle

\hspace{.8cm}{\bf Keywords} {Finite-volume methods, MAC scheme, incompressible Navier-Stokes.}
\\

\begin{center}
\hspace{.8cm}{\it  Communicated by Douglas N. Arnold.}

\end{center}
\tableofcontents
%
%--------------------------------------------------------------------------------------------------------------------------------
%
\section{Introduction}\label{sec1:pbcont}

Let $\Omega$ be an open bounded domain of $\xR^d$ with $d=2$ or $d=3$.
The steady-state incompressible Navier-Stokes equations read:
\begin{subequations} \label{eq:ns}
\begin{align}\label{eq:mass} &
\dive \bar \bfu=0 && \mbox{in } \; \Omega,
\\ \label{eq:qdm} &
-\Delta \bar \bfu+( \bar\bfu\cdot\bfnabla) \bar\bfu+\bfnabla \bar p = \bar \bff && \mbox{in }\ \Omega,
\\ \label{bc} &
\bar \bfu=0 &&\mbox{on } \ \partial\Omega.
\end{align} \end{subequations}
where $\bar\bfu$ stands for the (vector-valued) velocity of the flow, $\bar p$ for the pressure and $\bar \bff$ is a given field of $L^2(\Omega)^d$, and where, for two given vector fields $\bfv= (v_1,\ldots, v_d)$ and $\bfw= (w_1,\ldots, w_d)$, the quantity $(\bfv\cdot\bfnabla) \bfw$ is a vector field whose components are $((\bfv\cdot\bfnabla)\bfw)_i = \sum_{k=1}^d v_k \partial_k \wi$, $\iinud$. 
A weak formulation of Problem \eqref{eq:ns} reads:
\begin{subequations} \label{eq:cont-weak}
\begin{align} \nonumber &
\mbox{Find } (\bar \bfu,\bar p) \in H^1_0(\Omega)^d \times L^2_0(\Omega)
\mbox{ such that, } \forall (\bfv,q) \in H_0^1(\Omega)^d \times L^2_0(\Omega), 
\\ \label{eq:qdm-weak} & \hspace{15ex}
\int_\Omega \bfnabla \bar \bfu: \bfnabla \bfv \dx
+ \int_\Omega ((\bar \bfu\cdot\bfnabla)\bar \bfu)\cdot\bfv \dx -\int_\Omega \bar p\, \dive \bfv \dx
= \int_\Omega \bar \bff \cdot \bfv \dx, 
\\ \label{eq:mass-weak} & \hspace{15ex}
\int_\Omega q \ \dive \bar \bfu \dx =0,
\end{align} \end{subequations}
where $L^2_0(\Omega)$ stands for the subspace of $L^2(\Omega)$ of zero mean-valued functions.

\medskip
The time-dependent Navier-Stokes equations are also considered:
\begin{subequations} \label{eq:ns:ins}
\begin{align} \label{Mass:ins} & 
\dive \bar \bfu =0 && \mbox{ in} \ \Omega \times (0,T),
\\ \label{qdm:ins} &
\partial_t\bar \bfu-\Delta \bar \bfu + (\bar\bfu\cdot\bfnabla)\bar\bfu+\bfnabla \bar p= \bar \bff && \mbox{ in }\ \Omega \times (0,T),
\\ \label{bc:ins} &
\bar \bfu=0 && \mbox{ on }\ \partial\Omega \times (0,T),
\\ \label{initiale} &
\bar \bfu(\bfx,0)=\bfu_0 && \mbox{ in} \ \Omega. 
\end{align} \end{subequations}
This problem is posed for $(\bfx,t)$ in $\Omega \times (0,T)$ where $T\in \xR_{+}^*$; the right-hand side $\bar \bff$ is now a given vector field of $L^2(\Omega \times (0,T))^d$ and the initial datum $\bfu_0$ belongs to the space $\bfE(\Omega)$ of divergence-free functions, defined by:
\[
\bfE(\Omega)=\bigl\{\bfu \in H^1_0(\Omega)^d~;\ \dive \bfu=0 \mbox{ a.e. in } \Omega \bigr\}.
\]
A weak formulation of the transient problem \eqref{eq:ns:ins} reads (see e.g. \cite{boy-06-ele}):
\begin{equation} \label{eq:cont-weak:ins}
\begin{array}{l} \displaystyle
\mbox{Find } \bfu \in L^2(0,T;\bfE(\Omega))\cap L^\infty(0,T;L^2(\Omega)^d)
\mbox{ such that, } \forall \bfv \in L^2(0,T;\bfE(\Omega))\cap C_c^\infty(\Omega \times [0,T)),
\\[1ex] \displaystyle \hspace{7ex}
-\int_0^T \int_\Omega \bar\bfu(\bfx,t)\cdot\partial_t \bfv(\bfx,t) \dx\dt
-\int_\Omega \bfuini (\bfx)\cdot \bfv(\bfx,0) \dx
+\int_0^T \int_\Omega \bfnabla \bar\bfu(\bfx,t): \bfnabla \bfv(\bfx,t) \dx\dt 
\quad \\[1ex] \hfill \displaystyle
+\int_0^T \int_\Omega ((\bar\bfu\cdot\bfnabla)\bar\bfu)(\bfx,t) \cdot \bfv(\bfx,t) \dx\dt 
=\int_0^T \int_\Omega \bar \bff(\bfx,t) \cdot \bfv(\bfx,t) \dx \dt.
\end{array}
\end{equation}

\medskip

The Marker-And-Cell (MAC) scheme, introduced in the middle of the sixties \cite{har-65-num}, is one of the most popular methods \cite{pat-80-num,wes-01-pri} for the approximation of the Navier-Stokes equations in the engineering framework, because of its simplicity, its efficiency and its remarkable mathematical properties.
The aim of this paper is to show, under minimal regularity assumptions on the solution, that sequences of approximate solutions obtained by the discretization of problem \eqref{eq:ns}(resp. \eqref{eq:ns:ins}) by the MAC scheme converge to a solution of \eqref{eq:cont-weak}(resp. \eqref{eq:cont-weak:ins}) as the mesh size (resp. the mesh size and the time step) tends (resp. tend) to 0.

\medskip
For the linear problems, the first error analysis seems to be that of \cite{por-78-err} in the case of the time-dependent Stokes equations on uniform square grids. 
The mathematical analysis of the scheme was performed for the steady-state Stokes equations in \cite{nic-92-ana} for uniform rectangular meshes with $H^2$-regularity assumption on the pressure.
Error estimates for the MAC scheme applied to the Stokes equations have been obtained by viewing the MAC scheme as a mixed finite element method \cite{gir-96-fin,han-98-new} or a divergence conforming DG method \cite{kan-08-div}.
Error estimates for rectangular meshes were also obtained for the related covolume method, see \cite{cho-97-ana} and references therein.
Using the tools that were developed for the finite volume theory \cite{book,eym-07-ana}, an order 1 error estimate for non-uniform meshes was obtained in \cite{bla-99-err}, with order 2 convergence for uniform meshes, under the usual regularity assumptions ($H^2$ for the velocities, $H^1$ for the pressure). 
It was recently shown in \cite{Lij-14-sup} that under higher regularity assumptions ($C^4$ for the velocities and $C^3$ for the pressure) and an additional convergence assumption on the pressure, superconvergence is obtained for non uniform meshes.
Note also that the convergence of the MAC scheme for the Stokes equations with a right-hand side in $H^{-1}(\Omega)$ was proven in \cite{bla-05-con}.

\medskip
Mathematical studies of the MAC scheme for the nonlinear Navier-Stokes equations are scarcer. 
A pioneering work was that of \cite{nic-96-ana} for the steady-state Navier-Stokes equations and for uniform rectangular grids.
More recently, a variant of the MAC scheme was defined on locally refined grids and the convergence proof was performed for both the steady-state and time dependent cases in two or three space dimensions \cite{che-14-ext}.
A MAC-like scheme was also studied for the stationary Stokes and Navier-Stokes equations on two-dimensional Delaunay-Vorono\"{\i} grids \cite{eym-14-tri}.
For the Stokes equations on uniform grids, the scheme given in \cite{che-14-ext} coincides with the usual MAC scheme that is classically used in CFD codes. 
However, for the Navier-Stokes equations, the nonlinear convection term is discretized in \cite{che-14-ext} and \cite{eym-14-tri} in a manner reminiscent of what is sometimes done in the finite element framework (see e.g. \cite{tem-84-nav}), which no longer coincides with the usual MAC scheme, even on uniform rectantular grids; this discretization entails a larger stencil, and numerical experiments \cite{chenier} seem to show that it is not as efficient as the classical MAC scheme.

\medskip
Our purpose here is to analyse the genuine MAC scheme for the steady-state and transient Navier-Stokes equations in primitive variables on a non-uniform rectangular mesh in two or three dimensions, and, as in \cite{che-14-ext}, without any assumption on the data nor on the regularity of the the solutions.
The convergence of a subsequence of approximate solutions to a weak solution of the Navier-Stokes equations is proved for both the steady and unsteady case, which yields as a by product the existence of a weak solution, well known since the work of J. Leray \cite{ler-34-ess}.
In the case where uniqueness of the solution is known, the whole sequence of approximate solutions can be shown to converge, see remarks \ref{rem-uniq-stat} and \ref{rem-uniq-instat}.

\medskip
This paper is organized as follows.
In Section \ref{sec:discop}, the MAC space grid and the discrete operators are introduced.
In particular, the velocity convection operator is approximated so as to be compatible with a discrete continuity equation on the dual cells~; this discretization coincides with the usual discretization on uniform meshes \cite{pat-80-num}, contrary to the scheme of \cite{che-14-ext}.
The MAC scheme for the steady state Navier-Stokes equations and its weak formulation are introduced in Section \ref{sec:steady}.
Velocity and pressure estimates are then obtained, which lead to the compactness of sequences of approximate solutions. 
Any prospective limit is shown to be a weak solution of the continuous problem.
Section \ref{sec:unsteady} is devoted to the unsteady Navier-Stokes equations.
An essential feature of the studied scheme is that the (discrete) kinetic energy remains controlled.
We show the compactness of approximate sequences of solutions thanks to a discrete Aubin-Simon argument, and again conclude that any limit of the approximate velocities is a weak solution of the Navier-Stokes equations, thanks to a passage to the limit in the scheme.
In the case of the unsteady Stokes equations, some additional estimates yield the compactness of sequences of approximate pressures; this entails that the approximate pressure converges to a weak solution of the Stokes equations as the mesh size and time steps tend to 0.
%
%---------------------------------------------------------------------------------------------------------------------------------------------------
%
\section{Space discretization}\label{sec:discop}

Let $\Omega$ be a connected subset of $\xR^d$ consisting in a union of rectangles ($d=2$) or orthogonal parallelepipeds ($d=3$); without loss of generality, the edges (or faces) of these rectangles (or parallelepipeds) are assumed to be orthogonal to the canonical basis vectors, denoted by $(\bfe^{(1)}, \ldots, \bfe^{(d)})$.

\begin{definition}[MAC grid] \label{def:MACgrid}
A discretization of $\Omega$ with a MAC grid, denoted by $\mathcal{D}$, is defined by $\mathcal{D} = (\mesh, \edges)$,
where:
\begin{list}{--}{\itemsep=0.ex \topsep=0.5ex \leftmargin=1.cm \labelwidth=0.7cm \labelsep=0.3cm \itemindent=0.cm}
\item $\mesh$ stands for the primal grid, and consists in a conforming structured partition of $\Omega$ in possibly non uniform rectangles ($d=2$) or rectangular parallelepipeds ($d=3$).
A generic cell of this grid is denoted by $K$, and its mass center by $\bfx_K$.
The pressure is associated to this mesh, and $\mesh$ is also sometimes referred to as "the pressure mesh".

\item The set of all faces of the mesh is denoted by $\edges$; we have $\edges= \edgesint \cup \edgesext$, where $\edgesint$ (resp. $\edgesext$) are the edges of $\edges$ that lie in the interior (resp. on the boundary) of the domain.
The set of faces that are orthogonal to $\bfe\ei$ is denoted by $\edgesi$, for $\iinud$.
We then have $\edgesi= \edgesinti \cup \edgesexti$, where $\edgesinti$ (resp. $\edgesexti$) are the edges of $\edgesi$ that lie in the interior (resp. on the boundary) of the domain.

\medskip
For $\edge\in\edgesint$, we write $\edge = K|L$ if $\edge = \partial K \cap \partial L$.
A dual cell $D_\edge$ associated to a face $\edge \in\edges$ is defined as follows:
\begin{list}{-}{\itemsep=0.ex \topsep=0.ex \leftmargin=1.cm \labelwidth=0.7cm \labelsep=0.1cm \itemindent=0.cm}
\item if $\edge=K|L \in \edgesint$ then $D_\edge = D_{K,\edge}\cup D_{L,\edge}$, where $D_{K,\edge}$ (resp. $D_{L,\edge}$) is the half-part of $K$ (resp. $L$) adjacent to $\edge$ (see Fig. \ref{fig:mesh} for the two-dimensional case); 
\item if $\edge \in \edgesext$ is adjacent to the cell $K$, then $D_\edge=D_{K,\edge}$.
\end{list}
We obtain $d$ partitions of the computational domain $\Omega$ as follows:
\[
\Omega = \cup_{\edge \in \edgesi} D_\edge,\quad \iinud,
\]
and the $i^{th}$ of these partitions is called $i^{th}$ dual mesh, and is associated to the $i^{th}$ velocity component, in a sense which is clarified below.
The set of the faces of the $i^{th}$ dual mesh is denoted by $\edgesdi$ (note that these faces may be orthogonal to any vector of the basis of $\xR^d$ and not only $\bfe\ei$) and is decomposed into the internal and boundary edges: $\edgesdi = \edgesdinti\cup \edgesdexti$.
The dual face separating two duals cells $D_\edge$ and $D_{\edge'}$ is denoted by $\edged=\edge|\edge'$.
\end{list}
\end{definition}

\medskip
To define the scheme, we need some additional notations.
The set of faces of a primal cell $K$ and a dual cell $D_\edge$ are denoted by $\edgesK$ and $\edgesd(D_\edge)$ respectively.
For $\edge \in \edges$, we denote by $\bfx_\edge$ the mass center of $\edge$.
The vector $\bfn_{K,\edge}$ stands for the unit normal vector to $\edge$ outward $K$. 
In some case, we need to specify the orientation of a geometrical quantity with respect to the axis:
\begin{list}{-}{\itemsep=0.ex \topsep=0.5ex \leftmargin=1.cm \labelwidth=0.7cm \labelsep=0.3cm \itemindent=0.cm}
\item a primal cell $K$ will be denoted $K = [\overrightarrow{\edge \edge'}]$ if $\edge, \edge' \in \edgesi \cap \edges(K)$ for some $\iinud$ are such that $(\bfx_{\edge'} - \bfx_\edge) \cdot \bfe\ei >0$;
\item we write $\edge =\overrightarrow{K|L}$ if $\edge \in\edgesi$ and $\overrightarrow{\bfx_K\bfx_L}\cdot \bfe\ei>0$ for some $\iinud$;
\item the dual face $\edged$ separating $D_\edge$ and $D_{\edge'}$ is written $\edged = \overrightarrow{\edge|\edge'}$ if $\overrightarrow{\bfx_\edge \bfx_{\edge'}}\cdot \bfe\ei>0$ for some $\iinud$.
\end{list}
For the definition of the discrete momentum diffusion operator, we associate to any dual face $\edged$ a distance $d_\edged$ as sketched on Figure \ref{fig:mesh}. 
For a dual face $\edged \in \edgesd(D_\edge)$, $\edge \in \edgesi$, $\iinud$, the distance $d_\edged$ is defined by:
\begin{align} \label{distance_duale}
d_\edged=
\begin{cases}
d(\bfx_\edge,\bfx_{\edge'}) & \mbox{if} \ \edged = \edge|\edge' \in \edgesdinti,
\\[1ex]
d(\bfx_\edge,\edged) & \mbox{if} \ \edged \in \edgesdexti,
\end{cases}
\end{align}
where $d(\cdot,\cdot)$ denotes the Euclidean distance in $\xR^d$. 

\begin{figure}[hbt] 
\centering
\begin{tikzpicture}
\fill[green!12!blue!20!white] (0.5,1.5)--(4.5,1.5)--(4.5,4)--(0.5,4)--cycle;
\path node at (0.6,3.5) [anchor= west]{$D_\edge$};

% primal mesh:
\draw[very thin] (0.5,0.5)--(8.,0.5);
\draw[very thin] (0.5,2.5)--(8,2.5);
\draw[very thin] (0.5,5.5)--(8,5.5);
\draw[very thin] (0.5,0.)--(0.5,6.);
\draw[very thin] (4.5,0.)--(4.5,6.);
\draw[very thin] (7.5,0.)--(7.5,6.);
\draw[very thick] (0.5,2.5)--(4.5,2.5);
\draw[very thick] (0.5,5.5)--(4.5,5.5);
\draw[very thick] (4.5,2.5)--(7.5,2.5);

% dual edges:
\draw[very thick, red] (0.5,4)--(4.5,4);
\draw[very thick, green!70!black] (4.5,1.5)--(4.5,4);
\draw[very thick, blue] (0.5,1.5)--(0.5,4);
\draw[very thin] (0.5,1.5)--(4.5,1.5);

\path node at (0.6,5.1) [anchor= west]{$K$};
\path node at (0.6,0.9) [anchor= west]{$L$};
\path node at (3.7,2.7) {$\edge=K|L$};
\path node at (7,2.7) {$\edge''$};
\path node at (2.5,2.5) {$\times$};
\path node at (2.5,5.5) {$\times$};
\path node at (6,2.5) {$\times$};

\path node at (2.5,5.2) {$\bfx_{\edge'}$};
\path node at (2.5,2.2) {$\bfx_\edge$};
\path node at (6.1,2.2) {$\bfx_{\edge''}$};

\path node at (0.5,2.9) [anchor= west]{\textcolor{blue}{$\epsilon_2$}};
\path node at (4.5,2.9) [anchor= west]{\textcolor{green!70!black}{$\epsilon_3$}};
\path node at (3.88,5.3) {$\edge'$};
\path node at (2.5,4.2) {\textcolor{red}{$\epsilon_1=\edge|\edge'$}};
\path node at (0.5,-0.02) [anchor= north]{$\partial\Omega$};
\draw[very thin, green!70!black, <->] (2.5,6.5)--(6,6.5); \path node at (4.25,6.52) [anchor= south]
{\textcolor{green!70!black}{$d_{\epsilon_3}$}};
\draw[very thin, blue, <->] (0.5,6.5)--(2.5,6.5);\path node at (1.5,6.52) [anchor= south]
{\textcolor{blue}{$d_{\epsilon_2}$}};
\draw[very thin, red, <->] (0,2.5)--(0,5.5); \path node at (-0.02,4) [anchor= east]
{\textcolor{red}{$d_{\epsilon_1}$}};
\end{tikzpicture}
\caption{Notations for control volumes and dual cells (in two space dimensions, for the second component of the velocity).}
\label{fig:mesh}
\end{figure}

\medskip
The size $h_\mesh$ and the regularity $\eta_\mesh$ of the mesh are defined by:
\begin{align}\label{sizemesh} &
h_\mesh=\max \bigl\{\diam(K), K\in\mesh \bigr\},
\\ \label{regmesh} & 
\eta_\mesh = \max \Bigl\{ \frac{|\edge|}{|\edge'|},
\ \edge \in \edgesi,\ \edge' \in \edgesj,\ i, j \in \llbracket 1, d\rrbracket,\ i\not= j \Bigr\},
\end{align}
where $|\cdot|$ stands for the $(d-1)$-dimensional measure of a subset of $\xR^{d-1}$ (in the sequel, it is also used to denote the $d$-dimensional measure of a subset of $\xR^d$).

\medskip
The discrete velocity unknowns are associated to the velocity cells and are denoted by $(u_\edge)_{\edge\in\edgesi}$, $\iinud$, while the discrete pressure unknowns are associated to the primal cells and are denoted by $(p_K)_{K\in\mesh}$.
The discrete pressure space $L_\mesh$ is defined as the set of piecewise constant functions over each of the grid cells $K$ of $\mesh$, and the discrete $i^{th}$ velocity space $\Hmeshi$ as the set of piecewise constant functions over each of the grid cells $D_\edge,\ \edge\in\edgesi$.
The set of functions of $L_\mesh$ with zero mean value is denoted by $L_{\mesh,0}$.
As in the continuous case, the Dirichlet boundary conditions are (partly) incorporated into the definition of the velocity spaces, by means of the introduction of the spaces $\Hmeshizero \subset \Hmeshi,\ \iinud$, defined as follows:
\[
\Hmeshizero=\Bigl\{u\in\Hmeshi,\ u(\bfx)=0\ \forall \bfx\in D_\edge,\ \edge \in \edgesexti \Bigr\}.
\]
We then set $\Hmeshzero=\prod_{i=1}^d \Hmeshizero$.
Defining the characteristic function $\characteristic_A$ of any subset $A \subset \Omega$ by $\characteristic_A(\bfx)=1$ if $\bfx \in A$ and $\characteristic_A(\bfx)=0$ otherwise, the $d$ components of a function $\bfu \in \Hmeshzero$ and a function $p \in L_\mesh$ may then be written:
\[
\ui = \sum_{\edge\in \edgesi} u_\edge \characteristic_{D_\edge},\ \iinud \quad \mbox{and} \quad
p = \sum_{K \in \mesh} p_K \characteristic_K.
\]

\medskip
Let us now introduce the discrete operators which are used to write the numerical scheme.

\medskip
{\bf Discrete Laplace operator} --
For $\iinud$, the $i^{th}$ component of the discrete Laplace operator is defined by:
\begin{equation} \label{eq:lap}
\begin{array}{l|l}
-\Delta_\edgesi : \quad
& \quad
\Hmeshizero \longrightarrow \Hmeshizero
\\ & \displaystyle \quad
\ui \longmapsto - \Delta_\edgesi \ui = - \sum_{\edge\in \edgesi} (\Delta u)_\edge \characteristic_{D_\edge},
\mbox{ with } -(\Delta u)_\edge=\frac 1 {|D_\edge|} \sum_{\edged\in\edgesd(D_\edge)} 
\phi_{\edge,\edged}(\ui), \\
& \displaystyle \quad \phi_{\edge,\edged}(\ui)=
\begin{cases} \displaystyle
\frac{|\edged|}{d_\edged} (u_\edge-u_{\edge'}),\text{ if }\ \edged=\edge|\edge' \in \edgesdinti,
\\[2ex] \displaystyle
\frac{|\edged|}{d_\edged} u_\edge,\text{ if } \edged\in\edgesdexti\cap \edgesd(D_\edge),
\end{cases}
\end{array}
\end{equation} 
where $d_\edged$ is defined by \eqref{distance_duale}.
The numerical diffusion flux is conservative:
\begin{equation}
\label{conservdiff}
\phi_{\edge,\edged}(\ui)=-\phi_{\edge',\edged}(\ui),\quad\forall \edged=\edge|\edge'\in\edgesdinti.
\end{equation}
The discrete Laplace operator of the full velocity vector is defined by
\begin{equation}
 \begin{array}{l|l}
-\Delta_\edges:
& 
\Hmeshzero \longrightarrow \Hmeshzero
\\[1ex] &
\bfu \mapsto -\Delta_\edges\bfu = (-\Delta_{\edges^{(1)}} u_1,\ldots, -\Delta_{\edges^{(d)}} u_d).
\end{array}
\end{equation}
Let us now recall the definition of the discrete $H^1_0$-inner product \cite{book}: the $H^1_0$-inner product between $\bfu\in\Hmeshzero$ and $\bfv\in\Hmeshzero$ is obtained by taking, for each dual cell, the inner product of the discrete Laplace operator applied to $\bfu$ by the test function $\bfv$ and integrating over the computational domain.
A simple reordering of the sums (which may be seen as a discrete integration by parts) yields, thanks to the conservativity of the diffusion flux \eqref{conservdiff}:
\begin{multline} \label{ps}
\forall (\bfu, \bfv) \in \Hmeshzero \times \Hmeshzero,\ 
\int_\Omega -\Delta_\edges \bfu \cdot \bfv \dx = [\bfu,\bfv]_{1,\edges,0}=\sum_{i=1}^d [\ui,\vi]_{1,\edgesi,0},
\\
\mbox{with, for }\iinud,\ [\ui,\vi]_{1,\edgesi,0} = 
\sum_{\substack{\edged \in \edgesdinti\\ \edged=\edge|\edge'}}
\frac{|\edged|}{d_\edged}\ (u_\edge-u_{\edge'})\ (v_\edge-v_{\edge'})
+ \sum_{\substack{\edged \in \edgesdexti\\ \edged \in \edgesd(D_\edge)}} \frac{|\edged|}{d_\edged}\ u_\edge\ v_\edge.
\end{multline}
The bilinear forms
\[
\left|\begin{array}{l}
\Hmeshizero \times \Hmeshizero \to \xR
\\[1ex]
(\ui,\vi) \mapsto [\ui,\vi]_{1,\edgesi,0}
\end{array}\right.
\quad \mbox{and} \quad
\left|\begin{array}{l}
\Hmeshzero \times \Hmeshzero \to \xR
\\[1ex]
(\bfu,\bfv) \mapsto [\bfu,\bfv]_{1,\edges,0}
\end{array}\right.
\]
are inner products on $\Hmeshizero$ and $\Hmeshzero$ respectively, which induce the following scalar and vector discrete $H^1_0$ norms:\begin{subequations} \label{norm}
\begin{align} \label{normi} &
\|\ui\|^2_{1,\edgesi,0} = [\ui,\ui]_{1,\edgesi,0}
= \sum_{\substack{\edged \in \edgesdinti \\ \edged=\edge|\edge'}} \frac{|\edged|}{d_\edged}\ (u_\edge-u_{\edge'})^2
+ \sum_{\substack{\edged \in \edgesdexti\\ \edged \in \edgesd(D_\edge)}} \frac{|\edged|}{d_\edged}\ u_\edge^2,
\quad \mbox{for } \iinud,
\\ \label{normfull} & 
\|\bfu\|^2_{1,\edges,0} = [\bfu,\bfu]_{1,\edges,0} = \sum_{i=1}^d \|\ui\|^2_{1,\edgesi,0}.
\end{align}
\end{subequations}
\begin{figure}[tb]
\begin{tikzpicture}[scale=1]
%
% \eth_1 u_1
\draw[-,fill=blue!8] (0.5,6.5)--(3.5,6.5)--(3.5,8.5)--(0.5,8.5); \path(3.,6.7) node[blue!70]{$D_\edged$}; % dualdual cell
\draw[-](0,6.5)--(4,6.5); \draw[-](0,8.5)--(4,8.5); \draw[-](0.5,6)--(0.5,9); \draw[-](3.5,6)--(3.5,9); % primal mesh
\draw[->, very thick](0.1,7.5)--(0.9,7.5); \draw[->, very thick](3.1,7.5)--(3.9,7.5);
\path(0.3,7.7) node{$u_\edge$}; \path(3.3,7.7) node{$u_{\edge'}$};
\draw[-, very thick, blue!50] (2,6.5)--(2,8.5); \path(1.85,7.5) node[blue] {$\edged$}; % dual face
\path(2,5) node[anchor=south]{$(\eth_1 u_1)_{D_\edged} = \dfrac{u_{\edge'} - u_\edge}{d_\edged}$};
%
% no external face
\path(10,8.5) node[anchor=north]{\begin{minipage}{0.5\textwidth}
Note that this definition is still valid if $\edge$ or $\edge'$ are external faces (in which case the corresponding velocity is equal to zero).
The volumes $(D_\edged, \mbox{ for } \edged \in \edgesd^{(1)} \mbox{ and } \edged \mbox{ orthogonal to } \bfe^{(1)})$ thus form a partion of $\Omega$ and the definition is complete.\\
Note also that, in the present case, $D_\edged$ is also a primal cell.
\end{minipage}};
%
% \eth_2 u_1, internal face
\draw[-, blue!8, fill=blue!8] (1.5,1)--(3.25,1)--(3.25,3)--(1.5,3); \path(1.8,2.7) node[blue!70]{$D_\edged$}; % dualdual cell
\draw[-](0,0)--(4.5,0); \draw[-](0,2)--(4.5,2); \draw[-](0,4)--(4.5,4);
\draw[-](0.5,-0.5)--(0.5,4.5); \draw[-](2.5,-0.5)--(2.5,4.5); \draw[-](4,-0.5)--(4,4.5); % primal mesh
\draw[->, very thick](2.1,1)--(2.9,1); \draw[->, very thick](2.1,3)--(2.9,3);
\path(2.3,1.2) node{$u_\edge$}; \path(2.3,3.2) node{$u_{\edge'}$};
\draw[-, very thick, blue!50] (1.5,2)--(3.25,2); \path(2.2,2.15) node[blue] {$\edged$}; % dual face
\path(2.25,-1.2) node{$(\eth_2 u_1)_{D_\edged} = \dfrac{u_{\edge'} - u_\edge}{d_\edged}$};
%
% \eth_2 u_1, external face (up)
\draw[-, blue!8, fill=blue!8] (6.5,1)--(8.25,1)--(8.25,2)--(6.5,2); \path(6.8,1.3) node[blue!70]{$D_\edged$}; % dualdual cell
\draw[-](5,0)--(9.5,0); \draw[-, very thick](5,2)--(9.5,2);
\draw[-](5.5,-0.5)--(5.5,2); \draw[-](7.5,-0.5)--(7.5,2); \draw[-](9,-0.5)--(9,2); % primal mesh
\draw[->, very thick](7.1,1)--(7.9,1); \path(7.3,1.2) node{$u_\edge$};
\draw[-, very thick, blue!50] (6.5,2)--(8.25,2); \path(7.5,2.15) node[blue] {$\edged$}; % dual face
\path(7.25,-1.2) node{$(\eth_2 u_1)_{D_\edged} = \dfrac{- u_\edge}{d_\edged}$};
%
% \eth_2 u_1, external face (down)
\draw[-, blue!8, fill=blue!8] (11.5,0)--(13.25,0)--(13.25,1)--(11.5,1); \path(11.8,0.7) node[blue!70]{$D_\edged$}; % dualdual cell
\draw[-, very thick](10,0)--(14.5,0); \draw[-](10,2)--(14.5,2);
\draw[-](10.5,0)--(10.5,2.5); \draw[-](12.5,0)--(12.5,2.5); \draw[-](14,0)--(14,2.5); % primal mesh
\draw[->, very thick](12.1,1)--(12.9,1); \path(12.3,1.2) node{$u_\edge$};
\draw[-, very thick, blue!50] (11.5,0)--(13.25,0); \path(12.5,-0.15) node[blue] {$\edged$}; % dual face
\path(12.25,-1.2) node{$(\eth_2 u_1)_{D_\edged} = \dfrac{u_\edge}{d_\edged}$};
\end{tikzpicture}
\caption{Notations for the definition of the partial space derivatives of the first component of the velocity, in two space dimensions.}
\label{fig:gradient} \end{figure}
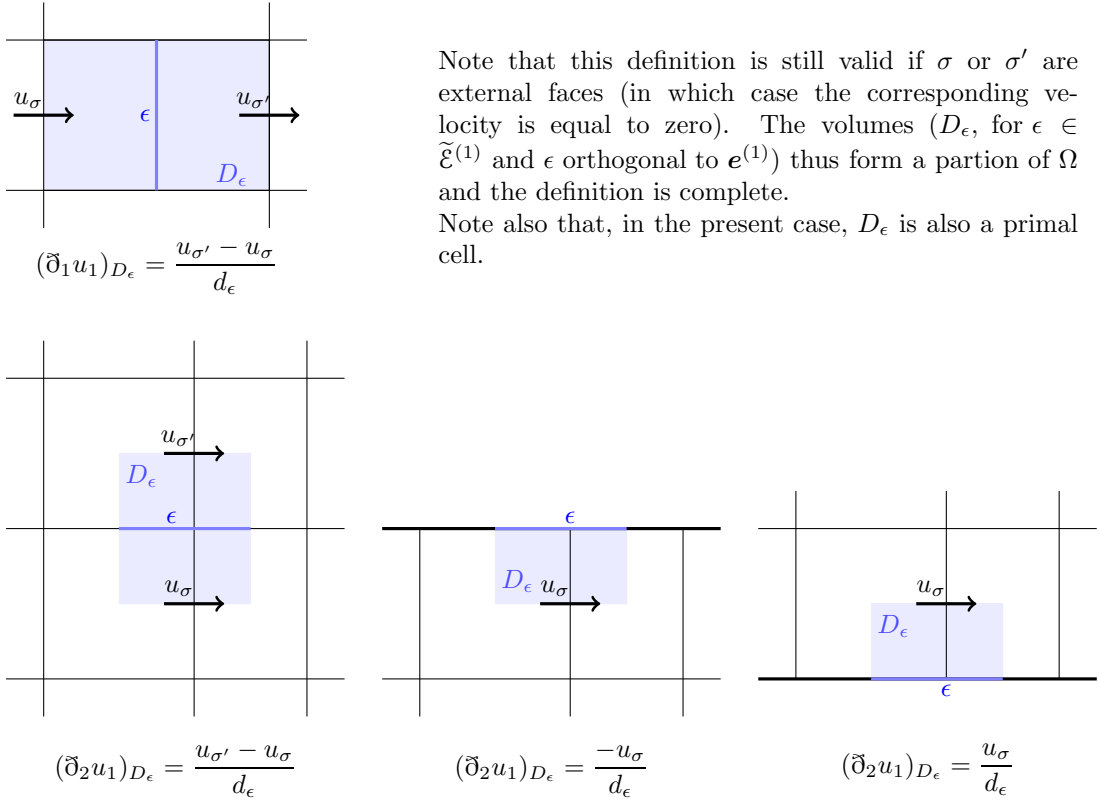
This inner product may also be formulated as the $L^2$-inner product of discrete gradients.
To this purpose, we introduce $d \times d$ new partitions of the domain $\Omega$, where the $(i,j)^{th}$ partition consists in an union of rectangles ($d=2$) or orthogonal parallelepipeds ($d=3$) associated to the dual faces orthogonal to $\bfe\ej$ of the dual mesh $\edgesdi$ for the $i^{th}$ component of the velocity.
This $(i,j)^{th}$ partition reads:
\[
(D_\edged)_{\edged \in \edgesdi, \edged \perp \bfe\ej},\mbox{ with }
\left|\begin{array}{ll}
D_\edged = \edged \times [\bfx_\edge\, \bfx_{\edge'}]
&
\mbox{ if } \edged \mbox{ lies inside } \Omega,\ \edged = \edge|\edge',
\\[1ex]
D_\edged = \edged \times [\bfx_\edge\, \bfx_{\edge,\edged}]
&
\mbox{ if } \edged \mbox{ lies on } \partial \Omega,\ \edged \in \edgesd(D_\edge),
\end{array}\right.
\]
where $\bfx_{\edge,\edged}$ is defined as the orthogonal projection of $\bfx_\edge$ on $\edged$ (which is also, in two space dimensions, the vertex of $\edge$ lying on $\edged$).
The discrete derivative $\eth_j \ui$ is defined on the $(i,j)^{th}$ partition and reads:
\begin{equation} \label{eq:partial-v}
\begin{array}{lll}
\mbox{ if } \edged \mbox{ lies inside } \Omega,\ \edged = \overrightarrow{\edge|\edge'},
\quad &
(\eth_j \ui)_{D_\edged} = \dfrac{u_{\edge'} - u_\edge}{d_\edged},
\\
\mbox{ if } \edged \mbox{ lies on } \partial \Omega,\ \edged \in \edgesd(D_\edge),
\quad &
(\eth_j \ui)_{D_\edged} = \dfrac{- u_\edge}{d_\edged}\ \overrightarrow{\bfx_\edge\bfx_{\edge,\edged}} \cdot \bfe\ej,
\end{array}
\end{equation}
with $d_\edged$ defined by \eqref{distance_duale}.
These definitions are illustrated on Figure \ref{fig:gradient}.
Note that some of these partitions are the same: the $(i,j)^{th}$ partition coincide with the $(j,i)^{th}$ and $(i,i)^{th}$ partitions are the same for $\iinud$.
In addition, these latters also coincide with the primal mesh: for any sub-volume $D_\edged$ of such a partition, there is $K\in\mesh$ such that $D_\edged=K$, and we may thus write equivalently $(\eth_i \ui)_{D_\edged}$ or $(\eth_i \ui)_K$.
We choose this latter notation in the definition of the discrete divergence below for the sake of consistency, since, if we adopt a variational point of view for the description of the scheme, the discrete velocity divergence has to belong (and indeed does belong) to the space of discrete pressures (see Sections \ref{sec:steady} and \ref{sec:unsteady} below for a varitional form of the scheme, in the steady and time-dependent case, respectively).
The discrete discrete gradient of each velocity component $\ui$ may now be defined as:
\begin{equation}
\bfnabla_{\edgesdi} \ui = (\eth_1 \ui, \ldots, \eth_d \ui) \mbox{ with }
\eth_j \ui = \sum_{\substack{\edged \in \edgesdi \\ \edged \perp \bfe\ej}} (\eth_j \ui)_{D_\edged}\ \characteristic_{D_\edged}.
\end{equation}
With this definition, it is easily seen that 
\begin{equation}\label{gradient-and-innerproduct}
\int_\Omega \bfnabla_\edgesdi \psi \cdot \bfnabla_\edgesdi \chi \dx = [\psi,\chi]_{1,\edgesi,0}, \mbox{ for } \psi, \chi \in \Hmeshizero,
\mbox{ and }\iinud.
\end{equation}
If we extend this definition to the velocity vector by 
\[
\bfnabla_\edgesd \bfu= (\bfnabla_{\edgesd^{(1)}} u_1, \ldots, \bfnabla_{\edgesd^{(d)}} u_d),
\]
we get
\[
\int_\Omega \bfnabla_\edgesd \bfu : \bfnabla_\edgesd \bfv \dx = [\bfu,\bfv]_{1,\edges,0}.
\]
This operator satisfies the following consistency result.

\begin{lemma}[Consistency of the discrete partial derivatives of the velocity] \label{lem:grad_v_cons}
Let $\Pi_\edges$ be an interpolation operator from $C_c^\infty(\Omega)^d$ to $\Hmeshzero$ such that, for any $\bfvarphi=(\varphi_1,\cdots,\varphi_d) \in C_c^\infty(\Omega)^d$, there exists $C_\bfvarphi \ge 0$ depending only on $\bfvarphi$ such that
\begin{multline}\label{piedges}
\Pi_\edges \bfvarphi=(\Pi_{\edges^{(1)}} \varphi_1, \cdots, \Pi_{\edges^{(d)}} \varphi_d)
\in H_{\edges^{(1)},0} \times \cdots \times H_{\edges^{(d)},0},\mbox{ where }
\\
\bigl|(\Pi_\edgesi \varphi_i)_\edge - \varphi_i(\bfx_\edge)\bigr| \le C_{\bfvarphi}\ h_\mesh^2,
\mbox{ for } \edge\in\edgesi,\ \iinud.
\end{multline}
Let $\eta_\mesh$ be the parameter measuring the regularity of the mesh defined by \eqref{sizemesh}.
Then there exists $C_{\bfvarphi,\eta_\mesh} \ge 0$, only depending in a non-decreasing way on $\eta_\mesh$, such that 
\[
|\eth_j \Pi_\edgesi \varphi_i (\bfx) - \partial_j \varphi_i (\bfx)| \le C_{\bfvarphi,\eta}\ h_\mesh \mbox{ for a.e. }\bfx \in \Omega
\mbox{ and for } i,j \in \llbracket 1, d \rrbracket.
\]
As a consequence, if $ (\mesh_m,\edges_m)_{m\in\xN}$ is a sequence of MAC grids whose regularity is bounded and whose size tends to 0 as $m$ tends to $+\infty$, then $\bfnabla_{\edgesd_m}\,(\Pi_{\edges_m} \bfvarphi) \to \bfnabla \bfvarphi$ uniformly as $m \to +\infty$.
\end{lemma}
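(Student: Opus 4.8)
The plan is to estimate $\eth_j\Pi_\edgesi\varphi_i - \partial_j\varphi_i$ separately on each sub-volume $D_\edged$ of the $(i,j)$-th partition, writing the discrete derivative there as an exact difference quotient of $\varphi_i$ plus a remainder coming from the interpolation error. Fix $i,j\in\llbracket 1,d\rrbracket$ and a sub-volume $D_\edged$ with, say, $\edged=\overrightarrow{\edge|\edge'}\in\edgesdinti$. By construction of the $(i,j)$-th partition the segment $[\bfx_\edge\,\bfx_{\edge'}]$ is parallel to $\bfe\ej$ and has length $d_\edged$, so the one-dimensional mean value theorem gives $\bigl(\varphi_i(\bfx_{\edge'})-\varphi_i(\bfx_\edge)\bigr)/d_\edged=\partial_j\varphi_i(\xi)$ for some $\xi\in\overline{D_\edged}$; since $\partial_j\varphi_i$ is Lipschitz on $\Omega$ and $\diam(D_\edged)\le C_d\,h_\mesh$ (as $d_\edged\le h_\mesh$ and the remaining dimensions of $D_\edged$ are bounded by primal edge-lengths), this quantity differs from $\partial_j\varphi_i(\bfx)$ by at most $C(\bfvarphi)\,h_\mesh$ for every $\bfx\in D_\edged$. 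On the other hand,
\[
(\eth_j\Pi_\edgesi\varphi_i)_{D_\edged}-\frac{\varphi_i(\bfx_{\edge'})-\varphi_i(\bfx_\edge)}{d_\edged}
=\frac{\bigl[(\Pi_\edgesi\varphi_i)_{\edge'}-\varphi_i(\bfx_{\edge'})\bigr]-\bigl[(\Pi_\edgesi\varphi_i)_\edge-\varphi_i(\bfx_\edge)\bigr]}{d_\edged},
\]
whose absolute value is at most $2\,C_\bfvarphi\,h_\mesh^2/d_\edged$ by \eqref{piedges}. The boundary sub-volumes $\edged\in\edgesdexti$ are handled in exactly the same way, using in addition that $\varphi_i(\bfx_{\edge,\edged})=0$ because $\bfx_{\edge,\edged}\in\partial\Omega$ and $\bfvarphi\in C_c^\infty(\Omega)^d$, so that the discrete derivative there is, up to the interpolation error $(\Pi_\edgesi\varphi_i)_\edge-\varphi_i(\bfx_\edge)$ divided by $\pm d_\edged$, the signed difference quotient of $\varphi_i$ between $\bfx_\edge$ and $\bfx_{\edge,\edged}$ along $\bfe\ej$.

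It remains to absorb the factor $h_\mesh^2/d_\edged$, and this is the only place where the mesh regularity is used: the claim is that $d_\edged\ge h_\mesh/C_1(\eta_\mesh)$ for every dual face $\edged$, with $C_1$ depending on $\eta_\mesh$ only in a non-decreasing way. Indeed, a face of a primal cell orthogonal to $\bfe\ek$ has measure equal to the product of the edge-lengths of that cell in the directions different from $k$, so the bound \eqref{regmesh} on the ratio of the measures of any two primal faces orthogonal to distinct basis vectors forces all edge-lengths of a given primal cell — and, since \eqref{regmesh} ranges over all such pairs of faces in the whole mesh, all edge-lengths of any two primal cells — to be comparable up to a fixed power of $\eta_\mesh$ depending only on $d$. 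Picking the cell $K$ with $\diam(K)=h_\mesh$, at least one of its edge-lengths is $\ge h_\mesh/\sqrt d$, hence every edge-length of every primal cell is $\ge h_\mesh/C_1(\eta_\mesh)$; since in each case of \eqref{distance_duale} the distance $d_\edged$ is at least half such an edge-length, the claim follows. Therefore $h_\mesh^2/d_\edged\le C_1(\eta_\mesh)\,h_\mesh$, and combining with the first paragraph via the triangle inequality proves the pointwise estimate with $C_{\bfvarphi,\eta}=C(\bfvarphi)+2\,C_\bfvarphi\,C_1(\eta_\mesh)$, which is non-decreasing in $\eta_\mesh$.

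Finally, for a sequence $(\mesh_m,\edges_m)_m$ with $\eta_{\mesh_m}\le\eta^*$ and $h_{\mesh_m}\to0$, monotonicity of $C_{\bfvarphi,\cdot}$ gives $\|\eth_j\Pi_{\edges_m^{(i)}}\varphi_i-\partial_j\varphi_i\|_{L^\infty(\Omega)}\le C_{\bfvarphi,\eta^*}\,h_{\mesh_m}\to0$ for all $i,j\in\llbracket 1,d\rrbracket$; since the entries of $\bfnabla_{\edgesd_m}(\Pi_{\edges_m}\bfvarphi)$ and of $\bfnabla\bfvarphi$ are precisely the $\eth_j\Pi_{\edges_m^{(i)}}\varphi_i$ and the $\partial_j\varphi_i$, this is the announced uniform convergence. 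The \emph{main obstacle} is the geometric lower bound on $d_\edged$: it is exactly what keeps the interpolation-error contribution $h_\mesh^2/d_\edged$ from blowing up, and it is the sole reason the regularity parameter $\eta_\mesh$ appears in the statement; everything else is a routine mean value computation.
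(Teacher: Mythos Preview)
The paper states this lemma without proof, so there is no argument to compare against. Your proof is correct and supplies exactly the computation the authors omit: a Taylor/mean-value estimate for the exact difference quotient, combined with the interpolation error divided by $d_\edged$, and a geometric lower bound $d_\edged \ge c(\eta_\mesh)\,h_\mesh$ derived from the regularity parameter \eqref{regmesh}. Your treatment of the boundary dual faces via $\varphi_i(\bfx_{\edge,\edged})=0$ is the right way to close the case distinction, and your observation that the $\eta_\mesh$-dependence enters solely through the lower bound on $d_\edged$ is accurate. One very minor remark: when you write $\diam(D_\edged)\le C_d\,h_\mesh$, you implicitly use that each edge-length of a primal cell is at most its diameter, hence at most $h_\mesh$; this is immediate but worth making explicit, since the dimensions of $D_\edged$ transverse to $\bfe\ej$ are controlled by primal edge-lengths rather than by $d_\edged$ itself.
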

%
% -------------------------------------------------------------------
%
\bigskip 
{\bf Discrete divergence and gradient operators} --
The discrete divergence operator $\dive_\mesh$ is defined by:
\begin{align} \label{eq:div} &
\begin{array}{l| l} \displaystyle
\dive_\mesh:
\quad & \quad
\Hmeshzero \longrightarrow L_{\mesh,0}
\\[1ex] & \displaystyle \quad
\bfu \longmapsto \dive_\mesh\, \bfu = \sum_{K\in\mesh} \frac 1 {|K|} \sum_{\edge\in\edges(K)} |\edge|\ u_{K,\edge} \ \characteristic_K,
\end{array}
\\ & \mbox{with } u_{K,\edge} = u_\edge \nKedge \cdot \bfe\ei \mbox{ for } \edge \in \edgesi \cap \edgesK,\ \iinud.
\end{align}
Note that the numerical flux is conservative, \ie
\begin{equation} \label{conservativite}
u_{K,\edge} =-u_{L,\edge} ,\quad\forall \edge=K|L\in\edgesint .
\end{equation}
We can now define the discrete divergence-free velocity space:
\[
\bfE_\edges(\Omega)= \bigl\{\bfu\in\Hmeshzero~;~\dive_\mesh\, \bfu=0 \bigr\}.
\] 
The discrete divergence of $\bfu = (u_1, \ldots, u_d) \in \Hmeshzero$ may also be written as
\[
\dive_\mesh\, \bfu = \sum_{i=1}^d (\eth_i \ui)_K \characteristic_K,
\]
where the discrete derivative $(\eth_i \ui)_K$ is defined by Relation \eqref{eq:partial-v}.

\medskip
The gradient (which applies to the pressure) in the discrete momentum balance equation is built as the dual operator of the discrete divergence, and reads:
\begin{equation}\label{eq:grad}
\begin{array}{l|l}
\bfnabla_\edges:\quad
& \quad
L_\mesh \longrightarrow \Hmeshzero 
\\[1ex] & \displaystyle \quad
p \longmapsto \bfnabla_\edges p = (\eth_{1} p, \ldots, \eth_d p), 
\end{array}
\end{equation}
where $\eth_i p \in \Hmeshizero$ is the discrete derivative of $p$ in the $i$-th direction, defined by: 
\begin{equation} \label{discderive}
\eth_i p(\bfx) = \frac{|\edge|}{|D_\edge|}\ (p_L - p_K)\, \quad \forall \bfx\in D_\edge,
\mbox{ for } \edge=\overrightarrow{K|L} \in \edgesinti, \ \iinud.
\end{equation}
Note that, in fact, the discrete gradient of a function of $L_\mesh$ should only be defined on the internal faces, and does not need to be defined on the external faces; it is chosen to be in $\Hmeshzero$ (that is zero on the external faces) for the sake of simplicity. 
Again, the definition of the discrete derivatives of the pressure on the MAC grid is consistent in the sense made precise in the following lemma.

\begin{lemma}[Discrete gradient consistency]\label{lem:consgrad}
Let $\Pi_\mesh $ be an interpolation operator from $C_c^\infty(\Omega)$ to $L_\mesh$ such that, for any $\psi \in C_c^\infty(\Omega)$, there exists $C_\psi\ge 0$ depending only on $\psi$ such that
\begin{equation} \label{pimesh}
|(\Pi_\mesh \psi)_K - \psi(\bfx_K)| \le C_\psi\ h_\mesh^2, \mbox{ for } K \in \mesh.
\end{equation}
then there exists $C_{\psi,\eta_\mesh}\ge 0$ depending only on $\psi$ and, in a non-decreasing way, on $\eta_\mesh$, such that
\[
|\eth_i \Pi_\mesh \psi (\bfx) - \partial_i \psi(\bfx)| \le C_{\psi,\eta}\ h_\mesh, \mbox{ for a.e. }\bfx \in \Omega
\mbox{ and for } \iinud.
\]
\end{lemma}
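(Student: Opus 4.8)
\emph{Proof proposal.}
The first step is to turn $\eth_i$ into a plain finite difference. Fix $\iinud$ and let $\edge = \overrightarrow{K|L} \in \edgesinti$. Since the grid is structured and Cartesian, $K$ and $L$ share the whole face $\edge$ and $\bfx_L - \bfx_K$ is a positive multiple of $\bfe\ei$; moreover the dual cell $D_\edge = D_{K,\edge} \cup D_{L,\edge}$ is a rectangular box whose base is $\edge$ and whose extent in the $i$-th direction is $d(\bfx_K,\edge) + d(\bfx_L,\edge) = d(\bfx_K,\bfx_L)$, so that $|D_\edge| = |\edge|\, d(\bfx_K,\bfx_L)$. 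Plugging this into \eqref{discderive} gives, with $p := \Pi_\mesh \psi$,
\begin{equation*}
\eth_i \Pi_\mesh\psi(\bfx) = \frac{p_L - p_K}{d(\bfx_K,\bfx_L)} \qquad \text{for a.e. } \bfx \in D_\edge,\quad \edge = \overrightarrow{K|L} \in \edgesinti .
\end{equation*}
I would then split $p_L - p_K = \bigl(\psi(\bfx_L) - \psi(\bfx_K)\bigr) + \bigl(p_L - \psi(\bfx_L)\bigr) - \bigl(p_K - \psi(\bfx_K)\bigr)$ and estimate the ``consistency'' and ``interpolation error'' contributions to $\eth_i\Pi_\mesh\psi(\bfx) - \partial_i\psi(\bfx)$ separately.

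For the consistency part, $\bfx_L - \bfx_K = d(\bfx_K,\bfx_L)\,\bfe\ei$, so Taylor's formula with integral remainder gives
\begin{equation*}
\frac{\psi(\bfx_L)-\psi(\bfx_K)}{d(\bfx_K,\bfx_L)} = \int_0^1 \partial_i\psi\bigl(\bfx_K + t(\bfx_L-\bfx_K)\bigr)\,\mathrm{d}t ,
\end{equation*}
that is, the mean value of $\partial_i\psi$ over the segment $[\bfx_K,\bfx_L]$, which lies in $\overline{D_\edge}$. Hence for a.e. $\bfx \in D_\edge$ its distance to $\partial_i\psi(\bfx)$ is at most $\|D^2\psi\|_{L^\infty(\Omega)}\,\diam(D_\edge) \le 2\,\|D^2\psi\|_{L^\infty(\Omega)}\,h_\mesh$; no mesh regularity is used here.

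The parameter $\eta_\mesh$ enters only through the interpolation part, which by \eqref{pimesh} is bounded by $2C_\psi h_\mesh^2 / d(\bfx_K,\bfx_L)$. This is $O(h_\mesh)$ as soon as $d(\bfx_K,\bfx_L) = \tfrac12\bigl(h_K\ei + h_L\ei\bigr) \ge h_\mesh / C$, where $h_K\ei$ denotes the length of $K$ in the $i$-th direction and $C$ depends only on $d$ and, non-decreasingly, on $\eta_\mesh$. Establishing this lower bound is the crux of the lemma: it amounts to showing that a bound on $\eta_\mesh$, defined by \eqref{regmesh}, forces the Cartesian mesh to be quasi-uniform. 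I would obtain it in two steps. First, comparing two faces of a \emph{single} cell that are orthogonal to two different basis vectors shows that the aspect ratio of every cell is at most $\eta_\mesh$. Then, comparing a face of $K$ orthogonal to $\bfe\ei$ with a face of $K'$ orthogonal to $\bfe\ej$ with $i\ne j$, and using the aspect-ratio bound just obtained, shows that the edge lengths of any two cells, in any two directions, are comparable up to a fixed power of $\eta_\mesh$; in particular $h_\mesh \le C(\eta_\mesh,d)\, h_K\ei$ for every $K \in \mesh$ and every $\iinud$. This bounds the interpolation part by $2C_\psi\,C(\eta_\mesh,d)\,h_\mesh$.

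Adding the two estimates yields $|\eth_i\Pi_\mesh\psi(\bfx) - \partial_i\psi(\bfx)| \le C_{\psi,\eta}\,h_\mesh$ on every interior dual cell, with $C_{\psi,\eta}$ built from $C_\psi$, $\|D^2\psi\|_{L^\infty(\Omega)}$ and the non-decreasing function $\eta_\mesh \mapsto C(\eta_\mesh,d)$ --- hence of the announced form. It only remains to treat the dual cells $D_\edge$ attached to external faces $\edge \in \edgesexti$, on which $\eth_i\Pi_\mesh\psi \equiv 0$ by the convention adopted just after \eqref{discderive}: since $\psi$ has compact support in $\Omega$, such a cell does not meet $\mathrm{supp}\,\psi$ once $h_\mesh < \delta := \mathrm{dist}(\mathrm{supp}\,\psi,\partial\Omega)$, so $\partial_i\psi$ vanishes there too, and when $h_\mesh \ge \delta$ the crude bound $|\partial_i\psi(\bfx)| \le \|\bfnabla\psi\|_{L^\infty(\Omega)} \le (\|\bfnabla\psi\|_{L^\infty(\Omega)}/\delta)\,h_\mesh$ suffices after enlarging $C_{\psi,\eta}$ by this $\psi$-dependent amount. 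I expect the quasi-uniformity argument of the third paragraph to be the only step that requires genuine care; everything else is a routine Taylor estimate.
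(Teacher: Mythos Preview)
The paper states this lemma without proof (it is one of two elementary consistency results, Lemmas~\ref{lem:grad_v_cons} and~\ref{lem:consgrad}, that are simply asserted), so there is nothing to compare against. Your argument is correct and is exactly the computation the authors have in mind: rewrite $\eth_i$ as a centred difference $(p_L-p_K)/d(\bfx_K,\bfx_L)$ using $|D_\edge|=|\edge|\,d(\bfx_K,\bfx_L)$, then split into a Taylor part and an interpolation-error part.

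Two small remarks. First, your quasi-uniformity step is indeed where $\eta_\mesh$ enters and your two-stage argument is right; in particular, in $3$D the within-cell comparison works because the ratio of two face areas of the same cell equals the ratio of two edge lengths (e.g.\ $|bc|/|ac|=b/a$), so the aspect ratio of every cell is at most $\eta_\mesh$, and the cross-cell comparison then gives $h_K\ei \ge h_\mesh/C(\eta_\mesh,d)$ with $C$ a fixed power of $\eta_\mesh$. Second, your treatment of the boundary dual cells is fine, but note that you could also avoid the case split on $h_\mesh \gtrless \delta$: once $h_\mesh<\delta$ the support of $\psi$ misses every $D_\edge$ with $\edge\in\edgesexti$, and for the finitely many ``coarse'' meshes with $h_\mesh\ge\delta$ the statement is vacuous up to enlarging the constant, exactly as you say.
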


\begin{lemma}[Discrete $\dive-\bfnabla$ duality]
\label{lem:duality}
Let $\ q\in L_\mesh$ and $\bfv\in\Hmeshzero$ then:
\begin{equation}
\int_\Omega q \ \dive_\mesh\,\bfv \dx +\int_\Omega \bfnabla_\edges q\cdot \bfv \dx =0 \label{Ndiscret}.
\end{equation}
\end{lemma}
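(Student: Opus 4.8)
The plan is to prove \eqref{Ndiscret} by expanding both integrals in terms of the degrees of freedom $(q_K)_{K\in\mesh}$ and $(v_\edge)_{\edge\in\edgesi}$, and then observing that the two face-indexed sums that result are opposite. This is the discrete counterpart of the integration-by-parts identity $\int_\Omega q\,\dive \bfv \dx + \int_\Omega \bfnabla q\cdot\bfv \dx = \int_{\partial\Omega} q\,(\bfv\cdot\bfn)\dgammax$, the boundary term vanishing here because $\bfv\in\Hmeshzero$ is built to be zero on external faces.

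First I would treat the first term. Since $q$ is constant equal to $q_K$ on $K$ and, by \eqref{eq:div}, $\dive_\mesh\bfv$ is constant equal to $\frac 1{|K|}\sum_{\edge\in\edgesK}|\edge|\,v_{K,\edge}$ on $K$,
\[
\int_\Omega q\,\dive_\mesh\bfv \dx = \sum_{K\in\mesh} q_K \sum_{\edge\in\edgesK} |\edge|\,v_{K,\edge}.
\]
Reordering this double sum as a sum over faces, each interior face $\edge=K|L$ contributes $|\edge|\,(q_K v_{K,\edge} + q_L v_{L,\edge}) = |\edge|\,v_{K,\edge}\,(q_K - q_L)$ by the conservativity property \eqref{conservativite}, while each boundary face contributes nothing since $v_\edge = 0$ there by definition of $\Hmeshzero$. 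Writing $\edge=\overrightarrow{K|L}$ for $\edge\in\edgesinti$, so that $\nKedge\cdot\bfe\ei=1$ and hence $v_{K,\edge}=v_\edge$, this gives
\[
\int_\Omega q\,\dive_\mesh\bfv \dx = \sum_{i=1}^d\ \sum_{\edge=\overrightarrow{K|L}\in\edgesinti} |\edge|\,v_\edge\,(q_K - q_L).
\]

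Next I would treat the second term. By \eqref{eq:grad}--\eqref{discderive}, $\bfnabla_\edges q = (\eth_1 q,\ldots,\eth_d q)$ with $\eth_i q$ constant equal to $\frac{|\edge|}{|D_\edge|}(q_L - q_K)$ on $D_\edge$ for $\edge=\overrightarrow{K|L}\in\edgesinti$ and equal to zero on the dual cells adjacent to boundary faces; since $v_i$ is constant equal to $v_\edge$ on $D_\edge$, the factor $|D_\edge|$ cancels and
\[
\int_\Omega \bfnabla_\edges q\cdot\bfv \dx = \sum_{i=1}^d\ \sum_{\edge=\overrightarrow{K|L}\in\edgesinti} |\edge|\,(q_L - q_K)\,v_\edge.
\]
Adding the last two displays, the face-by-face contributions cancel and \eqref{Ndiscret} follows. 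I do not expect a genuine obstacle: the only points requiring care are bookkeeping ones, namely correctly converting the cell-wise sum in the first term into a face-wise sum — which is exactly where \eqref{conservativite} and the homogeneous boundary condition encoded in $\Hmeshzero$ are used — and tracking the orientation conventions $\edge=\overrightarrow{K|L}$ and $\nKedge\cdot\bfe\ei=1$ so that the two sums carry genuinely opposite signs.
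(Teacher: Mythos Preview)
Your proof is correct and follows essentially the same approach as the paper: expand the divergence integral as a cell sum, reorder it as a face sum using the conservativity relation \eqref{conservativite} (with boundary contributions vanishing because $\bfv\in\Hmeshzero$), and then match it against the gradient integral computed from \eqref{discderive}. The paper's version is slightly more compact in that it transforms the first integral directly into minus the second rather than expanding both and adding, but the argument is the same.
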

\begin{proof}
Let $\ q\in L_\mesh$ and $\bfv\in\Hmeshzero$. 
By the definition \eqref{eq:div} of the discrete divergence operator and thanks to the conservativity \eqref{conservativite} of the flux:
\[
\int_\Omega q \ \dive_\mesh\,\bfv \dx
=\sum_{K\in\mesh} q_K \sum_{\edge \in \edges(K)} |\edge|\ v_{K,\edge}
=\sum_{\edge\in\edgesint, \edge=K|L} |\edge|\ (q_K - q_L)\, v_{K,\edge}.
\]
Therefore, by the definition \eqref{discderive} of the discrete derivative of $q$, 
\[
 \int_\Omega q \ \dive_\mesh\,\bfv \dx 
 =-\sum_{i=1}^d\ \sum_{\edge\in\edgesi} |D_\edge|\ v_\edge\, \eth_i q 
 =-\int_\Omega \bfnabla_\edges q \cdot \bfv \dx,
\]
which concludes the proof.
\end{proof}
%
%
% -------------------------------------------------------------------
%
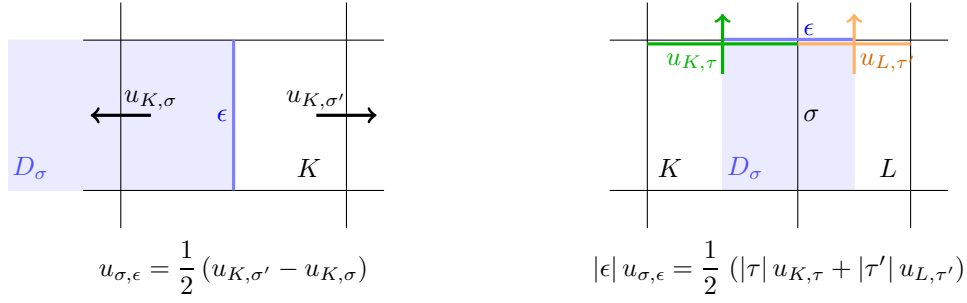
\begin{figure}[tb]
\begin{tikzpicture}[scale=1]
%
% u_1, dual faces orthogonal to e^(1)
\draw[-, blue!8, fill=blue!8] (-1,0.5)--(2,0.5)--(2,2.5)--(-1,2.5); \path(-0.7,0.8) node[blue!70]{$D_\edge$}; % dual cell
\path(3.,0.8) node{$K$};
\draw[-](0,0.5)--(4,0.5); \draw[-](0,2.5)--(4,2.5); \draw[-](0.5,0)--(0.5,3); \draw[-](3.5,0)--(3.5,3); % primal mesh
\draw[<-, very thick](0.1,1.5)--(0.9,1.5); \draw[->, very thick](3.1,1.5)--(3.9,1.5);
\path(0.9,1.7) node{$u_{K,\edge}$}; \path(3.1,1.7) node{$u_{K,\edge'}$};
\draw[-, very thick, blue!50] (2,0.5)--(2,2.5); \path(1.85,1.5) node[blue] {$\edged$}; % dual face
\path(2,-1) node[anchor=south]{$u_{\edge,\edged} = \dfrac 1 2 \,(u_{K,\edge'} - u_{K,\edge})$};

%
% u_1, dual faces orthogonal to e^(2)
\draw[-, blue!8, fill=blue!8] (8.5,0.5)--(10.25,0.5)--(10.25,2.5)--(8.5,2.5); \path(8.8,0.8) node[blue!70]{$D_\edge$};
\path(7.8,0.8) node{$K$}; \path(10.7,0.8) node{$L$};
\draw[-](7,0.5)--(11.5,0.5); \draw[-](7,2.5)--(11.5,2.5);
\draw[-](7.5,0)--(7.5,3); \draw[-](9.5,0)--(9.5,3); \draw[-](11,0)--(11,3); % primal mesh
\path(9.45,1.5) node[anchor=west]{$\edge$};
\draw[-, very thick, blue!50] (8.5,2.51)--(10.25,2.51); \path(9.45,2.65) node[blue, anchor=west] {$\edged$}; % dual face
\draw[-, very thick, green!70!black] (7.5,2.45)--(9.5,2.45); % primal face of K
\draw[-, very thick, orange!60] (9.5,2.45)--(11,2.45); % primal face of L
\draw[->, very thick, green!70!black](8.5,2.05)--(8.5,2.85); \draw[->, very thick,orange!60](10.25,2.05)--(10.25,2.85);
\path(8.1,2.2) node[green!70!black]{$u_{K,\edgeperp}$}; \path(10.7,2.2) node[orange!80!black]{$u_{L,\edgeperp'}$};
\path(9.25,-1) node[anchor=south]{$|\edged|\, u_{\edge,\edged} = \dfrac 1 2\ (|\edgeperp|\, u_{K,\edgeperp} + |\edgeperp'|\, u_{L,\edgeperp'})$};
\end{tikzpicture}
\caption{Mass fluxes in the definition of the convection operator for the primal component of the velocity, in two space dimensions.}
\label{fig:convection} \end{figure}

\bigskip 
{\bf Discrete convection operator} --
Let us consider the momentum equation \eqref{eq:qdm} for the $i^{th}$ component of the velocity, and integrate it on a dual cell $D_\edge$, $\edge \in \edgesi$. 
By the Stokes formula, we then need to discretize $\sum_{\edged \in \edgesd(D_\edge)} \int_\edged \ui\, \bfu \cdot \bfn_{\edge,\edged}\dgammax,$ where $\bfn_{\edge,\edged}$ denotes the unit normal vector to $\edged$ outward $D_\edge$ and $\dgammax$ denotes the $d-1$-dimensional Lebesgue measure.
For $\edged = \edge | \edge'$, the convection flux $\int_\edged \ui \bfu \cdot \bfn_{\edge,\edged}\dgammax$ is approximated by $|\edged|\, u_{\edge,\edged}\, u_\edged^\ast$; usually, $u_\edged^\ast$ is chosen as the mean value of the two unknowns $u_\edge$ and $u_{\edge'}$. 
In some situations (high Reynolds number for instance), an upwind choice may be preferred. 
The two possible choices that will be considered for $u_\edged$ are thus:
\begin{equation} \label{dual:unknown}
u_\edged^\ast= u_\edged^{\mathrm{c}}=\frac{u_\edge+u_{\edge'}} 2 \mbox{ (centred choice, $\ast$=c) or }
u_\edged^\ast = u_\edged^{\mathrm{up}} =
\begin{cases} u_\edge \mbox{ if } u_{\edge, \edged} \ge 0, \\ u_{\edge'} \mbox{ otherwise,} \end{cases}
\mbox{ (upwind choice, $\ast$=up).}
\end{equation}
The quantity $|\edged|\, u_{\edge,\edged}$ is the numerical mass flux through $\edged$ outward $D_\edge$; it must be chosen carefully to obtain the $L^2$-stability of the scheme. 
More precisely, a discrete counterpart of $\dive \bfu=0$ should be satisfied also on the dual cells. 
To define $u_{\edge,\edged}$ on internal dual edges, we distinguish two cases (see Figure \ref{fig:convection}):
\begin{list}{-}{\itemsep=0.ex \topsep=0.5ex \leftmargin=1.cm \labelwidth=0.7cm \labelsep=0.3cm \itemindent=0.cm}
\item First case -- The vector $\bfe\ei$ is normal to $\edged$, and $\edged$ is included in a primal cell $K$, 
with $\edgesi(K) = \{\edge, \edge'\}$.
Then the mass flux through $\edged=\edge | \edge'$ is given by:
\begin{equation}\label{eq:flux_eK}
|\edged|\, u_{\edge,\edged}= \frac 1 2 \ (-|\edge|\,u_{K,\edge} + |\edge'|\, u_{K,\edge'}).
\end{equation}
Note that, in this relation, all the measures of the face are the same, so this definition equivalently reads $u_{\edge,\edged}= (-u_{K,\edge}+u_{K,\edge'} )/2$.

\smallskip
\item Second case -- The vector $\bfe\ei$ is tangent to $\edged$, and $\edged$ is the union of the halves of two primal faces $\edgeperp$ and $\edgeperp'$ such that $\edge=K|L$, $\edgeperp\in \edges(K)$ and $\edgeperp' \in \edges(L)$.
The mass flux through $\edged$ is then given by:
\begin{equation}\label{eq:flux_eorth}
|\edged|\, u_{\edge,\edged} = \frac 1 2\ (|\edgeperp|\, u_{K,\edgeperp} + |\edgeperp'|\, u_{L,\edgeperp'}).
\end{equation}
\end{list}
Again, the numerical flux on a dual face is conservative:
\begin{equation} \label{conservativity}
 u_{\edge,\edged}=- u_{\edge',\edged}, \quad \mbox{for any dual face } \edged=\edge|\edge'.
\end{equation}
Moreover, if $\dive_\mesh\, \bfu = 0$, the following discrete free divergence condition holds on the dual cells:
\begin{equation} \label{cons}
\sum_{\edged\in\edgesd(D_\edge)}|\edged|\, u_{\edge,\edged}
=\frac 1 2 \sum_{\edge\in\edges(K)} |\edge|\, u_{K,\edge} +\frac 1 2 \sum_{\edge\in\edges(L)}|\edge|\, u_{L,\edge} =0.
\end{equation}
On the external dual faces associated to free degrees of freedom (which means that we are in the second of the above cases), this definition yields $u_{\edge,\edged}=0$, which is consistent with the boundary condition \eqref{bc}.

\medskip
The $i$-th component $C_\edgesi(\bfu)$ of the non linear convection operator is defined by:
\begin{equation}\label{eq:conv-opispace}
\begin{array}{l|l}
C_\edgesi(\bfu): \quad
& \quad
\Hmeshizero \longrightarrow \Hmeshizero
\\ & \displaystyle \quad 
v \longmapsto C_\edgesi(\bfu)\, v = \sum_{\edge \in \edgesinti} \frac 1{|D_\edge|} 
\sum_{\edged \in \edgesd(D_\edge)} |\edged|\, u_{\edge,\edged} v_\edged^\ast \; \characteristic_{D_\edge},
\end{array}
\end{equation}
where $v_\edged^\ast$ is chosen centred or upwind, as defined in \eqref{dual:unknown}.
The full discrete convection operator $\bfC_\edges(\bfu), \ \Hmeshzero \longrightarrow \Hmeshzero$ is defined by 
\[
\bfC_\edges (\bfu)\, \bfv = \bigl(C_{\edges^{(1)}}(\bfu)\, v_1, \ldots, C_{\edges^{(d)}}(\bfu)\, v_d \bigr).
\]
%
%--------------------------------------------------------------------------------------------------------------------------------------------------
%
\section{The steady case} \label{sec:steady}

\subsection{The scheme}

With the notations introduced in the previous sections, the MAC scheme for the discretization of the steady Navier-Stokes equations \eqref{eq:ns} on a MAC grid $(\mesh,\edges)$ reads: 
\begin{subequations}\label{eq:scheme}
\begin{align} &
\bfu \in \Hmeshzero,\ p \in L_{\mesh,0},
\\[0.5ex] &
-\Delta_\edges \bfu + {\bfC}_\edges(\bfu) \bfu + \bfnabla_\edges p = \bff,
\\[0.5ex]
& \dive_\mesh\, \bfu = 0.
\end{align} 
\end{subequations}
The discrete right-hand side of the momentum balance equation reads $\bff= \mathcal P_\edges \bar \bff$, where $\mathcal P_\edges$ is the cell mean-value operator defined by
$
\mathcal{P}_\edges \bfv =(\mathcal{P}_{\edges^{(1)}} v_1, \cdots, \mathcal{P}_{\edges^{(d)}} v_d)
\in H_{\edges^{(1)},0} \times \cdots \times H_{\edges^{(d)},0}
$
and, for $\iinud$,
\begin{equation} \label{interpedges}
\begin{array}{l|l}
\mathcal{P}_\edgesi:
&
L^1(\Omega)\longrightarrow \Hmeshizero 
\\ &
\vi\;\longmapsto \displaystyle\mathcal{P}_\edgesi \vi = \sum_{\edge \in \edgesinti} v_\edge\, \characteristic_{D_\edge}
\mbox{ with, for }\edge \in \edgesinti,\ v_\edge= \frac 1 {|D_\edge|}\int_{D_\edge} \vi(\bfx) \dx.
\end{array}
\end{equation}
Let us define the weak form $b_\edges$ of the nonlinear convection term:
\begin{multline} \label{def:weak-conv-op} \qquad
\mbox{for } (\bfu, \bfv, \bfw) \in \Hmeshzero \times \Hmeshzero \times \Hmeshzero,
\ b_\edges(\bfu, \bfv,\bfw) = \sum_{i=1}^d b_\edgesi(\bfu,\vi, \wi),
\\
\mbox{ where for } \iinud,\ b_\edgesi(\bfu,\vi, \wi) = \int_\Omega C_\edgesi(\bfu) \vi \ \wi \dx. 
\qquad \end{multline}
We can now introduce a weak formulation of the scheme, which reads:
\begin{subequations}\label{eq:weak}
\begin{align} & \nonumber
\mbox{Find } (\bfu,p) \in \Hmeshzero \times L_{\mesh,0} \mbox{ such that, for any } (\bfv,q) \in \Hmeshzero\times L_\mesh,
\\ \label{eq:mac_qdm_weak} & \hspace{15ex}
\int_\Omega \bfnabla_\edgesd \bfu : \bfnabla_\edgesd \bfv \dx + b_\edges(\bfu, \bfu, \bfv) -\int_\Omega p\, \dive_\mesh\, \bfv \dx = 
\int_\Omega \bff \cdot \bfv \dx,
\\ \label{eq:mac_mass_weak} & \hspace{15ex}
\int_\Omega \dive_\mesh\, \bfu \ q \dx =0.
\end{align} \end{subequations}
This formulation is equivalent to the strong form \eqref{eq:scheme}.

\begin{remark}[Convergence of the MAC scheme for the Stokes problem and the gradient schemes theory]
Omitting the convection terms in \eqref{eq:weak}, we obtain a weak formulation of the MAC scheme for the linear Stokes problem.
Moreover, formulating the discrete $H^1$-inner product as the integral over $\Omega$ of dot products of discrete gradients, the MAC scheme can be interpreted as a gradient scheme in the sense introduced in \cite{eym-12-sma} (see \cite{feron-eymard} and \cite{eym-15-grad} for more details on the generalization of this formulation to other schemes).
Thanks to this result, the (strong) convergence of the velocity and of its discrete gradient to the exact velocity and its gradient can be shown, and thus also the strong convergence of the pressure.
\end{remark}
%
%-----------------------------------------------------------------------------------------
%
\subsection{Stability and existence of a solution}

\begin{figure}[!t]
\begin{tikzpicture}[scale=1]
%
% \eth_1 u_1
\draw[-,fill=blue!8] (0.5,6.5)--(3.5,6.5)--(3.5,8.5)--(0.5,8.5); \path(3.,6.7) node[blue!70]{$D_\edged$}; % dualdual cell
\draw[-](0,6.5)--(4,6.5); \draw[-](0,8.5)--(4,8.5); \draw[-](0.5,6)--(0.5,9); \draw[-](3.5,6)--(3.5,9); % primal mesh
\draw[->, very thick](0.1,7.5)--(0.9,7.5); \draw[->, very thick](3.1,7.5)--(3.9,7.5);
\path(0.3,7.7) node{$u_\edge$}; \path(3.3,7.7) node{$u_{\edge'}$};
\draw[-, very thick, blue!50] (2,6.5)--(2,8.5); \path(1.85,7.5) node[blue] {$\edged$}; % dual face
\path(2,5.5) node{$(\mathcal R_\edgesd^{(1,1)} u_1)_{D_\edged} = \alpha_\edged\, u_\edge + (1-\alpha_\edged)\, u_{\edge'}$};
%
% \eth_2 u_1, internal face
\draw[-, blue!8, fill=blue!8] (1.5,1)--(3.25,1)--(3.25,3)--(1.5,3); \path(1.8,2.7) node[blue!70]{$D_\edged$}; % dualdual cell
\draw[-](0,0)--(4.5,0); \draw[-](0,2)--(4.5,2); \draw[-](0,4)--(4.5,4);
\draw[-](0.5,-0.5)--(0.5,4.5); \draw[-](2.5,-0.5)--(2.5,4.5); \draw[-](4,-0.5)--(4,4.5); % primal mesh
\draw[->, very thick](2.1,1)--(2.9,1); \draw[->, very thick](2.1,3)--(2.9,3);
\path(2.3,1.2) node{$u_\edge$}; \path(2.3,3.2) node{$u_{\edge'}$};
\draw[-, very thick, blue!50] (1.5,2)--(3.25,2); \path(2.2,2.15) node[blue] {$\edged$}; % dual face
\path(2.25,-1.2) node{$(\mathcal R_\edgesd^{(1,2)} u_1)_{D_\edged} = \alpha_\edged\,u_\edge + (1-\alpha_\edged)\, u_{\edge'}$};
%
% \eth_2 u_1, external face (up)
\draw[-, blue!8, fill=blue!8] (9.5,1)--(11.25,1)--(11.25,2)--(9.5,2); \path(9.8,1.3) node[blue!70]{$D_\edged$}; % dualdual cell
\draw[-](8,0)--(12.5,0); \draw[-, very thick](8,2)--(12.5,2);
\draw[-](8.5,-0.5)--(8.5,2); \draw[-](10.5,-0.5)--(10.5,2); \draw[-](12,-0.5)--(12,2); % primal mesh
\draw[->, very thick](10.1,1)--(10.9,1); \path(10.3,1.2) node{$u_\edge$};
\draw[-, very thick, blue!50] (9.5,2)--(11.25,2); \path(10.5,2.15) node[blue] {$\edged$}; % dual face
\path(10.25,-1.2) node{$(\mathcal R_\edgesd^{(1,2)} u_1)_{D_\edged} = \alpha_\edged\, u_\edge$};
\end{tikzpicture}
\caption{Reconstruction of the first component of the velocity, in two space dimensions.
First line: $\mathcal R_\edgesd^{(1,1)}$.
Second line: $\mathcal R_\edgesd^{(1,2)}$, inner dual face (left) and dual face lying on the boundary (right).
The real number $\alpha_\edged$ is only supposed to satisfy $\alpha_\edged\in[0,1]$.}
\label{fig:recons} \end{figure}
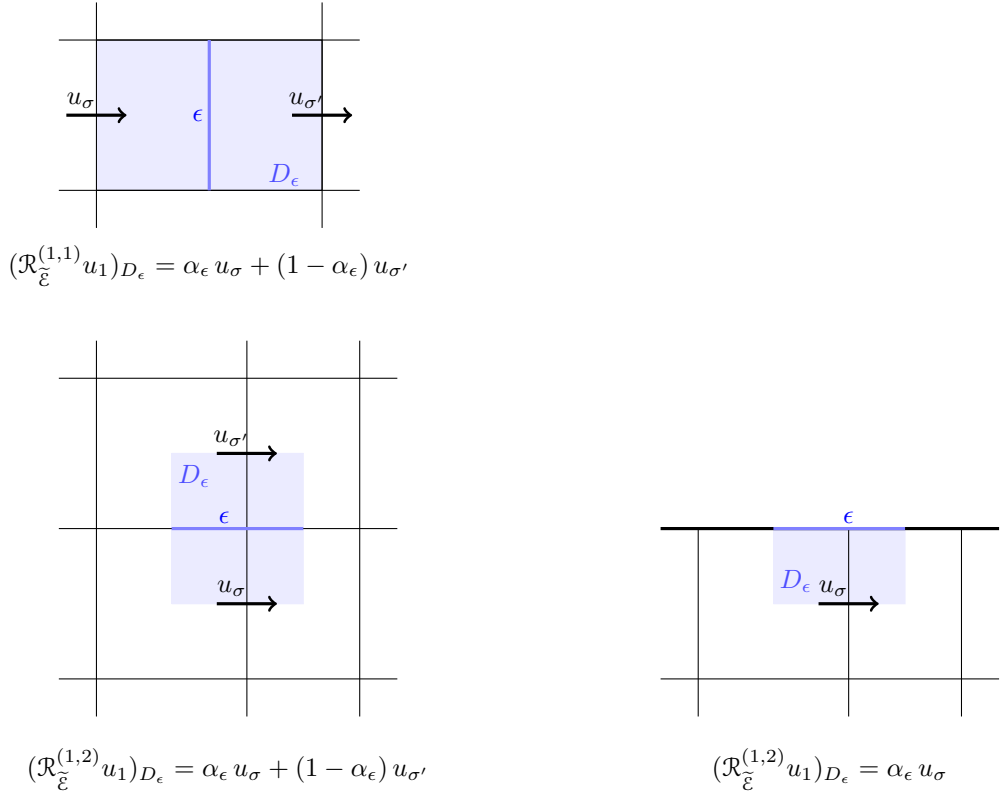

To prove the scheme stability, it is convenient to first reformulate the trilinear form associated to the velocity convection term.
To this purpose, we introduce a reconstruction of the velocity components on the partitions which where used for the definition of the discrete velocity gradient.
This leads to define $d\times d$ class of reconstruction operators, denoted by $\mathcal R_\edgesd^{(i,j)}$, with $\mathcal R_\edgesd^{(i,j)}$ acting on the $i^{th}$ component of the velocity and providing a reconstruction of this field on the partition of $\Omega$ associated to its $j^{th}$ partial derivative.

\begin{definition}[Velocity reconstructions]\label{def:vel-reco}
Let $(\mesh,\edges)$ be a given MAC mesh, and let $i,j \in \llbracket 1, d \rrbracket$.
Let $\mathcal R_\edgesd^{(i,j)}$ be a reconstruction operator defined as follows:
\[
\begin{array}{l|ccl}
\mathcal R_\edgesd^{(i,j)}: \quad & \Hmeshizero & \to & L^2(\Omega)
\\[2ex]
& v & \mapsto & \displaystyle
\mathcal R_\edgesd^{(i,j)} v =
\sum_{\edged \in \edgesdi,\ \edged \perp \bfe\ej} (\mathcal R_\edgesd^{(i,j)} v)_{D_\edged}\ \characteristic_{D_\edged},
\end{array}
\]
where $(\mathcal R_\edgesd^{(i,j)} v)_{D_\edged}$ is a convex combination of the (one of two) discrete values of the $i^{th}$ component of the velocity lying on faces of $D_\edged$ (see Figure \ref{fig:recons}).
\end{definition}

\medskip
Such a reconstruction operator satisfies the following stability result.

\begin{lemma}[Stability of the velocity reconstruction operators]\label{lem:stab-vel-reco}
Let $(\mesh,\edges)$ be a given MAC mesh, $i,j \in \llbracket 1, d\rrbracket$, and $\mathcal R_\edgesd^{(i,j)}$ be a reconstruction operator, in the sense of Definition \ref{def:vel-reco}.
Then, for $p \in [1,+\infty)$, there exists $C_{\eta_\mesh} \ge 0$, depending only on $p$ and on the parameter $\eta_\mesh$ characterizing the regularity of the mesh defined by \eqref{regmesh}, and non-decreasing with respect to $\eta_\mesh$, such that, for any $v \in \Hmeshizero$, 
\[
\| \mathcal R_\edgesd^{(i,j)} v \|_{L^p(\Omega)} \le C_{\eta_\mesh}\ \| v \|_{L^p(\Omega)}.
\]
\end{lemma}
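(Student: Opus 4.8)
The plan is to reduce the $L^p$-stability estimate to a cell-by-cell (more precisely, dual-face-by-dual-face) comparison between the reconstructed function $\mathcal R_\edgesd^{(i,j)} v$ and the original piecewise-constant function $v$. Since both functions are piecewise constant — $\mathcal R_\edgesd^{(i,j)} v$ on the $(i,j)^{th}$ partition $(D_\edged)_{\edged \in \edgesdi,\, \edged \perp \bfe\ej}$ and $v$ on the dual mesh $(D_\edge)_{\edge \in \edgesi}$ — we have
\[
\| \mathcal R_\edgesd^{(i,j)} v \|_{L^p(\Omega)}^p = \sum_{\substack{\edged \in \edgesdi\\ \edged \perp \bfe\ej}} |D_\edged|\ \bigl| (\mathcal R_\edgesd^{(i,j)} v)_{D_\edged} \bigr|^p .
\]
By Definition~\ref{def:vel-reco}, $(\mathcal R_\edgesd^{(i,j)} v)_{D_\edged} = \alpha_\edged u_\edge + (1-\alpha_\edged) u_{\edge'}$ with $\alpha_\edged \in [0,1]$ (one of the two values being zero when $\edged$ lies on the boundary), so by convexity of $t \mapsto |t|^p$,
\[
\bigl| (\mathcal R_\edgesd^{(i,j)} v)_{D_\edged} \bigr|^p \le \alpha_\edged |u_\edge|^p + (1-\alpha_\edged) |u_{\edge'}|^p \le |u_\edge|^p + |u_{\edge'}|^p .
\]
Summing, each term $|u_\edge|^p$ appears attached to those $D_\edged$ having $\edge$ among the faces carrying their reconstruction value, and I would collect these contributions per face $\edge$.

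The second step is the geometric bookkeeping. Fix $\edge \in \edgesi$. The sub-volumes $D_\edged$ (with $\edged \perp \bfe\ej$) whose reconstruction involves $u_\edge$ are exactly those $D_\edged$ that touch $\edge$; from the description of the $(i,j)^{th}$ partition, $D_\edged = \edged \times [\bfx_\edge\,\bfx_{\edge'}]$ (or $\edged \times [\bfx_\edge\,\bfx_{\edge,\edged}]$ on the boundary), and there are a bounded number of them (at most $2(d-1)$ when $i \neq j$, or exactly two when $i=j$, since that partition is the primal mesh). For each such $D_\edged$ I must bound $|D_\edged|$ by a uniform constant times $|D_\edge|$. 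The factor $|\edged|$ is a $(d-1)$-measure of a face of a primal or dual cell, and $d_\edged$ is a distance between face centers; the ratio $|D_\edged|/|D_\edge|$ is controlled by ratios of the form $|\edged|/|\edge''|$ and by ratios of edge lengths in the direction $\bfe\ej$ versus $\bfe\ei$. All of these are bounded above by a fixed power of $\eta_\mesh$ as defined in \eqref{regmesh}, together with the fact that within a fixed primal cell the various half-edge lengths differ by at most a factor $2$. Hence there is $C_{\eta_\mesh} \ge 0$, non-decreasing in $\eta_\mesh$, with $\sum_{\edged \ni \edge} |D_\edged| \le C_{\eta_\mesh} |D_\edge|$, where the sum is over the (finitely many) $D_\edged$ of the $(i,j)^{th}$ partition adjacent to $\edge$.

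Combining the two steps: after exchanging the order of summation,
\[
\| \mathcal R_\edgesd^{(i,j)} v \|_{L^p(\Omega)}^p \le \sum_{\edge \in \edgesi} \Bigl( \sum_{\edged \ni \edge} |D_\edged| \Bigr) |u_\edge|^p \le C_{\eta_\mesh} \sum_{\edge \in \edgesi} |D_\edge|\ |u_\edge|^p = C_{\eta_\mesh}\ \| v \|_{L^p(\Omega)}^p ,
\]
and raising to the power $1/p$ gives the claim (absorbing the extra harmless numerical factor from counting how many $D_\edged$ share a given $\edge$ into $C_{\eta_\mesh}$, and noting the bound is trivial when $v$ vanishes on boundary faces). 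The main obstacle is not the analysis — convexity does all the work there — but making the geometric estimate $|D_\edged| \le C_{\eta_\mesh} |D_\edge|$ precise and uniform: one has to check carefully, for each of the configurations of Figure~\ref{fig:recons} (inner dual faces, dual faces orthogonal versus parallel to $\bfe\ei$, and faces lying on $\partial\Omega$), that the relevant edge-length and face-measure ratios are indeed controlled by $\eta_\mesh$, using only that the mesh is a structured Cartesian grid so that lengths in a given coordinate direction are shared along entire rows or columns of cells.
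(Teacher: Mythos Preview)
Your proof is correct and follows essentially the same route as the paper's: expand $\|\mathcal R_\edgesd^{(i,j)} v\|_{L^p}^p$ over the $(i,j)^{th}$ partition, bound each term pointwise using that the reconstruction is a convex combination, reorder the sum by face $\edge$, and conclude with the geometric estimate $\sum_{\edged}|D_\edged|\le C_{\eta_\mesh}|D_\edge|$. Two minor remarks: your use of the convexity of $t\mapsto|t|^p$ is slightly sharper than the paper's cruder inequality $(a+b)^p\le 2^{p-1}(a^p+b^p)$; and your count of the $D_\edged$ adjacent to a given $\edge$ is too generous---since only dual faces $\edged\perp\bfe\ej$ are involved, each $D_\edge$ contributes exactly two such faces regardless of whether $i=j$ or not, as the paper notes.
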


\begin{proof}
Let $p \in [1,+\infty)$, $i,j \in \llbracket 1, d\rrbracket$ and $v \in \Hmeshizero$.
We have:
\begin{align*}
\| \mathcal R_\edgesd^{(i,j)} v \|_{L^p(\Omega)}^p
&
= \sum_{\edged \in \edgesdi,\ \edged \perp \bfe\ej} |D_\edged|\ |(\mathcal R_\edgesd^{(i,j)} v)_{D_\edged}|^p
\\ &
=\sum_{\substack{\edged \in \edgesdinti \\ \edged=\overrightarrow{\edge\edge'},\ \edged \perp \bfe\ej}}
|D_\edged|\ \bigl|\alpha_\edged v_\edge +(1-\alpha_\edged)\, v_{\edge'}\bigr|^p
+ \sum_{\substack{\edged \in \edgesdexti \\ \edged\in\edgesd(D_\edge),\ \edged \perp \bfe\ej}}
|D_\edged|\ |\alpha_\edged v_\edge|^p.
\end{align*}
Since $|\alpha_\edged|\leq 1$ and $(a+b)^p\leq 2^{p-1}(a^p+b^p)$, for $a,b \in [0,+\infty)$, we get:
\[
\| \mathcal R_\edgesd^{(i,j)} v \|_{L^p(\Omega)}^p \leq
2^{p-1} \sum_{\substack{\edged \in \edgesdinti \\ \edged=\edge\edge',\ \edged \perp \bfe\ej}}
|D_\edged|\ \bigl(|v_\edge|^p + |v_{\edge'}|^p\bigr)
+ \sum_{\substack{\edged \in \edgesdexti \\ \edged\in\edgesd(D_\edge),\ \edged \perp \bfe\ej}}
|D_\edged|\ |v_\edge|^p.
\]
Reordering the sums, we obtain that:
\[
\| \mathcal R_\edgesd^{(i,j)} v \|_{L^p(\Omega)}^p \leq
2^{p-1} \sum_{\edge\in\edgesi} |V_\edge|\ |v_\edge|^p,
\]
where the volume $V_\edge$ is the sum of the two volumes $D_\edged$ such that $\edged$ is a face of $D_\edge$.
It may now be easily checked that there exists $C_{\eta_\mesh}$ depending only on the the parameter $\eta_\mesh$ and non-decreasing with respect to this parameter such that $|V_\edge|\leq C_{\eta_\mesh} |D_\edge|$, which concludes the proof.
\end{proof}

\medskip
The discretization of the velocity convection term in the $i^{th}$ momentum balance equation may be seen as a discrete counterpart of $\dive (\vi \, \bfu)$, where $\vi$ is the convected component of the velocity field (in the scheme, $\vi=\ui$).
Multipling this expression by $\wi$ and inegrating over $\Omega$ yields a continuous counterpart $b\ei(\bfu,\vi,\wi)$ of $b_\edgesi(\bfu,\vi,\wi)$ which reads:
\[
b\ei(\bfu,\vi,\wi) =\int_\Omega \dive (\vi \, \bfu)\ \wi \dx.
\]
An integration by parts (supposing that $\wi$ vanishes on the boundary) yields:
\[
b\ei(\bfu,\vi,\wi) =- \int_\Omega \vi \, \bfu \cdot \bfnabla \wi \dx
= - \sum_{j=1}^d \int_\Omega \vi \, \uj\, \partial_j \wi \dx.
\]
The following lemma states a discrete equivalent of this relation.

\begin{lemma}[Reformulation of $b_\edges$]\label{lem:b_new_form}
Let $(\mesh,\edges)$ be a given MAC mesh, $\iinud$, and $(\bfu,\bfv,\bfw) \in \Hmeshzero \times \Hmeshzero \times \Hmeshzero$.
Let $b_\edgesi(\bfu,\vi,\wi)$ be given by \eqref{def:weak-conv-op}.
Then there exists two reconstruction operators in the sense of Definition \ref{def:vel-reco}, denoted by $(\mathcal R_{\edgesd_n}^{(i,j)})^u$ and $(\mathcal R_{\edgesd_n}^{(j,i)})^v$, such that:
\[
b_\edgesi(\bfu,\vi,\wi)=- \sum_{j=1}^d
\int_\Omega (\mathcal R_\edgesd^{(i,j)})^v \vi\ (\mathcal R_\edgesd^{(j,i)})^u \uj\ \eth_j \wi \dx.
\]
\end{lemma}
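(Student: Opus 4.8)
The plan is to start from the definition \eqref{def:weak-conv-op}--\eqref{eq:conv-opispace} of $b_\edgesi$, namely
\[
b_\edgesi(\bfu,\vi,\wi)
= \int_\Omega C_\edgesi(\bfu)\vi\ \wi \dx
= \sum_{\edge\in\edgesinti} \wi_\edge \sum_{\edged\in\edgesd(D_\edge)} |\edged|\, u_{\edge,\edged}\, v_\edged^\ast,
\]
and to perform a discrete integration by parts, exactly as in the continuous computation recalled just before the statement. The key algebraic fact is the conservativity of the dual mass flux \eqref{conservativity}: for an internal dual face $\edged=\edge|\edge'$ the term $|\edged|\,u_{\edge,\edged}\,v_\edged^\ast$ appears in the sum over $D_\edge$ with coefficient $\wi_\edge$ and in the sum over $D_{\edge'}$ with coefficient $\wi_{\edge'}$ but with $u_{\edge',\edged}=-u_{\edge,\edged}$; regrouping the two contributions of each internal dual face yields
\[
b_\edgesi(\bfu,\vi,\wi)
= - \sum_{\substack{\edged\in\edgesdinti\\ \edged=\overrightarrow{\edge|\edge'}}}
|\edged|\, u_{\edge,\edged}\, v_\edged^\ast\, (\wi_{\edge'}-\wi_\edge)
\ -\ \sum_{\substack{\edged\in\edgesdexti\\ \edged\in\edgesd(D_\edge)}}
|\edged|\, u_{\edge,\edged}\, v_\edged^\ast\, \wi_\edge ,
\]
the boundary terms being actually zero on the external dual faces carrying a free degree of freedom, by the remark following \eqref{cons}, but it is harmless to keep them.

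Next I would recognise each factor in this sum as an evaluation of one of the discrete objects already introduced. The difference quotient is handled by \eqref{eq:partial-v}: for $\edged=\overrightarrow{\edge|\edge'}\perp\bfe\ej$ one has $\wi_{\edge'}-\wi_\edge = d_\edged\,(\eth_j\wi)_{D_\edged}$, and on a boundary dual face $-\wi_\edge$ equals $d_\edged\,(\eth_j\wi)_{D_\edged}$ up to the sign factor $\overrightarrow{\bfx_\edge\bfx_{\edge,\edged}}\cdot\bfe\ej$ built into that definition. The quantity $v_\edged^\ast$, whether centred or upwind as in \eqref{dual:unknown}, is a convex combination of the one or two values of $\vi$ sitting on the faces of $D_\edged$, hence by Definition \ref{def:vel-reco} it is of the form $\bigl((\mathcal R_\edgesd^{(i,j)})^v \vi\bigr)_{D_\edged}$ for a suitable choice of the convex weights $\alpha_\edged$ (these weights depend on $\bfu$ through the sign of $u_{\edge,\edged}$ in the upwind case, which is allowed since Definition \ref{def:vel-reco} only requires $\alpha_\edged\in[0,1]$). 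Finally the mass flux: by \eqref{eq:flux_eK} in the ``first case'' and \eqref{eq:flux_eorth} in the ``second case'', $|\edged|\,u_{\edge,\edged}$ is, up to the geometric factor $|\edged|$, a convex combination (weights $\tfrac12,\tfrac12$ after accounting for the face measures, or a single term on the boundary) of values of $\uj$ on faces adjacent to $D_\edged$; dividing by $|\edged|$ and invoking $|D_\edged|=|\edged|\,d_\edged$ one rewrites $|\edged|\,u_{\edge,\edged}\,(\wi_{\edge'}-\wi_\edge) = |D_\edged|\,\bigl((\mathcal R_\edgesd^{(j,i)})^u\uj\bigr)_{D_\edged}\,(\eth_j\wi)_{D_\edged}$, where $(\mathcal R_\edgesd^{(j,i)})^u$ reconstructs the $j$-th velocity component on the $(j,i)$-partition — which, as noted in the text, is the same partition as the $(i,j)$-one. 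Summing over all dual faces orthogonal to each $\bfe\ej$ and then over $j$ turns the two sums above into $-\sum_{j=1}^d\int_\Omega (\mathcal R_\edgesd^{(i,j)})^v\vi\ (\mathcal R_\edgesd^{(j,i)})^u\uj\ \eth_j\wi\dx$, which is the claim.

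The routine part is the bookkeeping of face measures and the two geometric cases for the dual flux; the one genuinely delicate point — and the step I expect to be the main obstacle — is checking that the mass-flux term $|\edged|\,u_{\edge,\edged}$ really is, after division by $|\edged|$, a \emph{convex} combination of face-values of $\uj$ in the sense required by Definition \ref{def:vel-reco}, in particular that the relevant convex weights lie in $[0,1]$ and that the two faces involved are indeed faces of the cell $D_\edged$ of the $(j,i)$-partition (in the ``second case'' the two half-faces $\edgeperp,\edgeperp'$ of \eqref{eq:flux_eorth} carry the component $\uj$ and are orthogonal to $\bfe\ei$, hence sit on $D_\edged$ with $\edged\perp\bfe\ej$, as needed), and that on external dual faces the degenerate one-term reconstruction is still of the admissible form. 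Once this identification is made cell by cell, the rest is reindexing and the lemma follows.
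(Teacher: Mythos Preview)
Your proposal is correct and follows essentially the same route as the paper: reorder the double sum by conservativity of the dual mass flux, recognise $(\wi_{\edge'}-\wi_\edge)/d_\edged$ as $(\eth_j\wi)_{D_\edged}$, identify $v_\edged^\ast$ as a convex-combination reconstruction $(\mathcal R_\edgesd^{(i,j)})^v\vi$, and identify $u_{\edge,\edged}$ itself (not $|\edged|\,u_{\edge,\edged}$) as a convex-combination reconstruction $(\mathcal R_\edgesd^{(j,i)})^u\uj$ via \eqref{eq:flux_eK}--\eqref{eq:flux_eorth}. The paper handles the boundary dual faces more tersely by simply summing over $\edgesdinti$ (the external contributions vanish, as you note), and it asserts the convex-combination property of $u_{\edge,\edged}$ without the detailed face-measure bookkeeping you flag as the main obstacle; your caution there is well placed but the verification is indeed routine once one observes that in the second case $|\edged|=\tfrac12(|\edgeperp|+|\edgeperp'|)$.
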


\begin{proof}
Let $(\bfu, \bfv,\bfw) \in \bfE_\edges\times\Hmeshzero^2$. 
By definition, 
\[
b_\edgesi(\bfu,\vi,\wi)
= \sum_{\edge\in\edgesi} w_\edge \sum_{\edged\in\edgesd(D_\edge)} |\edged|\ v^\ast_\edged\, u_{\edge,\edged}.
\]
Reordering the sums, we get by conservativity:
\[
b_\edgesi(\bfu,\vi,\wi)
= \sum_{\edged=\edge|\edge' \in \edgesdinti} |\edged|\ v^\ast_\edged\ u_{\edge,\edged}\ (w_\edge-w_{\edge'})
= -\sum_{\edged=\edge|\edge' \in \edgesdinti} |D_\edged|\ v^\ast_\edged\ u_{\edge,\edged}\ \dfrac{w_{\edge'}-w_\edge}{d_\edged}.
\]
The sum is over the whole set of dual faces $\edgesdi$, so over the $d$ partitions involved in the definition of the discrete gradient of $\wi$. 
In addition, without loss of generality, we may suppose that we have chosen for $\edged$ the orientation such that $\edged=\overrightarrow{\edge|\edge'}$.
Hence, we get, by definition \eqref{eq:partial-v},
\[
\dfrac{w_{\edge'}-w_\edge}{d_\edged}=(\eth_j \wi)_{D_\edged},
\]
where $j$ is the index such that $\edged$ is normal no $\bfe\ej$.
For the centered version of the convection operators, $v^\ast_\edged=(v_\edge + v_{\edge'})/2$; in the upwind case, it is equal to either $v_\edge$ or $v_{\edge'}$ (Relation \eqref{dual:unknown}).
In both cases, it is a convex combination of the two discrete values of $\vi$ lying on the faces of $D_\edged$; there exists thus an operator $\mathcal R_\edgesd^{(i,j)}$ (still with the same meaning for $j$), in the sense of Definition \ref{def:vel-reco} such that $v^\ast_\edged= (\mathcal R_\edgesd^{(i,j)} \vi)_{D_\edged}$.
Finally, from the definition of the convection operator and with the chosen orientation for $\edged$, $u_{\edge,\edged}$ is a convex combination of the two values of $\uj$ lying on the faces on $D_\edged$: either the mean value given by \eqref{eq:flux_eK}, if $j=i$, either the convex combination of \eqref{eq:flux_eorth}, if $j\neq i$.
In addition, $D_\edged$ is a volume used in the definition of the $j^{th}$ discrete partial derivative of the $i^{th}$ component, and thus also a volume used in the definition of the $i^{th}$ discrete partial derivative of the $j^{th}$ component (both partitions are the same).
So there exists one reconstruction operator $\mathcal R_\edgesd^{(j,i)}$ such that $u_{\edge,\edged}=(\mathcal R_\edgesd^{(j,i)} \uj)_{D_\edged}$, which concludes the proof.
\end{proof}

\begin{lemma}[Estimates on $b_\edges$]
Let $(\mesh,\edges)$ be a MAC grid and let $b_\edges$ be defined by \eqref{def:weak-conv-op}.
There exists $C_{\eta_\mesh} >0$, depending only and in non-decreasing way on the regularity parameter $\eta_\mesh$ of the mesh defined by \eqref{regmesh}, such that:
\begin{equation}\label{eq:contA}
\forall\ (\bfu,\bfv,\bfw) \in \Hmeshzero^3,\qquad
|b_\edges(\bfu,\bfv,\bfw)| \leq C_{\eta_\mesh}\ \|\bfu\|_{L^4(\Omega)^d}\ \|\bfv\|_{L^4(\Omega)^d}\ \|\bfw\|_{1,\edges,0}
\end{equation}
and
\begin{equation}\label{eq:contB}
\forall\ (\bfu, \bfv,\bfw) \in \Hmeshzero^3, \qquad
|b_\edges(\bfu,\bfv,\bfw)| \leq C_{\eta_\mesh}\ \|\bfu\|_{1,\edges,0}\ \|\bfv\|_{1,\edges,0}\ \|\bfw\|_{1,\edges,0}.
\end{equation}
\end{lemma}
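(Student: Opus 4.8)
The plan is to start from the reformulation of $b_\edgesi$ provided by Lemma \ref{lem:b_new_form}, which writes, for each $i$,
\[
b_\edgesi(\bfu,\vi,\wi)=- \sum_{j=1}^d
\int_\Omega (\mathcal R_\edgesd^{(i,j)})^v \vi\ (\mathcal R_\edgesd^{(j,i)})^u \uj\ \eth_j \wi \dx.
\]
From there, the two estimates are obtained by the same mechanism, only the distribution of the exponents across the three factors changes. First I would apply the triangle inequality over $i$ and $j$, and then H\"older's inequality on each integral $\int_\Omega (\mathcal R_\edgesd^{(i,j)})^v \vi\ (\mathcal R_\edgesd^{(j,i)})^u \uj\ \eth_j \wi \dx$: for \eqref{eq:contA} with exponents $(4,4,2)$, and for \eqref{eq:contB} with exponents $(4,4,2)$ as well but then controlling the $L^4$ norms of $\uj,\vi$ by their discrete $H^1_0$ norms.

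The key steps are then: (i) bound $\|(\mathcal R_\edgesd^{(i,j)})^v \vi\|_{L^4(\Omega)}$ and $\|(\mathcal R_\edgesd^{(j,i)})^u \uj\|_{L^4(\Omega)}$ by $C_{\eta_\mesh}\|\vi\|_{L^4(\Omega)}$ and $C_{\eta_\mesh}\|\uj\|_{L^4(\Omega)}$ respectively, using the stability of the reconstruction operators, Lemma \ref{lem:stab-vel-reco} (with $p=4$); (ii) observe that $\|\eth_j \wi\|_{L^2(\Omega)}\le \|\bfnabla_\edgesdi \wi\|_{L^2(\Omega)^d}=\|\wi\|_{1,\edgesi,0}\le\|\bfw\|_{1,\edges,0}$ by \eqref{gradient-and-innerproduct} and \eqref{normfull}; (iii) sum over $i,j$ and bound the resulting sums of $\|\ui\|_{L^4},\|\vi\|_{L^4}$ (finitely many terms, $d$ fixed) by $\|\bfu\|_{L^4(\Omega)^d}$ and $\|\bfv\|_{L^4(\Omega)^d}$ up to a dimensional constant; this yields \eqref{eq:contA}. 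For \eqref{eq:contB}, the extra ingredient is a discrete Sobolev embedding $\|\ui\|_{L^4(\Omega)}\le C_{\eta_\mesh}\|\ui\|_{1,\edgesi,0}$ (discrete analogue of $H^1_0(\Omega)\hookrightarrow L^4(\Omega)$, valid for $d\le 3$, which holds under a bound on $\eta_\mesh$), applied to both $\uj$ and $\vi$, after which the same H\"older/summation argument gives the product of three discrete $H^1_0$ norms.

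The main obstacle — or rather the one point that requires care rather than routine estimation — is justifying the discrete Sobolev inequality $\|\ui\|_{L^4(\Omega)}\le C_{\eta_\mesh}\|\ui\|_{1,\edgesi,0}$ needed for \eqref{eq:contB}, since it is a genuinely nontrivial functional-analytic estimate on the MAC grid (as opposed to the purely algebraic reorderings used elsewhere). In practice this is standard finite-volume material (see \cite{book}), and one would either cite it directly for each velocity component on its own dual mesh or give a short argument reducing it to the known scalar case; one must only check that the constant depends on the mesh only through $\eta_\mesh$ in a non-decreasing way, which is exactly the form in which the per-component and global discrete $H^1_0$ norms \eqref{normi}--\eqref{normfull} are set up. Everything else is bookkeeping: a finite number of applications of H\"older, the reconstruction-stability Lemma \ref{lem:stab-vel-reco}, and the identity \eqref{gradient-and-innerproduct} linking $\eth_j\wi$ to the discrete $H^1_0$ seminorm.
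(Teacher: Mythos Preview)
Your proposal is correct and follows essentially the same route as the paper: reformulate $b_\edgesi$ via Lemma~\ref{lem:b_new_form}, apply H\"older with exponents $(4,4,2)$ together with the $L^4$-stability of the reconstruction operators (Lemma~\ref{lem:stab-vel-reco}) and the identity \eqref{gradient-and-innerproduct} to obtain \eqref{eq:contA}, then invoke the discrete Sobolev inequality \cite[Lemma~3.5]{book} to deduce \eqref{eq:contB}. The paper's proof is in fact slightly terser than yours---it cites the discrete Sobolev inequality directly without the discussion you flag as the ``main obstacle''---but the ingredients and their order are identical.
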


\begin{proof}
Let $\iinud$.
Thanks to Lemma \ref{lem:b_new_form}, there exists two reconstruction operators in the sense of Definition \ref{def:vel-reco}, denoted by $(\mathcal R_\edgesd^{(i,j)})^v$ and $(\mathcal R_\edgesd^{(j,i)})^u$ such that:
\[
b_\edgesi(\bfu,\vi,\wi)=- \sum_{j=1}^d
\int_\Omega (\mathcal R_\edgesd^{(i,j)})^v \vi\ (\mathcal R_\edgesd^{(j,i)})^u \uj\ \eth_j \wi \dx.
\]
Thanks to H\"older's inequality, we get, for $j \in \llbracket 1, d \rrbracket$:
\[
\Bigl| \int_\Omega (\mathcal R_\edgesd^{(i,j)})^v \vi\ (\mathcal R_\edgesd^{(j,i)})^u \uj\ \eth_j \wi \dx \Bigr| \leq
\|(\mathcal R_\edgesd^{(i,j)})^v \vi\|_{L^4(\Omega)}
\ \|(\mathcal R_\edgesd^{(i,j)})^u \uj\|_{L^4(\Omega)}
\ \|\eth_j \wi\|_{L^2(\Omega)},
\]
which, in view of Lemma \ref{lem:stab-vel-reco} and the identity \eqref{gradient-and-innerproduct}, concludes the proof of Estimate \eqref{eq:contA}.
We then deduce \eqref{eq:contB} by the discrete Sobolev inequality \cite[Lemma 3.5]{book} which allows to control the $L^4$-norm by the discrete $H^1$-norm.
\end{proof}

\bigskip
Let us now prove that $b_\edges$ is skew-symmetrical with respect to the last two variables.
At the continuous level, this result is obtained as follows.
For $\iinud$, on one side, we have by integration by parts:
\begin{equation} \label{eq:cont-1}
\int_\Omega \wi \ \dive(\vi\,\bfu) \dx= - \int_\Omega \vi\,\bfu \cdot \bfnabla \wi \dx.
\end{equation}
On the other side, since $\dive\,\bfu=0$, $\dive(\vi\,\bfu)= \bfu \cdot \bfnabla \vi$, so:
\[
\int_\Omega \wi \ \dive(\vi\,\bfu) \dx= \int_\Omega \wi\,\bfu \cdot \bfnabla \vi \dx,
\]
which yields the conclusion.
The following lemma states a discrete analogue of this property.

\begin{lemma} [$b_\edges$ is skew-symmetrical] \label{lem:skew-sym}
Let $(\bfu, \bfv,\bfw) \in \bfE_\edges\times\Hmeshzero\times\Hmeshzero$, and let $\iinud$.
Assume the centred choice for $v_\edged$ in the expression of $b_\edgesi$; then 
\begin{equation}\label{eq:antB}
b_\edgesi(\bfu, \vi,\wi)=-b_\edgesi(\bfu, \wi,\vi),
\end{equation}
and therefore,
\begin{equation}\label{eq:buuuzero}
b_\edgesi(\bfu,\vi,\vi) = 0.
\end{equation}
Assume now the upwind choice for $v_\edged$ in the expression of $b_\edgesi$; then,
\begin{equation} \label{eq:buuupos}
b_\edgesi(\bfu,\vi,\vi) \ge 0.
\end{equation}
\end{lemma}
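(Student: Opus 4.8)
The plan is to mimic the continuous argument at the discrete level, using the conservativity of the numerical mass flux on the dual cells and the discrete divergence-free condition \eqref{cons} which holds since $\bfu\in\bfE_\edges$. First I would write out $b_\edgesi(\bfu,\vi,\wi)$ from the definition \eqref{eq:conv-opispace}--\eqref{def:weak-conv-op}, reorder the sum over dual cells into a sum over dual faces, and use conservativity \eqref{conservativity} to obtain
\[
b_\edgesi(\bfu,\vi,\wi)=\sum_{\edged=\edge|\edge'\in\edgesdinti}|\edged|\,u_{\edge,\edged}\,v^\ast_\edged\,(w_\edge-w_{\edge'}).
\]
For the centred choice, $v^\ast_\edged=(v_\edge+v_{\edge'})/2$, so the summand is symmetric in the pair $(\edge,\edge')$ up to the factor $(w_\edge-w_{\edge'})$, and I would exploit the algebraic identity $\tfrac12(v_\edge+v_{\edge'})(w_\edge-w_{\edge'})=\tfrac12(v_\edge w_\edge-v_{\edge'}w_{\edge'})-\tfrac12(v_{\edge'}w_\edge-v_\edge w_{\edge'})$. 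The first group, when summed against $|\edged|\,u_{\edge,\edged}$ and re-expanded back over dual cells, telescopes into $\sum_\edge (v_\edge w_\edge)\sum_{\edged\in\edgesd(D_\edge)}|\edged|\,u_{\edge,\edged}$, which vanishes by \eqref{cons}; the remaining group is exactly $-\,b_\edgesi(\bfu,\wi,\vi)$ after using conservativity once more to swap the roles of $\vi$ and $\wi$. This gives \eqref{eq:antB}, and \eqref{eq:buuuzero} follows by taking $\wi=\vi$.

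For the upwind estimate \eqref{eq:buuupos}, I would again start from the dual-face form with $\wi=\vi$, now with $v^\ast_\edged$ the upwind value. I would group the dual faces into those incident to each dual cell $D_\edge$ and write, using \eqref{cons} to subtract a vanishing term,
\[
b_\edgesi(\bfu,\vi,\vi)=\sum_{\edge\in\edgesi}\Bigl(\sum_{\edged\in\edgesd(D_\edge)}|\edged|\,u_{\edge,\edged}\,(v^\ast_\edged - v_\edge)\,v_\edge\Bigr)
\]
— or, more robustly, I would use the classical finite-volume identity that upwinding produces a sum of squares: $\sum_{\edged}|\edged|\,u_{\edge,\edged}\,v^{\rm up}_\edged\,v_\edge = \tfrac12\sum_\edged|\edged|\,u_{\edge,\edged}(v_\edge^2+ \text{cross terms})$, where the defect relative to the centred flux is $\tfrac12\sum_{\edged=\edge|\edge'}|\edged|\,|u_{\edge,\edged}|\,(v_\edge-v_{\edge'})^2\ge 0$. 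Concretely, since for the centred flux $b_\edgesi(\bfu,\vi,\vi)=0$ by the first part, the upwind value differs from the centred value by $v^{\rm up}_\edged - v^{\rm c}_\edged = -\tfrac12\,\mathrm{sgn}(u_{\edge,\edged})\,(v_{\edge'}-v_\edge)$ with the convention matching \eqref{dual:unknown}, so
\[
b^{\rm up}_\edgesi(\bfu,\vi,\vi)=b^{\rm up}_\edgesi(\bfu,\vi,\vi)-b^{\rm c}_\edgesi(\bfu,\vi,\vi)
=\tfrac12\sum_{\edged=\edge|\edge'\in\edgesdinti}|\edged|\,|u_{\edge,\edged}|\,(v_\edge-v_{\edge'})^2\ \ge\ 0,
\]
after reordering and using conservativity to pair the two contributions of each interior dual face.

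The main obstacle I anticipate is bookkeeping rather than anything deep: one must be careful that the reordering from the cell-indexed sum \eqref{eq:conv-opispace} to the face-indexed sum is done consistently with a fixed orientation $\edged=\overrightarrow{\edge|\edge'}$, that the boundary dual faces carry no contribution (each internal velocity unknown that sits on the boundary has value zero, and on external dual faces of free type $u_{\edge,\edged}=0$), and that the telescoping term is genuinely $\sum_{\edge\in\edgesi} v_\edge w_\edge \sum_{\edged\in\edgesd(D_\edge)}|\edged|\,u_{\edge,\edged}$ so that \eqref{cons} applies verbatim. In the upwind case the only subtlety is getting the sign in $v^{\rm up}_\edged - v^{\rm c}_\edged$ right relative to the direction of $u_{\edge,\edged}$, which is exactly what makes the defect a nonnegative quadratic form; once the identity $b^{\rm up}_\edgesi(\bfu,\vi,\vi)=\tfrac12\sum_\edged|\edged|\,|u_{\edge,\edged}|\,(v_\edge-v_{\edge'})^2$ is established, nonnegativity is immediate.
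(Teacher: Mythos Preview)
Your approach is essentially the paper's: reorder to a dual-face sum via conservativity, use the discrete divergence-free condition \eqref{cons} to kill a telescoping term, and for the upwind case split off the centred flux so that the defect is $\tfrac12\sum_{\edged=\edge|\edge'}|\edged|\,|u_{\edge,\edged}|\,(v_\edge-v_{\edge'})^2\ge 0$. Two small bookkeeping slips: your algebraic identity should read $\tfrac12(v_\edge+v_{\edge'})(w_\edge-w_{\edge'})=\tfrac12(v_\edge w_\edge-v_{\edge'}w_{\edge'})+\tfrac12(v_{\edge'}w_\edge-v_\edge w_{\edge'})$ (plus, not minus, on the second group); and the remaining group $\tfrac12\sum_\edged|\edged|\,u_{\edge,\edged}(v_{\edge'}w_\edge-v_\edge w_{\edge'})$ is not literally $-b_\edgesi(\bfu,\wi,\vi)$ but is manifestly antisymmetric under $v\leftrightarrow w$, which is what immediately gives \eqref{eq:antB}. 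The paper organizes the centred case slightly differently---it subtracts $v_\edge$ inside the cell-indexed sum using \eqref{cons} before reordering---but the content is identical.
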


\begin{proof}
We mimick the computation performed in the continuous case.
At the discrete level and for the centred formulation of the convection term, we have, by a simple reordering of the sum:
\[
b_\edgesi(\bfu,\vi,\wi)
= \sum_{\edge\in\edgesinti} w_\edge \sum_{\edged=\edge|\edge'\in\edgesd(D_\edge)} |\edged|\ \frac{v_\edge+v_{\edge'}} 2 \ u_{\edge,\edged}
= \sum_{\edged=\edge|\edge' \in \edgesdinti} |\edged|\ \frac{v_\edge+v_{\edge'}} 2\ (w_\edge-w_{\edge'})\ u_{\edge,\edged}.
\]
This relation is just obtained by conservativity of the mass flux, by a process which may be seen as a discrete integration by parts, and we have seen that it may be written as a discrete analogue of \eqref{eq:cont-1} (Lemma \ref{lem:b_new_form}).
On the other hand, thanks to \eqref{cons} (\ie\ the discrete analogue of $\dive\,\bfu=0$), we have, for any face $\edge\in\edgesinti$:
\[
\sum_{\edged\in\edgesd(D_\edge)} |\edged|\ (-v_\edge)\ u_{\edge,\edged}
=-v_\edge \sum_{\edged\in\edgesd(D_\edge)} |\edged|\ u_{\edge,\edged}=0.
\]
Hence:
\begin{align*}
b_\edgesi(\bfu,\vi,\wi)
= \sum_{\edge\in\edgesinti} w_\edge \sum_{\edged=\edge|\edge'\in\edgesd(D_\edge)} |\edged|\ \frac{v_\edge+v_{\edge'}} 2 \ u_{\edge,\edged}
&
= \sum_{\edge\in\edgesinti} w_\edge \sum_{\edged=\edge|\edge'\in\edgesd(D_\edge)} |\edged|\ \frac{v_{\edge'}-v_\edge} 2 \ u_{\edge,\edged}
\\ &
= \sum_{\edged= \edge|\edge' \in \edgesdinti} |\edged|\ \frac{v_{\edge'}-v_\edge} 2\ (w_{\edge'}+w_\edge)\ u_{\edge,\edged}.
\end{align*}
This concludes the proof of \eqref{eq:antB} and \eqref{eq:buuuzero}.
In the upwind case, we have, for $\iinud$, $b_\edgesi(\bfu,\vi,\vi)=T_1+T_2$ with:
\[
T_1= \sum_{\edge\in\edgesinti} v_\edge \sum_{\edged\in\edgesd(D_\edge)} |\edged|\ v^{\mathrm c}_\edge\ u_{\edge,\edged},\qquad
T_2=\sum_{\edge\in\edgesinti} v_\edge \sum_{\edged\in\edgesd(D_\edge)} |\edged|\ (v^{\mathrm up}_\edge-v^{\mathrm c}_\edge)\ u_{\edge,\edged}.
\]
From \eqref{eq:antB}, we know that $T_1=0$.
By definition, for $\edged=\edge|\edge'$,
\[
u_{\edge,\edged}\ (v^{\mathrm up}_\edge-v^{\mathrm c}_\edge)= \frac 1 2\ u_{\edge,\edged}\ 
\begin{cases}
v_\edge-v_{\edge'} & \mbox{ if } u_{\edge,\edged}\geq 0,
\\
v_{\edge'}-v_\edge & \mbox{ if } u_{\edge,\edged}\leq 0,
\end{cases}\quad
= \frac 1 2\ |u_{\edge,\edged}|\ (v_\edge-v_{\edge'}).
\]
Thus, reordering the sums:
\[
T_2=\sum_{\edge\in\edgesinti} v_\edge \sum_{\edged=\edge|\edge'\in\edgesd(D_\edge)} |\edged|\ (v_\edge-v_{\edge'})\ |u_{\edge,\edged}|
=\sum_{\edged=\edge|\edge' \in \edgesdinti} |\edged|\ (v_\edge-v_{\edge'})^2\ |u_{\edge,\edged}|\geq 0.
\]
\end{proof}

In order to obtain an a priori estimate on the pressure, we introduce a so-called Fortin interpolation operator, \ie\ a continuous operator from $H^1_0(\Omega)^d$ to $\Hmesh$ (equipped with the discrete $H^1$-norm) which preserves the divergence. 
The following lemma is given in \cite[Theorem 1, case $q=2$]{gal-12-sta}, and we re-state it here with our notations for the sake of clarity. 
 
\begin{lemma}[Fortin interpolation operator] \label{lem:fortin}
Let $(\mesh,\edges)$ be a MAC grid of $\Omega$.
For $\bfv \in H^1_0(\Omega)^d$, we define $\widetilde{\mathcal P}_\edges \bfv$ by $\widetilde{\mathcal P}_\edges \bfv =(\widetilde{\mathcal P}_{\edges^{(1)}} v_1, \cdots, \widetilde{\mathcal P}_{\edges^{(d)}} v_d) \in \Hmesh$, where, for $\iinud$,
\begin{equation} \label{interp-moyenne}
\begin{array}{l|l}
\widetilde{\mathcal P}_\edgesi: 
&
H^1_0(\Omega) \longrightarrow \Hmeshizero
\\ & \displaystyle
\vi \longmapsto \widetilde{\mathcal P}_\edgesi \vi = \sum_{\edge \in \edgesi} v_\edge\ \characteristic_{D_\edge}
\mbox{ with } v_\edge = \frac 1 {|\edge|} \int_\edge \vi(\bfx) \dgammax,\ \edge\in\edgesi.
\end{array} \end{equation}
For $q\in L^2(\Omega)$, we define $\mathcal{P}_\mesh q \in L_\mesh$ by:
\begin{equation} \label{Pmesh}
\mathcal{P}_\mesh q (\bfx) = \frac 1 {|K|} \int_K q (\bfx) \dx.
\end{equation}
Let $\bfvarphi\in (H_0^1(\Omega))^d$.
Then:
\begin{equation} \label{conserv-div-interp}
\dive_\mesh\,(\widetilde{\mathcal P}_\edges \bfvarphi) ={\mathcal P}_\mesh (\dive \bfvarphi).
\end{equation}
In particular, if $\dive \bfvarphi=0$, then $\dive_\mesh\,(\widetilde{\mathcal P}_\edges \bfvarphi) = 0$.
In addition, there exists a real number $C_{\eta_\mesh}$, depending only on $\Omega$ and, in a non-decrasing way, on $\eta_\mesh$ defined by \eqref{regmesh}, such that:
\begin{equation} \label{norme-h1-interp}
\|\widetilde {\mathcal P}_\edges \bfvarphi\|_{1,\edges,0} \leq C_{\eta_\mesh}\ \|\bfnabla\bfvarphi\|_{L^2(\Omega)^{d \times d}}.
\end{equation}
\end{lemma}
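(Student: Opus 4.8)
The statement to prove is the Fortin interpolation lemma (Lemma~\ref{lem:fortin}): the divergence-preservation identity \eqref{conserv-div-interp} and the $H^1_0$-stability bound \eqref{norme-h1-interp}.

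\medskip

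\emph{Plan for the divergence identity \eqref{conserv-div-interp}.}
I would start from the definition \eqref{eq:div} of $\dive_\mesh$ applied to $\widetilde{\mathcal P}_\edges \bfvarphi$. For a primal cell $K$, the value of $\dive_\mesh (\widetilde{\mathcal P}_\edges \bfvarphi)$ on $K$ is $\frac{1}{|K|} \sum_{\edge \in \edges(K)} |\edge|\, v_{K,\edge}$, where, for $\edge \in \edgesi \cap \edges(K)$, $v_{K,\edge} = v_\edge\, (\nKedge \cdot \bfe\ei)$ and $v_\edge = \frac{1}{|\edge|} \int_\edge \varphi_i \dgammax$ by \eqref{interp-moyenne}. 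Hence $|\edge|\, v_{K,\edge} = (\nKedge \cdot \bfe\ei) \int_\edge \varphi_i \dgammax = \int_\edge \bfvarphi \cdot \nKedge \dgammax$, the last equality because on a face orthogonal to $\bfe\ei$ the outward normal $\nKedge$ is $\pm \bfe\ei$, so $\bfvarphi \cdot \nKedge = \varphi_i (\nKedge \cdot \bfe\ei)$. Summing over $\edge \in \edges(K)$ and applying the (exact, since the faces are flat and the geometry is Cartesian) divergence theorem on $K$ gives $\sum_{\edge \in \edges(K)} |\edge|\, v_{K,\edge} = \int_K \dive \bfvarphi \dx$. Dividing by $|K|$ yields exactly ${\mathcal P}_\mesh(\dive \bfvarphi)$ on $K$ by \eqref{Pmesh}, which is \eqref{conserv-div-interp}. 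The case $\dive\bfvarphi = 0$ follows immediately.

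\medskip

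\emph{Plan for the stability bound \eqref{norme-h1-interp}.}
By \eqref{normfull} it suffices to bound each $\|\widetilde{\mathcal P}_\edgesi \varphi_i\|^2_{1,\edgesi,0}$ by $C_{\eta_\mesh} \|\bfnabla \varphi_i\|^2_{L^2(\Omega)^d}$, and by \eqref{normi} each such norm is a sum over internal dual faces $\edged = \edge|\edge'$ of $\frac{|\edged|}{d_\edged}(v_\edge - v_{\edge'})^2$ plus a boundary contribution $\frac{|\edged|}{d_\edged} v_\edge^2$. The key point is to estimate the difference $v_\edge - v_{\edge'}$ of two face-averages of $\varphi_i$ in terms of the gradient of $\varphi_i$ on the slab $D_\edged$ joining $\edge$ and $\edge'$. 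I would write $v_\edge - v_{\edge'}$ as an average over $\edged$ (or over the pair of half-faces) of the increment of $\varphi_i$ along the line segment of length $d_\edged$ in the $\bfe\ej$ direction (where $\edged \perp \bfe\ej$), i.e.\ use a one-dimensional fundamental-theorem-of-calculus representation combined with the Cartesian product structure $D_\edged = \edged \times [\bfx_\edge\,\bfx_{\edge'}]$, then apply Cauchy--Schwarz/Jensen to get $(v_\edge - v_{\edge'})^2 \le \frac{d_\edged}{|\edged|}\int_{D_\edged} |\partial_j \varphi_i|^2 \dx$ up to a constant controlled by $\eta_\mesh$ (the constant accounts for the fact that $D_\edged$ need not be exactly the cylinder over $\edged$ when $\edge$ and $\edge'$ have different sizes in the $j=i$ case, and for the half-face situation when $j \ne i$; this is where mesh regularity enters). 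Multiplying by $\frac{|\edged|}{d_\edged}$ cancels the geometric prefactor and leaves $\int_{D_\edged} |\partial_j \varphi_i|^2 \dx$, and since the $D_\edged$ form a partition of $\Omega$ (for fixed $i,j$, over all dual faces orthogonal to $\bfe\ej$), summing over $\edged$ and over $j$ gives $\|\bfnabla \varphi_i\|^2_{L^2(\Omega)^d}$ times a constant depending only on $\eta_\mesh$ in a non-decreasing way. The boundary terms $\frac{|\edged|}{d_\edged}v_\edge^2$ are handled by the same argument using that $\varphi_i$ vanishes on $\partial\Omega$, so $v_\edge$ itself is an increment of $\varphi_i$ from the boundary.

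\medskip

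\emph{Main obstacle.}
The divergence identity is essentially bookkeeping and the exact divergence theorem on rectangles, so it is routine. The real work is the trace-difference estimate behind \eqref{norme-h1-interp}: one must bound a difference of boundary averages of $\varphi_i$ on two parallel faces by a volume integral of its gradient, which requires care because a face-average is not a point value (so a plain 1D argument must be averaged transversally over $\edged$) and because the control volume $D_\edged$ attached to the dual face may be geometrically anisotropic or built from half-faces, forcing the regularity parameter $\eta_\mesh$ into the constant. Everything else—summation by the partition property, Cauchy--Schwarz—is standard. Since Lemma~\ref{lem:fortin} is quoted from \cite[Theorem 1, case $q=2$]{gal-12-sta}, in the paper it is enough to restate it and refer to that source for the proof, but the sketch above is how the argument there proceeds.
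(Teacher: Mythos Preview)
The paper itself does not prove Lemma~\ref{lem:fortin}: it only restates the result and refers to \cite[Theorem~1, case $q=2$]{gal-12-sta} for the proof. You correctly note this, so there is no ``paper's proof'' to compare against; your sketch is the only proof on the table.

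Your argument for the divergence identity \eqref{conserv-div-interp} is correct and complete: the face-average definition makes $|\edge|\,v_{K,\edge}$ equal to the exact normal flux $\int_\edge \bfvarphi\cdot\nKedge\dgammax$, and the divergence theorem on the rectangular cell $K$ does the rest.

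For the stability bound \eqref{norme-h1-interp}, your overall strategy (bound each $\frac{|\edged|}{d_\edged}(v_\edge-v_{\edge'})^2$ by a local gradient integral, then sum using the partition property) is the right one, and it is essentially what is done in the cited reference. Two points of imprecision are worth flagging. First, you swapped the two cases: when $j=i$, the faces $\edge,\edge'$ are opposite faces of the \emph{same} primal cell and therefore have equal measure, so $D_\edged$ is exactly the cylinder over $\edged$ and the one-dimensional FTC plus Cauchy--Schwarz gives the bound with constant~$1$; it is when $j\neq i$ that $\edge$ and $\edge'$ may have different measures and that mesh regularity enters. Second, in that $j\neq i$ case the faces $\edge,\edge'\in\edgesi$ are hyperplanar pieces perpendicular to $\bfe\ei$, not to $\bfe\ej$, so they are not the end-caps of the cylinder $D_\edged=\edged\times[\bfx_\edge\,\bfx_{\edge'}]$; a plain one-dimensional FTC in the $\bfe\ej$ direction between the end-caps does not directly control $v_\edge-v_{\edge'}$. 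One needs an additional averaging or trace step to pass from the face integrals over $\edge,\edge'$ to a volume integral over $D_\edged$, and this typically brings in more than the single derivative $\partial_j\varphi_i$. This is the genuine technical content of the cited lemma; your sketch gestures at it (``this is where mesh regularity enters'') without quite saying what the extra step is.

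None of this invalidates your plan; it is the correct outline, and the paper is content to cite the result rather than reproduce these details.
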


\bigskip
\begin{theorem}[Existence and estimates] \label{prop:est}
There exists a solution to \eqref{eq:weak}, and there exists $C_{\eta_\mesh} >0$ depending only on $\Omega$ and, in a non-decreasing way, on the parameter $\eta_\mesh$ characterizing the regularity of the mesh, such that any solution of \eqref{eq:weak} satisfies the following stability estimate:
\begin{equation}
\|\bfu\|_{1,\edges,0} + \|p\|_{L^2(\Omega)} \leq C_{\eta_\mesh} \ \|\bar \bff\|_{L^2(\Omega)^d}. 
\label{estimate}
\end{equation}
\end{theorem}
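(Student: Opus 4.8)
The plan is to first derive, for an arbitrary solution of \eqref{eq:weak}, the two bounds making up \eqref{estimate}, and then to deduce the existence of a discrete solution from the a priori estimate by a Brouwer-type argument on the finite-dimensional space $\bfE_\edges(\Omega)$. For the velocity bound I would take $\bfv=\bfu$ and $q=p$ in \eqref{eq:weak}: the pressure coupling term $\int_\Omega p\,\dive_\mesh\bfu\dx$ vanishes because $\dive_\mesh\bfu=0$; the convection term $b_\edges(\bfu,\bfu,\bfu)=\sum_{i=1}^d b_{\edges^{(i)}}(\bfu,u_i,u_i)$ is nonnegative --- indeed zero for the centred choice --- by Lemma~\ref{lem:skew-sym}, which applies since $\bfu\in\bfE_\edges(\Omega)$; and the diffusion term is $\int_\Omega\bfnabla_\edgesd\bfu:\bfnabla_\edgesd\bfu\dx=\|\bfu\|_{1,\edges,0}^2$. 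Hence $\|\bfu\|_{1,\edges,0}^2\le\int_\Omega\bff\cdot\bfu\dx\le\|\bff\|_{L^2(\Omega)^d}\|\bfu\|_{L^2(\Omega)^d}$. Since the cell-mean operator $\mathcal P_\edges$ is an $L^2$-contraction we have $\|\bff\|_{L^2}\le\|\bar\bff\|_{L^2}$, and the discrete Poincar\'e inequality \cite{book} controls $\|\bfu\|_{L^2(\Omega)^d}$ by $C_\Omega\|\bfu\|_{1,\edges,0}$; we conclude $\|\bfu\|_{1,\edges,0}\le C_\Omega\|\bar\bff\|_{L^2(\Omega)^d}$.

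For the pressure bound I would invoke the Fortin operator $\widetilde{\mathcal P}_\edges$ of Lemma~\ref{lem:fortin}. Given $p\in L_{\mesh,0}$, the classical surjectivity of the divergence from $H^1_0(\Omega)^d$ onto $L^2_0(\Omega)$ yields $\bfvarphi\in H^1_0(\Omega)^d$ with $\dive\bfvarphi=p$ and $\|\bfnabla\bfvarphi\|_{L^2(\Omega)^{d\times d}}\le C_\Omega\|p\|_{L^2}$. Taking $\bfv=\widetilde{\mathcal P}_\edges\bfvarphi\in\Hmeshzero$ in \eqref{eq:mac_qdm_weak} and noting that $\dive_\mesh(\widetilde{\mathcal P}_\edges\bfvarphi)=\mathcal P_\mesh(\dive\bfvarphi)=\mathcal P_\mesh p=p$ by \eqref{conserv-div-interp} (the last equality because $p\in L_\mesh$), the pressure coupling term is $\int_\Omega p^2\dx=\|p\|_{L^2(\Omega)}^2$; estimating the remaining three terms of \eqref{eq:mac_qdm_weak} by Cauchy--Schwarz, the bound \eqref{eq:contB} on $b_\edges$, the discrete Poincar\'e inequality and the Fortin stability \eqref{norme-h1-interp}, every occurring factor $\|\widetilde{\mathcal P}_\edges\bfvarphi\|_{1,\edges,0}$ (or its $L^2$-norm) becomes $C_{\eta_\mesh}\|p\|_{L^2}$, and dividing by $\|p\|_{L^2}$ gives $\|p\|_{L^2(\Omega)}\le C_{\eta_\mesh}\bigl(\|\bfu\|_{1,\edges,0}+\|\bfu\|_{1,\edges,0}^2+\|\bar\bff\|_{L^2(\Omega)^d}\bigr)$. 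Inserting the velocity bound yields \eqref{estimate}. This step is the delicate one: it rests on having a divergence-preserving, $H^1$-stable interpolation operator available (Lemma~\ref{lem:fortin}, itself the nontrivial result of \cite{gal-12-sta}), and on carefully tracking the quadratic-in-$\bfu$ contribution of the convection term, which is what makes the pressure bound depend nonlinearly on the data before the velocity bound is used.

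Finally, for existence I would first reduce to the divergence-free space: a pair $(\bfu,p)$ solves \eqref{eq:weak} if and only if $\bfu\in\bfE_\edges(\Omega)$ satisfies
\[
\int_\Omega\bfnabla_\edgesd\bfu:\bfnabla_\edgesd\bfv\dx+b_\edges(\bfu,\bfu,\bfv)=\int_\Omega\bff\cdot\bfv\dx\qquad\forall\bfv\in\bfE_\edges(\Omega),
\]
the pressure being then recovered: the linear form $\bfv\mapsto\int_\Omega\bfnabla_\edgesd\bfu:\bfnabla_\edgesd\bfv\dx+b_\edges(\bfu,\bfu,\bfv)-\int_\Omega\bff\cdot\bfv\dx$ vanishes on $\ker\dive_\mesh=\bfE_\edges(\Omega)$, and since $\dive_\mesh$ maps $\Hmeshzero$ onto $L_{\mesh,0}$ (a byproduct of Lemma~\ref{lem:fortin}) while $p\mapsto\bigl(\bfv\mapsto\int_\Omega p\,\dive_\mesh\bfv\dx\bigr)$ is injective on $L_{\mesh,0}$, a dimension count provides a unique $p\in L_{\mesh,0}$ representing this form, and Lemma~\ref{lem:duality} turns it into the strong momentum equation. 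To solve the reduced problem on the finite-dimensional inner-product space $\bigl(\bfE_\edges(\Omega),[\cdot,\cdot]_{1,\edges,0}\bigr)$, I would consider the continuous map $F$ defined by
\[
[F(\bfu),\bfv]_{1,\edges,0}=\int_\Omega\bfnabla_\edgesd\bfu:\bfnabla_\edgesd\bfv\dx+b_\edges(\bfu,\bfu,\bfv)-\int_\Omega\bff\cdot\bfv\dx ;
\]
the velocity-estimate computation gives $[F(\bfu),\bfu]_{1,\edges,0}\ge\|\bfu\|_{1,\edges,0}^2-C_\Omega\|\bar\bff\|_{L^2}\|\bfu\|_{1,\edges,0}$, which is positive on the sphere $\|\bfu\|_{1,\edges,0}=R$ as soon as $R>C_\Omega\|\bar\bff\|_{L^2}$, so the classical consequence of Brouwer's fixed-point theorem yields a zero of $F$, hence a solution of \eqref{eq:weak}.
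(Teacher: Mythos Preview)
Your derivation of the velocity and pressure bounds is essentially the same as the paper's: take $\bfv=\bfu$ in \eqref{eq:mac_qdm_weak} and use Lemma~\ref{lem:skew-sym} together with the discrete Poincar\'e inequality to get \eqref{esti-vit-station}; then lift a solution of $\dive\bfvarphi=p$ through the Fortin operator of Lemma~\ref{lem:fortin}, test with $\widetilde{\mathcal P}_\edges\bfvarphi$, and bound the convection term by \eqref{eq:contB}. Nothing is missing here.

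Where you genuinely diverge from the paper is in the existence argument. The paper works on the full product space $\Hmeshzero\times L_{\mesh,0}$: it introduces a homotopy $F(\bfu,p,\zeta)$, $\zeta\in[0,1]$, connecting the linear Stokes operator ($\zeta=0$, known to be bijective) to the Navier--Stokes operator ($\zeta=1$), checks that the a~priori estimates hold uniformly in $\zeta$, and concludes by a topological degree argument (citing \cite{Deimling,eym-98-err,eym-07-con}). You instead reduce first to the discretely divergence-free subspace $\bfE_\edges(\Omega)$, apply the Brouwer corollary directly (the map $F$ is continuous by \eqref{eq:contB}, and $[F(\bfu),\bfu]_{1,\edges,0}\ge\|\bfu\|_{1,\edges,0}^2-C_\Omega\|\bar\bff\|_{L^2}\|\bfu\|_{1,\edges,0}$ uses exactly the nonnegativity statement of Lemma~\ref{lem:skew-sym}), and then recover the pressure by the surjectivity of $\dive_\mesh$ onto $L_{\mesh,0}$ combined with a rank argument. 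This is the classical continuous strategy transposed to the discrete setting, and it is perfectly valid; it is arguably more elementary since it avoids degree theory. The paper's route, on the other hand, handles velocity and pressure simultaneously and exploits the already-available stability of the discrete Stokes problem as the $\zeta=0$ anchor, which is a natural choice when that result is at hand.
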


\begin{proof}
Let us start by an {\it a priori} estimate on the approximate velocity. 
Assume that $(\bfu,p)\in \Hmeshzero \times L_{\mesh,0}$ satisfies \eqref{eq:scheme}; taking $\bfv=\bfu$ in \eqref{eq:mac_qdm_weak} we get that:
\[
\|\bfu\|_{1,\edges,0}^2=\int_\Omega p \ \dive_\mesh \, \bfu\dx -b_\edges(\bfu,\bfu,\bfu)
+ \int_\Omega \bff \cdot \bfu \dx.
\]
Since $\dive_\mesh\, \bfu =0$ and $b_\edges(\bfu,\bfu,\bfu)=0$, this yields that 
\begin{equation} \label{esti-vit-station}
\|\bfu\|_{1,\edges,0}\leq \diam(\Omega)\ \|\bar \bff\|_{(L^2)^d},
\end{equation}
thanks to the fact that $\| \bff\|_{(L^2(\Omega))^d} \le \|\bar \bff\|_{(L^2(\Omega))^d}$ and to the discrete Poincar\'e inequality \cite[Lemma 9.1]{book}.

\medskip 
An {\it a priori} estimate on the pressure is obtained by remarking as in \cite{shi-97-inf} that the MAC scheme is {\em inf-sup} stable, which is a consequence of the existence of a Fortin operator. 
Indeed, since $p\in L_0^2(\Omega)$, there exists $\bfvarphi\in H_0^1(\Omega)^d$ such that $\dive \bfvarphi=p$ a.e. in $\Omega$ and 
\begin{equation} \label{cNecas}
\|\bfvarphi\|_{H_0^1(\Omega)^d}\leq C_1\ \|p\|_{L^2(\Omega)},
\end{equation}
where $C_2$ depends only on $\Omega$ \cite{nec-67-met}. 
Taking $\bfv =\widetilde{\mathcal P}_\edges \bfvarphi$ (defined by \eqref{interp-moyenne}) as test function in \eqref{eq:mac_qdm_weak}, we obtain thanks to Lemma \ref{lem:fortin} that
\[
[\bfu, \bfv]_{1,\edges,0} + b_\edges(\bfu,\bfu, \bfv)-\int_\Omega p^2 \dx= \int_\Omega \bff \cdot \bfv \dx.
\]
Thanks to the estimate \eqref{eq:contB} on $b_\edges$ and the Cauchy-Schwarz inequality we get:
\[
\|p\|_{L^2(\Omega)}^2
\leq \|\bfu\|_{1,\edges,0}\|\ \bfv\|_{1,\edges,0}
+C_2\ \|\bfu\|_{1,\edges,0}^2 \|\bfv\|_{1,\edges,0} 
+\|\bar \bff\|_{L^2(\Omega)^d}\|\ \bfv\|_{L^2(\Omega)^d},
\]
where the real number $C_2$ is a non-decreasing function of $\eta_\mesh$.
This yields 
\begin{equation} \label{esti-pres-stionn:b}
\|p\|_{L^2}\leq C_{\eta_\mesh} \| \bar \bff \|_{L^2(\Omega)^d},
\end{equation}
with $C_{\eta_\mesh}$ non-decreasing with respect to $\eta_\mesh$, thanks to \eqref{norme-h1-interp}, \eqref{cNecas} and to the estimate \eqref{esti-vit-station}.

\medskip 
Let us now prove the existence of a solution to \eqref{eq:weak}.
Consider the continuous mapping:
\[
\begin{array}{l|ccl}
F: \quad & \Hmeshzero\times L_{\mesh,0}\times [0,1] & \longrightarrow & \Hmeshzero\times L_{\mesh,0},
\\[1ex] &
(\bfu,p,\zeta) & \mapsto & F(\bfu,p,\zeta) = (\hat{\bfu},\hat{p}),
\end{array}
\]
where $(\hat{\bfu},\hat{p}) \in \Hmeshzero\times L_{\mesh,0}$ is such that:
\begin{subequations}
\begin{align} \label{F1} &
\int_\Omega \hat{\bfu}\cdot\bfv \dx = [\bfu,\bfv]_{1,\edges,0}+\zeta \ b_\edges(\bfu,\bfu,\bfv)
-\int_\Omega \ p \ \dive_\mesh\, \bfv \dx - \int_\Omega \bff\cdot\bfv \dx,\quad \forall\bfv\in\Hmeshzero,
\\ \label{F2} &
\int_\Omega \hat{p} \ q \dx = \int_\Omega \dive_\mesh\, \bfu \ q \dx,\quad \forall q \in L_\mesh.
\end{align} \end{subequations}
It is easily checked that $F$ is well defined, since the values of $\hat{u}_i,\ \iinud$, and $\hat{p}$ are readily obtained by setting, for $\iinud$ and $\edge \in \edgesinti$, $\vi=\characteristic_{D_\edge}$, $\vj=0,\ j\ne i$ in \eqref{F1} and $q=\characteristic_K$ in \eqref{F2}.
We also note that the constraint $\hat{p} \in L_{\mesh,0}$ is satisfied, thanks to the boundary conditions on $\bfu$ (choose $q=1$ in \eqref{F2}). 
The mapping $F$ is continuous; moreover, if $(\bfu,p)\in \Hmeshzero\times L_{\mesh,0}$ is such that $F(\bfu,p,\zeta)=(0,0)$, then for any $ (\bfv,q) \in \Hmeshzero \times L_\mesh$,
\begin{subequations}\label{weak:ro}
\begin{align*} &
[\bfu,\bfv]_{1,\edges,0} +\zeta \ b_\edges(\bfu, \bfu, \bfv) - \int_\Omega p\, \dive_\mesh\,\bfv \dx = \int_\Omega \bff \cdot \bfv \dx,
\\ &
\int_\Omega \dive_\mesh\, \bfu\ q \dx =0.
\end{align*}
\end{subequations}
The arguments used in the above estimates on possible solutions of \eqref{eq:weak} may be used in a similar way to show that such a pair $(\bfu,p)$ is bounded independently of $\zeta$.
Since $F(\bfu,p,0)=0$ is a bijective affine function by the stability of the linear Stokes problem (see \cite{bla-05-con}), the existence of at least one solution $(\bfu,p)$ to the equation $F(\bfu,p,1)=0$, which is exactly \eqref{eq:weak}, follows by a topological degree argument (see \cite{Deimling} for the theory, \cite{eym-98-err} for the first application to a nonlinear scheme and \cite[Theorem 4.3]{eym-07-con} for an easy formulation of the result which can be used here).
\end{proof}
%
%----------------------------------------------------------------------------------------------
%
\subsection{Convergence analysis}

\begin{lemma}[Convergence of the velocity reconstructions]\label{lem:conv-vel-reco}
Let $(\mesh_n,\edges_n)_\nnn$ be a sequence of MAC meshes such that $h_{\mesh_n} \to 0$ as $\nti$~; assume that there exists $\eta >0$ such that $ \eta_{\mesh_n} \le \eta$ for any $n\in \xN$ (with $\eta_{\mesh_n} $ defined by \eqref{regmesh}).
Let $i,j \in \llbracket 1, d\rrbracket$, let $\bar v \in L^2(\Omega)$, and let $(v_n)_\nnn$ be such that $v_n \in H_{\edgesi_n,0}$ and $v_n$ converges to $\bar v$ as $\nti$ in $L^2(\Omega)$.
Let $\mathcal R_{\edgesd_n}^{(i,j)}$ be a velocity reconstruction operator, in the sense of Definition \eqref{def:vel-reco}.\\[1ex]
Then $\mathcal R_{\edgesd_n}^{(i,j)} v_n \to \bar v $ in $L^2(\Omega)$ as $\nti$.
\end{lemma}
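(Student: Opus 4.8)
\emph{The plan.} I would prove the result by combining the uniform $L^2$-stability of the reconstruction operators with a density argument. First, note that $\mathcal R_{\edgesd_n}^{(i,j)}$ is linear (its coefficients $\alpha_\edged$ depend only on the mesh), and that by Lemma~\ref{lem:stab-vel-reco} applied with $p=2$ there is a constant $C_\eta$, depending only on $\eta$, such that $\|\mathcal R_{\edgesd_n}^{(i,j)} w\|_{L^2(\Omega)}\le C_\eta\,\|w\|_{L^2(\Omega)}$ for every $n$ and every $w\in H_{\edges_n^{(i)},0}$ (here we use that the constant of Lemma~\ref{lem:stab-vel-reco} is non-decreasing in the regularity parameter, which is bounded above by $\eta$). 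Fix $\varepsilon>0$ and pick $\varphi\in C_c^\infty(\Omega)$ with $\|\bar v-\varphi\|_{L^2(\Omega)}\le\varepsilon$. I would then introduce the discrete interpolate $\varphi_n:=\mathcal P_{\edges_n^{(i)}}\varphi\in H_{\edges_n^{(i)},0}$ built from the cell-mean operator \eqref{interpedges}; since $\mathrm{supp}\,\varphi$ is a compact subset of $\Omega$ and $h_{\mesh_n}\to0$, standard approximation estimates give $\|\varphi_n-\varphi\|_{L^\infty(\Omega)}\to0$, hence $\varphi_n\to\varphi$ in $L^2(\Omega)$ as well.

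\emph{The splitting.} Using linearity of $\mathcal R_{\edgesd_n}^{(i,j)}$ and the triangle inequality, I would write
\[
\|\mathcal R_{\edgesd_n}^{(i,j)} v_n-\bar v\|_{L^2(\Omega)}\le
\|\mathcal R_{\edgesd_n}^{(i,j)}(v_n-\varphi_n)\|_{L^2(\Omega)}
+\|\mathcal R_{\edgesd_n}^{(i,j)}\varphi_n-\varphi\|_{L^2(\Omega)}
+\|\varphi-\bar v\|_{L^2(\Omega)}.
\]
The last term is $\le\varepsilon$ by the choice of $\varphi$. For the first term, the stability bound gives $\|\mathcal R_{\edgesd_n}^{(i,j)}(v_n-\varphi_n)\|_{L^2(\Omega)}\le C_\eta\,\|v_n-\varphi_n\|_{L^2(\Omega)}$, and $\|v_n-\varphi_n\|_{L^2(\Omega)}\le\|v_n-\bar v\|_{L^2(\Omega)}+\|\bar v-\varphi\|_{L^2(\Omega)}+\|\varphi-\varphi_n\|_{L^2(\Omega)}$; since $v_n\to\bar v$ and $\varphi_n\to\varphi$ in $L^2(\Omega)$, the $\limsup$ over $\nti$ of the first term is at most $C_\eta\,\varepsilon$. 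It remains to show that the middle term tends to $0$.

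\emph{The smooth term and conclusion.} For the middle term I would establish a consistency estimate for the reconstruction of the interpolate of a smooth function. On a cell $D_\edged$ of the $(i,j)^{th}$ partition, the value $(\mathcal R_{\edgesd_n}^{(i,j)}\varphi_n)_{D_\edged}$ is a convex combination of the (at most two) numbers $(\varphi_n)_\edge$, $(\varphi_n)_{\edge'}$ carried by the faces of $D_\edged$, each of which is a mean value of $\varphi$ over a dual cell ($D_\edge$, resp. $D_{\edge'}$) that, together with $D_\edged$ itself, is contained in a ball of radius $C\,h_{\mesh_n}$ (because $\diam(D_\edge)\le h_{\mesh_n}$, the cells $D_\edge$ and $D_{\edge'}$ are adjacent, and $D_\edged$ straddles their common face $\edged$); by the Lipschitz continuity of $\varphi$ this yields $|(\mathcal R_{\edgesd_n}^{(i,j)}\varphi_n)(\bfx)-\varphi(\bfx)|\le C_\varphi\,h_{\mesh_n}$ for a.e.\ $\bfx\in\Omega$, the cells touching $\partial\Omega$ contributing nothing since there both $\varphi$ and $\varphi_n$ vanish for $n$ large. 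Hence $\|\mathcal R_{\edgesd_n}^{(i,j)}\varphi_n-\varphi\|_{L^\infty(\Omega)}\le C_\varphi\,h_{\mesh_n}\to0$, and, $\Omega$ being bounded, the middle term tends to $0$. Collecting the three bounds, $\limsup_{\nti}\|\mathcal R_{\edgesd_n}^{(i,j)} v_n-\bar v\|_{L^2(\Omega)}\le(C_\eta+1)\,\varepsilon$ for every $\varepsilon>0$, whence the claim. The only genuinely technical step is the geometric bookkeeping just described — namely, that every discrete value entering the reconstruction on $D_\edged$ is attached to a region lying within $O(h_{\mesh_n})$ of $D_\edged$ — which follows from the explicit description of the $(i,j)^{th}$ partitions in Section~\ref{sec:discop} together with $\diam(D_\edged)\le 2\,h_{\mesh_n}$; everything else is routine.
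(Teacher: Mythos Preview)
Your proof is correct and follows essentially the same approach as the paper: a density argument combining the uniform $L^2$-stability of $\mathcal R_{\edgesd_n}^{(i,j)}$ (Lemma~\ref{lem:stab-vel-reco}) with the consistency $\mathcal R_{\edgesd_n}^{(i,j)}\circ\mathcal P_{\edges_n^{(i)}}\varphi\to\varphi$ for smooth $\varphi$. The paper uses a four-term splitting (inserting $\mathcal R_n\mathcal P_n\bar v$ and $\mathcal R_n\mathcal P_n\varphi$ and invoking that $\mathcal P_n$ is an $L^2$-contraction), while you use a three-term splitting and then expand $\|v_n-\varphi_n\|_{L^2}$ further; after simplification both arrive at the same three controlling quantities. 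Your treatment of the ``smooth term'' is in fact more explicit than the paper's, which simply asserts that $\|\mathcal R_n\circ\mathcal P_n\varphi-\varphi\|_{L^2(\Omega)}$ can be made small for large $n$ without spelling out the $O(h_{\mesh_n})$ uniform bound.
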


\begin{proof}
Let $i,j \in \llbracket 1, d\rrbracket$.
Denoting $\mathcal R_{\edgesd_n}^{(i,j)}$ by $\mathcal R_n$ and $\mathcal P_{\edges\ei_n}$ (defined by \eqref{interpedges}) by $\mathcal P_n$ for short, we have, for any $\varphi \in C_c^\infty(\Omega)$:
\[
\|\mathcal R_n v_n - \bar v \|_{L^2(\Omega)} \le
\|\mathcal R_n v_n -\mathcal R_n \circ \mathcal P_n \bar v \|_{L^2(\Omega)} +
\|\mathcal R_n \circ \mathcal P_n \bar v - \mathcal R_n \circ \mathcal P_n \varphi \|_{L^2(\Omega)} +
\|\mathcal R_n \circ \mathcal P_n \varphi - \varphi\|_{L^2(\Omega)} +
\|\varphi - \bar v\|_{L^2(\Omega)}.
\]
Since $\mathcal R_n v_n = \mathcal R_n \circ \mathcal P_n v_n$, and thanks to the fact that $\| \mathcal R_n \|_{L^2(\Omega)}$ is bounded (see Lemma \ref{lem:stab-vel-reco}) and that $\mathcal P_n$ is an $L^2$-orthogonal projection, we get that there exists $C \ge 0$ such that
\[
\|\mathcal R_n v_n - \bar v \|_{L^2(\Omega)} \le
C \| v_n - \bar v \|_{L^2(\Omega)} +
C \| \bar v - \varphi \|_{L^2(\Omega)} +
\| \mathcal R_n \circ \mathcal P_n \varphi - \varphi \|_{L^2(\Omega)} +
\| \varphi - \bar v \|_{L^2(\Omega)}.
\]
Let $\varepsilon >0$.
Let us choose $\varphi \in C_c^\infty(\Omega)$ such that $\| \mathcal \varphi - \bar v \|_{L^2(\Omega)} \le \varepsilon/(C+1)$.
There exists $n_1$ such that $C \| v_n - \bar v \|_{L^2(\Omega)} \le \varepsilon, \, \forall n \ge n_1$, and there exists $n_2$ such that $\| \mathcal R_n \circ \mathcal P_n \varphi - \varphi \|_{L^2(\Omega)} \le \varepsilon, \, \forall n \ge n_2$.
Therefore $\| \mathcal R_n v_n - \bar v \|_{L^2(\Omega)} \le 3 \varepsilon$ for $n \ge \max(n_1,n_2)$, which concludes the proof.
\end{proof}

\begin{lemma}[Weak consistency of the nonlinear convection term] \label{lem:convconv}
Let $(\mesh_n,\edges_n)_\nnn$ be a sequence of meshes such that $h_{\mesh_n} \to 0$ as $n \to +\infty$~; assume that there exists $\eta >0$ such that $ \eta_{\mesh_n} \le \eta$ for any $n\in \xN$ (with $\eta_{\mesh_n} $ defined by \eqref{regmesh}).
Let $(\bfv_n)_\nnn$ and $(\bfw_n)_\nnn$ be two sequences of functions such that
\begin{list}{-}{\itemsep=0.ex \topsep=0.5ex \leftmargin=1.cm \labelwidth=0.7cm \labelsep=0.3cm \itemindent=0.cm}
\item $\bfv_n \in \Hmeshnzero$ and $\bfw_n \in \Hmeshnzero$, for $n \in \xN$,
\item the sequences $(\bfv_n)_\nnn$ and $(\bfw_n)_\nnn$ converge in $L^2(\Omega)^d$ to $\bar \bfv$ and $\bar \bfw$ respectively.
\end{list}
Let $(\Pi_{\edges_n})_\nnn$ be a family of interpolation operators satisfying \eqref{piedges} and let $\bfvarphi \in C_c^\infty(\Omega)^d$.\\[1ex]
Then
\[
b_\edges(\bfv_n, \bfw_n, \Pi_{\edges_n} \bfvarphi) \to 
b(\bar \bfv, \bar \bfw, \bfvarphi) = -\sum_{i=1}^d \int_\Omega \bar\wi\, \bar\bfv \cdot \bfnabla \varphi_i \dx
\quad \mbox{as }\nti.
\]
\end{lemma}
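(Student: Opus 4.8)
The plan is to rewrite the discrete convection form by means of the reformulation of Lemma~\ref{lem:b_new_form}, so that only reconstructed velocity components and discrete partial derivatives of an interpolated smooth field are left to pass to the limit, and then to combine these convergences in a triple product.

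First I would fix $\iinud$ and apply Lemma~\ref{lem:b_new_form} to the triple $(\bfv_n,\bfw_n,\Pi_{\edges_n}\bfvarphi)$, that is, with $\bfv_n$ as convecting field, $\bfw_n$ as convected field and $\Pi_{\edges_n}\bfvarphi$ as test function. This produces, for each $j\in\llbracket 1,d\rrbracket$, reconstruction operators $\mathcal R_{\edgesd_n}^{(i,j)}$ and $\mathcal R_{\edgesd_n}^{(j,i)}$ in the sense of Definition~\ref{def:vel-reco} (depending \emph{a priori} on $n$ and on $\bfv_n$ through the upwinding) such that
\[
b_{\edges_n^{(i)}}\bigl(\bfv_n,(w_n)_i,\Pi_{\edges_n^{(i)}}\varphi_i\bigr)
= -\sum_{j=1}^d \int_\Omega \bigl(\mathcal R_{\edgesd_n}^{(i,j)}(w_n)_i\bigr)\ \bigl(\mathcal R_{\edgesd_n}^{(j,i)}(v_n)_j\bigr)\ \eth_j\bigl(\Pi_{\edges_n^{(i)}}\varphi_i\bigr)\dx.
\]
Since $\bfv_n\to\bar\bfv$ and $\bfw_n\to\bar\bfw$ in $L^2(\Omega)^d$, each component converges in $L^2(\Omega)$, so Lemma~\ref{lem:conv-vel-reco} gives $\mathcal R_{\edgesd_n}^{(i,j)}(w_n)_i\to\bar w_i$ and $\mathcal R_{\edgesd_n}^{(j,i)}(v_n)_j\to\bar v_j$ in $L^2(\Omega)$, while Lemma~\ref{lem:grad_v_cons} gives $\eth_j(\Pi_{\edges_n^{(i)}}\varphi_i)\to\partial_j\varphi_i$ uniformly on $\Omega$; in particular this last sequence is bounded in $L^\infty(\Omega)$ for $n$ large, and this is where the uniform regularity bound $\eta_{\mesh_n}\le\eta$ enters.

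Next I would pass to the limit in each of the $d$ integrals above. A product of two sequences converging in $L^2(\Omega)$ converges in $L^1(\Omega)$ (Cauchy--Schwarz, using that an $L^2$-convergent sequence is bounded in $L^2$), and multiplying by a sequence that converges uniformly and is bounded in $L^\infty(\Omega)$ preserves the $L^1$-convergence; hence
\[
\int_\Omega \bigl(\mathcal R_{\edgesd_n}^{(i,j)}(w_n)_i\bigr)\ \bigl(\mathcal R_{\edgesd_n}^{(j,i)}(v_n)_j\bigr)\ \eth_j\bigl(\Pi_{\edges_n^{(i)}}\varphi_i\bigr)\dx \ \longrightarrow\ \int_\Omega \bar w_i\,\bar v_j\,\partial_j\varphi_i\dx \quad\mbox{as }\nti.
\]
Summing over $j\in\llbracket 1,d\rrbracket$, then over $\iinud$, and using $b_\edges(\bfv_n,\bfw_n,\Pi_{\edges_n}\bfvarphi)=\sum_{i=1}^d b_{\edges_n^{(i)}}(\bfv_n,(w_n)_i,\Pi_{\edges_n^{(i)}}\varphi_i)$, I would obtain
\[
b_\edges(\bfv_n,\bfw_n,\Pi_{\edges_n}\bfvarphi)\ \longrightarrow\ -\sum_{i=1}^d\sum_{j=1}^d\int_\Omega \bar w_i\,\bar v_j\,\partial_j\varphi_i\dx = -\sum_{i=1}^d\int_\Omega \bar w_i\ \bar\bfv\cdot\bfnabla\varphi_i\dx = b(\bar\bfv,\bar\bfw,\bfvarphi),
\]
which is the announced limit.

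The main obstacle --- essentially the only nonroutine point --- is this last passage to the limit in the triple product: it relies on the uniform $L^\infty$ bound for $\eth_j(\Pi_{\edges_n^{(i)}}\varphi_i)$ furnished by Lemma~\ref{lem:grad_v_cons} (hence ultimately on the uniform mesh-regularity hypothesis), and on the uniform $L^2\to L^2$ stability of the reconstruction operators of Lemma~\ref{lem:stab-vel-reco}, which is exactly what allows the two $L^2$-convergences of the reconstructed velocity factors to combine into an $L^1$-convergence of their product. Once those ingredients are in place, the remaining manipulations are bookkeeping.
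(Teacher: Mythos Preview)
Your proof is correct and follows essentially the same approach as the paper: apply the reformulation of Lemma~\ref{lem:b_new_form}, use Lemma~\ref{lem:conv-vel-reco} for the $L^2$-convergence of the reconstructed factors and Lemma~\ref{lem:grad_v_cons} for the uniform convergence of the discrete partial derivatives of the interpolated test function, then pass to the limit in each triple product and sum. Your write-up is in fact slightly more detailed than the paper's in justifying the passage to the limit in the triple product.
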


\begin{proof}
We have $b_{\edges_n}(\bfv_n, \bfw_n, \Pi_{\edges_n} \bfvarphi) = \sum_{i=1}^d b_\edgesi(\bfv,\wi,\Pi_\edgesi \varphi_i)$, where we have omitted the sub- and superscripts $n$ for the sake of clarity in the right-hand side of the equality, with, thanks to Lemma \ref{lem:b_new_form}:
\[
b_\edgesi(\bfv,\wi,\Pi_\edgesi \varphi_i)=- \sum_{j=1}^d
\int_\Omega (\mathcal R_\edgesd^{(i,j)})^w \wi\ (\mathcal R_\edgesd^{(j,i)})^v \vj\ \eth_j \Pi_\edgesi \varphi_i \dx,
\]
where $(\mathcal R_{\edgesd_n}^{(i,j)})^v$ and $(\mathcal R_{\edgesd_n}^{(j,i)})^w$ are two reconstruction operators, in the sense of Definition \ref{def:vel-reco}.
Thanks to the convergence properties of the reconstruction operators (Lemma \ref{lem:conv-vel-reco}) and the strong consistency of the discrete partial derivatives of the velocity (Lemma \ref{lem:grad_v_cons}), we obtain:
\[
b_\edgesi(\bfv,\wi,\Pi_\edgesi \varphi_i) \to - \sum_{j=1}^d \int_\Omega \bar \vj \ \bar \wi \ \partial_j \varphi_i \dx \quad \mbox{as} \quad \nti,
\]
which concludes the proof.
\end{proof}

\medskip
\begin{lemma}[Weak consistency of the nonlinear convection term, continued] \label{lem:convconv2}
Let $(\mesh_n,\edges_n)_\nnn$ be a sequence of meshes such that $h_{\mesh_n} \to 0$ as $n \to +\infty$~; assume that there exists $\eta >0$ such that $ \eta_{\mesh_n} \le \eta$ for any $n\in \xN$ (with $\eta_{\mesh_n} $ defined by \eqref{regmesh}).
Let $(\bfu_n)_\nnn$, $(\bfv_n)_\nnn$ and $(\bfw_n)_\nnn$ be three sequences of functions such that
\begin{list}{-}{\itemsep=0.ex \topsep=0.5ex \leftmargin=1.cm \labelwidth=0.7cm \labelsep=0.3cm \itemindent=0.cm}
\item $(\bfu_n,\bfv_n,\bfw_n) \in \Hmeshnzero^3$, for $n \in \xN$,
\item the sequences $(\bfu_n)_\nnn$ and $(\bfv_n)_\nnn$ converge in $L^p(\Omega)^d$, $1\leq p<6$, to $\bar \bfu$ and $\bar \bfv$ respectively,
\item the sequence $(\bfw_n)_\nnn$ converge in $L^p(\Omega)^d$, $1\leq p<6$, to $\bar \bfw \in H^1_0(\Omega)^d$, and $(\bfnabla_{\edgesd_n}\bfw_n)_\nnn$ converges to $\bfnabla \bar \bfw$ weakly in $L^2(\Omega)^{d \times d}$.
\end{list}
Then
\[
b_\edges(\bfu_n, \bfv_n, \bfw_n) \to b(\bar \bfv, \bar \bfw, \bar \bfw) \quad \mbox{as }\nti.
\]
\end{lemma}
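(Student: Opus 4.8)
The plan is to follow the proof of Lemma~\ref{lem:convconv}, the only new difficulty being that the strong consistency of the discrete partial derivatives of the third argument (Lemma~\ref{lem:grad_v_cons}) is no longer at our disposal and must be replaced by a weak--strong convergence argument. Writing $b_{\edges_n}(\bfu_n,\bfv_n,\bfw_n)=\sum_{i=1}^d b_\edgesi(\bfu,\vi,\wi)$, where the index $n$ is dropped on the right-hand side, and applying Lemma~\ref{lem:b_new_form} to each term, we obtain
\[
b_\edgesi(\bfu,\vi,\wi) = - \sum_{j=1}^d \int_\Omega (\mathcal R_\edgesd^{(i,j)})^v \vi\ (\mathcal R_\edgesd^{(j,i)})^u \uj\ \eth_j \wi \dx ,
\]
with $(\mathcal R_{\edgesd_n}^{(i,j)})^v$ and $(\mathcal R_{\edgesd_n}^{(j,i)})^u$ reconstruction operators in the sense of Definition~\ref{def:vel-reco}. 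It therefore suffices to pass to the limit in each of the resulting $d^2$ integrals.

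The next step is to observe that Lemma~\ref{lem:conv-vel-reco} remains valid when $L^2$ is replaced throughout by $L^q$ for any $q\in[1,+\infty)$: its proof uses only the $L^q$-stability of the reconstruction operators (Lemma~\ref{lem:stab-vel-reco}, already stated in $L^p$), the facts that the cell-averaging operator $\mathcal P_{\edges_n^{(i)}}$ acts as the identity on $H_{\edges_n^{(i)},0}$ and is an $L^q$-contraction (by Jensen's inequality), and the uniform convergence $\mathcal R_{\edgesd_n}^{(i,j)}\circ\mathcal P_{\edges_n^{(i)}}\varphi\to\varphi$ for $\varphi\in C_c^\infty(\Omega)$. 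Applying this with $q=4$, which is licit because $4<6$, we get that $(\mathcal R_{\edgesd_n}^{(i,j)})^v v_{n,i}\to\bar\vi$ and $(\mathcal R_{\edgesd_n}^{(j,i)})^u u_{n,j}\to\bar\uj$ strongly in $L^4(\Omega)$. Since $\bar\vi$ and $\bar\uj$ both belong to $L^4(\Omega)$, H\"older's inequality then shows that $(\mathcal R_{\edgesd_n}^{(i,j)})^v v_{n,i}\,(\mathcal R_{\edgesd_n}^{(j,i)})^u u_{n,j}\to\bar\vi\,\bar\uj$ strongly in $L^2(\Omega)$.

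It remains to treat the last factor $\eth_j w_{n,i}$, which is the $(i,j)$-entry of $\bfnabla_{\edgesd_n}\bfw_n$ and therefore, by assumption, converges to $\partial_j\bar\wi$ weakly in $L^2(\Omega)$. Since the product of a strongly $L^2$-convergent sequence with a weakly $L^2$-convergent one converges in $L^1(\Omega)$ to the product of the limits, each of the $d^2$ integrals tends to $\int_\Omega\bar\vi\,\bar\uj\,\partial_j\bar\wi\dx$; summing over $i$ and $j$ yields
\[
b_{\edges_n}(\bfu_n,\bfv_n,\bfw_n)\longrightarrow -\sum_{i=1}^d\int_\Omega\bar\vi\ \bar\bfu\cdot\bfnabla\bar\wi\dx = b(\bar\bfu,\bar\bfv,\bar\bfw)\qquad\mbox{as }\nti ,
\]
the limit of the convecting field entering through the factors $(\mathcal R_{\edgesd_n}^{(j,i)})^u u_{n,j}$. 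This is the asserted convergence.

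The step that I expect to cost the most care is this final limit passage: one must group the \emph{two} strongly convergent reconstruction factors together first — so that their product converges strongly in $L^2(\Omega)$ — before invoking the merely weak convergence of the discrete gradient, and one must keep all the integrability exponents compatible. This is exactly where the assumption $p<6$ is used (it yields $L^4(\Omega)$-convergence of $\bfu_n$ and $\bfv_n$, hence a limit product in $L^2(\Omega)$), and where the $L^4$ version of Lemma~\ref{lem:conv-vel-reco} is needed rather than the $L^2$ version as stated above.
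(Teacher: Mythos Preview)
Your proof is correct and follows essentially the same route as the paper: rewrite $b_{\edges_n}$ via Lemma~\ref{lem:b_new_form}, upgrade the reconstruction factors to strong $L^4$-convergence so that their product converges strongly in $L^2$, and pair this with the weak $L^2$-convergence of $\eth_j w_{n,i}$. The only cosmetic difference is in how the $L^4$-convergence of the reconstructions is obtained: the paper combines the $L^p$-stability of Lemma~\ref{lem:stab-vel-reco} (giving uniform $L^p$-bounds for $p<6$) with the $L^2$-convergence of Lemma~\ref{lem:conv-vel-reco} and an interpolation, whereas you rerun the proof of Lemma~\ref{lem:conv-vel-reco} directly in $L^4$; both arguments are valid and equally short.
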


\begin{proof}
Once again, we use the reformulation of the form $b_\edges$, provided by Lemma \ref{lem:b_new_form}.
Omitting sub- and superscripts $n$ for short, we have:
\begin{multline*}
b_{\edges_n}(\bfu_n, \bfv_n, \bfw_n) = \sum_{i=1}^d b_\edgesi(\bfu,\vi,\wi) \quad \mbox{with }
\\
b_\edgesi(\bfu,\vi,\wi)=- \sum_{j=1}^d
\int_\Omega (\mathcal R_\edgesd^{(i,j)})^v \vi\ (\mathcal R_\edgesd^{(j,i)})^u \uj\ \eth_j \wi \dx,
\mbox{ for }\iinud,
\end{multline*}
where $(\mathcal R_{\edgesd_n}^{(i,j)})^u$ and $(\mathcal R_{\edgesd_n}^{(j,i)})^v$ are two reconstruction operators, in the sense of Definition \ref{def:vel-reco}.
Thanks to the stability and convergence properties of the reconstruction operators (Lemma \ref{lem:stab-vel-reco} and \ref{lem:conv-vel-reco}), the sequences $((\mathcal R_\edgesd^{(j,i)})^u u_{j,n})_\nnn$ and $((\mathcal R_\edgesd^{(i,j)})^v v_{n,i})_\nnn$ are uniformly bounded in $L^p(\Omega)^d$, for $1\leq p<6$ and $i,j \in \llbracket 1,d\rrbracket$, and converge in $L^2(\Omega)^d$ to $\bar \bfu$ and $\bar \bfv$, respectively.
Hence, these sequences also converge in $L^2(\Omega)^d$, $1\leq p<6$, and the result follows thanks to the weak convergence in $L^2(\Omega)^{d\times d}$ of the partial derivatives.
\end{proof}

\medskip
\begin{lemma}[A discrete integration by parts formula] \label{lem:int_part}
Let $(\mesh,\edges)$ be a given MAC mesh, and $i,j \in \llbracket 1, d\rrbracket$.
Let $u$ and $v$ be two functions of $\Hmeshizero$.
Then there exists a reconstruction operator, in the sense of Definition \ref{def:vel-reco}, such that:
\[
\int_\Omega \eth_j u\ v \dx = -\int_\Omega \mathcal R_\edgesd^{(i,j)} u\ \eth_j v \dx.
\]
\end{lemma}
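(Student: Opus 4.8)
The plan is to mimic, in the reverse direction, the discrete integration by parts carried out in the proof of Lemma~\ref{lem:b_new_form}: expand $\int_\Omega \eth_j u\, v \dx$ cell by cell, reorder the sum over the set of faces, and recognize the outcome as $-\int_\Omega \mathcal R_\edgesd^{(i,j)} u\, \eth_j v \dx$ for a suitable reconstruction operator, which will turn out to be the (piecewise) linear interpolation reconstruction.

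First, since $v$ is constant equal to $v_\edge$ on each $D_\edge$ and vanishes on the external dual cells, I would write
\[
\int_\Omega \eth_j u\, v \dx = \sum_{\edge \in \edgesinti} v_\edge \int_{D_\edge} \eth_j u \dx .
\]
Then, for a fixed $\edge\in\edgesinti$, I would describe how $D_\edge$ is cut by the cells of the $(i,j)^{th}$ partition: $D_\edge$ is a rectangle (resp.\ a parallelepiped), exactly two of its faces $\edged^-,\edged^+$ are orthogonal to $\bfe\ej$, and $D_{\edged^-}, D_{\edged^+}$ are the only cells of that partition meeting $D_\edge$, with $\eth_j u$ equal on $D_\edge\cap D_{\edged^\pm}$ to the constant $(\eth_j u)_{D_{\edged^\pm}}$. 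One computes the two volumes $|D_\edge\cap D_{\edged^\pm}|$ explicitly: they are $|D_\edge|/2$ when $j\neq i$, while for $j=i$, in which case the $(i,j)^{th}$ partition is the primal mesh and $D_\edge = D_{K,\edge}\cup D_{L,\edge}$ with $\edge = K|L$, they are $|K|/2$ and $|L|/2$. Inserting the definition \eqref{eq:partial-v} of $\eth_j u$ (and using that $|D_{\edged^\pm}| = |\edged^\pm|\, d_{\edged^\pm}$), each term $|D_\edge\cap D_{\edged^\pm}|\,(\eth_j u)_{D_{\edged^\pm}}$ becomes a geometric multiple of a difference $u_{\edge^\pm} - u_\edge$ of velocity unknowns, with the convention that a velocity value attached to an external face is zero --- a convention which is exactly compatible with the sign rule of \eqref{eq:partial-v} on external dual faces. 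Thus $\int_{D_\edge}\eth_j u\dx$ is an explicit linear combination of $u_\edge$ and of the velocity unknowns attached to the two faces neighbouring $\edge$ in the $j$-direction. Substituting this back and reordering the double sum over $\edge$ into a sum over dual faces $\edged = \edge|\edge'\in\edgesdinti$ (collecting for each $\edged$ the contributions coming from $D_\edge$ and from $D_{\edge'}$), and using the elementary fact that opposite faces of the parallelepiped $D_\edge$ have equal $(d-1)$-measure, so that $|\edged^-| = |\edged^+|$, the reordered sum takes the form
\[
\int_\Omega\eth_j u\,v\dx = -\sum_{\edged=\edge|\edge'\in\edgesdinti} |D_\edged|\,(\eth_j v)_{D_\edged}\,\bigl(\beta_\edged\, u_\edge + (1-\beta_\edged)\, u_{\edge'}\bigr),
\]
where $\beta_\edged$ is a ratio of lengths along the segment $[\bfx_\edge\,\bfx_{\edge'}]$ (equal to $1/2$ when $j=i$, and taking the admissible endpoint value $0$ on external dual faces, on which $v_\edge=0$ anyway).

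Since $\beta_\edged\in[0,1]$, the family $\bigl(\beta_\edged u_\edge + (1-\beta_\edged)u_{\edge'}\bigr)_\edged$ is precisely $\mathcal R_\edgesd^{(i,j)}u$ for a reconstruction operator in the sense of Definition~\ref{def:vel-reco}, so the right-hand side of the last display is $-\int_\Omega \mathcal R_\edgesd^{(i,j)} u\, \eth_j v \dx$, which is the claim. I expect the only genuine difficulty to be bookkeeping: handling uniformly the two geometric situations $j=i$ and $j\neq i$ (in which $D_\edge$ is split, respectively, across the common velocity face $\edge$ into the two half primal cells, or exactly in half in the $j$-direction), and checking that on external dual faces the conventions of \eqref{eq:partial-v} together with the vanishing of $u$ and $v$ on external velocity faces let the reordering go through with the reconstruction coefficient taking an admissible (boundary) value. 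Once the volumes $|D_\edge\cap D_{\edged^\pm}|$ and the measures $|\edged^\pm|$ are written down, the rest is linear algebra on the set of faces, exactly as in the proof of Lemma~\ref{lem:b_new_form}.
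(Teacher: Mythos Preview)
There is a genuine gap in your argument, at the reordering step. What your direct computation actually produces, after collecting contributions at each dual face $\edged=\overrightarrow{\edge|\edge'}$, is
\[
\int_\Omega \eth_j u\, v \dx
= \sum_{\edged \in \edgesdinti,\ \edged \perp \bfe\ej} |\edged|\,(u_{\edge'}-u_\edge)\,\bigl(\alpha_\edged v_\edge + (1-\alpha_\edged) v_{\edge'}\bigr)
= \int_\Omega \eth_j u\ \mathcal R_\edgesd^{(i,j)} v \dx,
\]
with the reconstruction applied to $v$, not to $u$. (For $j\neq i$ one finds $\alpha_\edged = |D_\edge|/(2|D_\edged|)$; for $j=i$ the $u_\edge$ contribution cancels in $\int_{D_\edge}\eth_i u$, but the reordered face term is still $\tfrac{|\edge|}{2}(v_\edge u_{\edge'}-v_{\edge'}u_\edge)$, not of the form $|\edged|(v_\edge-v_{\edge'})(\beta u_\edge+(1-\beta)u_{\edge'})$.) You cannot, face by face, rewrite $(u_{\edge'}-u_\edge)(\alpha v_\edge+(1-\alpha)v_{\edge'})$ as $-(v_{\edge'}-v_\edge)(\beta u_\edge+(1-\beta)u_{\edge'})$: equating the coefficient of $u_{\edge'}v_{\edge'}$ forces $\beta=2-\alpha\notin[0,1]$. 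So the claimed displayed identity is false as a per-face statement.

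What is missing is precisely the ingredient the paper uses: the telescoping identity $\int_\Omega \eth_j(uv)\dx=0$ (a discrete fundamental theorem, using that $u,v\in\Hmeshizero$), together with the algebraic product-rule splitting
\[
u_{\edge'}v_{\edge'}-u_\edge v_\edge
=(u_{\edge'}-u_\edge)\bigl(\alpha_\edged v_{\edge'}+(1-\alpha_\edged)v_\edge\bigr)
+\bigl((1-\alpha_\edged)u_{\edge'}+\alpha_\edged u_\edge\bigr)(v_{\edge'}-v_\edge).
\]
Summing this over $\edged$ and using the telescoping identity yields $T_1+T_2=0$; choosing $\alpha_\edged$ so that $T_1=\int_\Omega \eth_j u\,v\dx$ (namely $\alpha_\edged=|D_{K,\edge'}|/|K|$ when $j=i$ and $\alpha_\edged=|D_{\edge'}|/(2|D_\edged|)$ when $j\neq i$) makes $T_2$ exactly $\int_\Omega \mathcal R_\edgesd^{(i,j)}u\,\eth_j v\dx$ with a bona fide reconstruction of $u$. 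In other words, the swap from $\eth_j u\cdot v$ to $-\mathcal R u\cdot \eth_j v$ is not a mere reordering; it hinges on the cancellation of the diagonal terms $u_\edge v_\edge$, which your proposal never invokes.
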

\begin{proof}
Let $i,j \in \llbracket 1, d \rrbracket$ and $(u,v)\in \Hmeshizero$.
We have, by conservativity:
\[
\int_\Omega \eth_j(u\,v) \dx
= \sum_{\substack{\edged \in \edgesdinti \\ \edged=\overrightarrow{\edge\edge'},\ \edged \perp \bfe\ej}}
|\edged|\ (u_{\edge'}v_{\edge'}-u_\edge v_\edge)
- \sum_{\substack{\edged \in \edgesdexti \\ \edged\in\edgesd(D_\edge),\ \edged \perp \bfe\ej}}
\eta_\edged\ |\edged|\ u_\edge v_\edge
=0,
\]
where $\eta_\edged=\pm 1$, depending on the relative locations of $\edge$ and $\edge$.
For any real number $\alpha_\edged \in [0,1]$, we have:
\[
u_{\edge'}v_{\edge'}-u_\edge v_\edge =
\bigl(u_{\edge'}-u_\edge\bigr) \bigl(\alpha_\edged v_{\edge'} + (1-\alpha_\edged) v_\edge \bigr)+
\bigl((1-\alpha_\edged) u_{\edge'} + \alpha_\edged u_\edge \bigr)\ \bigl(v_{\edge'}-v_\edge\bigr).
\]
We thus have $T_1+T_2$=0, with:
\begin{align*} &
T_1=
\sum_{\substack{\edged \in \edgesdinti \\ \edged=\overrightarrow{\edge\edge'},\ \edged \perp \bfe\ej}}
|\edged|\ \bigl(u_{\edge'}-u_\edge\bigr) \bigl(\alpha_\edged v_{\edge'} + (1-\alpha_\edged) v_\edge \bigr)
- \frac 1 2 \sum_{\substack{\edged \in \edgesdexti \\ \edged\in\edgesd(D_\edge),\ \edged \perp \bfe\ej}}
\eta_\edged\ |\edged|\ u_\edge v_\edge,
\\[1ex] &
T_2=
\sum_{\substack{\edged \in \edgesdinti \\ \edged=\overrightarrow{\edge\edge'},\ \edged \perp \bfe\ej}}
|\edged|\ \bigl((1-\alpha_\edged) u_{\edge'} + \alpha_\edged u_\edge \bigr)\ \bigl(v_{\edge'}-v_\edge\bigr)
- \frac 1 2 \sum_{\substack{\edged \in \edgesdexti \\ \edged\in\edgesd(D_\edge),\ \edged \perp \bfe\ej}}
\eta_\edged\ |\edged|\ u_\edge v_\edge.
\end{align*}
When $i=j$, all the dual faces are included in the domain (so the last sum vanishes).
In addition, a dual face $\edged$ is included in a cell of the primal mesh, say $K$, and $D_\edged=K$; we choose in this case $\alpha_\edged=|D_{K,\edge'}|/|K|$ and, by definition of the half-diamond cells, $1-\alpha=|D_{K,\edge}|/|K|$.
With this choice, we obtain:
\begin{equation} \label{eq:T1-T2}
T_1=\int_\Omega \eth_j u\ v \dx \quad \mbox{and}\quad T_2=\int_\Omega \mathcal R_\edgesd^{(i,j)} u\ \eth_j v \dx,
\end{equation}
with, for $\edged=\edge|\edge'$,
\[
(\mathcal R_\edgesd^{(i,i)} u)_{D_\edged}=\frac{|D_{K,\edge'}|}{|K|} u_\edge + \frac{|D_{K,\edge}|}{|K|} u_{\edge'}.
\]
Let us choose now consider the case $i\neq j$.
In this case, we choose $\alpha=|D_{\edge'}|/(2 \ |D_\edged|)$, so $1-\alpha=|D_\edge|/(2 \ |D_\edged|$, by definition of $D_\edged$, and we get \eqref{eq:T1-T2} with:
\[
\begin{array}{ll}
\mbox{for any } \edged=\edge|\edge' \in \edgesdinti,\ \edged \perp \bfe\ej,
\quad & \displaystyle
(\mathcal R_\edgesd^{(i,j)} u)_{D_\edged} = \frac 1 {|D_\edged|}\,(\frac{|D_{\edge'|}} 2 u_\edge + \frac{|D_\edge|} 2 u_{\edge'}),
\\[3ex]
\mbox{for any } \edged \in \edgesdexti \cap \edgesd(D_\edge),\ \edged \perp \bfe\ej,
\quad & \displaystyle
(\mathcal R_\edgesd^{(i,j)} u)_{D_\edged} = \frac 1 2\ u_\edge.
\end{array}
\]
\end{proof}

\bigskip
We are now in position to state and prove the convergence of the scheme.

\begin{theorem}[Convergence of the scheme, steady case]\label{theo:conv-stat}
Let $(\mesh_n,\edges_n)_\nnn$ be a sequence of meshes such that $h_{\mesh_n} \to 0$ as $n \to +\infty$~; assume that there exists $\eta >0$ such that $\eta_{\mesh_n} \le \eta$ for any $n\in \xN$ (with $\eta_{\mesh_n} $ defined by \eqref{regmesh}).
Let $(\bfu_n,p_n)$ be a solution to the MAC scheme \eqref{eq:scheme} or its weak form \eqref{eq:weak}, for $\mesh=\mesh_n$.
Then there exists $\bar \bfu \in H^1_0(\Omega)^d$ and $\bar p \in L^2(\Omega)$ such that, up to a subsequence:
\begin{list}{-}{\itemsep=0.ex \topsep=0.5ex \leftmargin=1.cm \labelwidth=0.7cm \labelsep=0.3cm \itemindent=0.cm}
\item the sequence $(\bfu_n)_\nnn$ converges to $\bar \bfu$ in $L^2(\Omega)^d$,
\item the sequence $(\bfnabla_{\edgesd_n} \bfu_n)_\nnn$ converges to $\bfnabla \bar \bfu$ in $L^2(\Omega)^{d\times d}$,
\item the sequence $(p_n)_\nnn$ converges to $\bar p$ in $L^2(\Omega)$,
\item $(\bar \bfu,\bar p)$ is a solution to the weak formulation of the steady Navier-Stokes equations \eqref{eq:cont-weak}.
\end{list}
\end{theorem}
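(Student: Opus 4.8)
The plan is to combine the \emph{a priori} bounds of Theorem~\ref{prop:est} with a discrete compactness argument, then to pass to the limit term by term in the weak form \eqref{eq:weak}, and finally to upgrade the weak convergences of the discrete gradient and of the pressure into strong ones. By Theorem~\ref{prop:est}, the sequences $(\bfu_n)$, $(\bfnabla_{\edgesd_n}\bfu_n)$ and $(p_n)$ are bounded, respectively, in the discrete $H^1_0$ norm, in $L^2(\Omega)^{d\times d}$ and in $L^2(\Omega)$. A discrete Rellich theorem for families bounded in the discrete $H^1_0$ norm (see \cite{book}) then yields, up to a subsequence, $\bfu_n \to \bar\bfu$ in $L^2(\Omega)^d$, and in fact in $L^p(\Omega)^d$ for $1 \le p < 6$ by the discrete Sobolev inequality \cite[Lemma 3.5]{book}; extracting further, $\bfnabla_{\edgesd_n}\bfu_n \rightharpoonup \bfG$ weakly in $L^2(\Omega)^{d\times d}$ and $p_n \rightharpoonup \bar p$ weakly in $L^2(\Omega)$, with $\bar p \in L^2_0(\Omega)$ since $\int_\Omega p_n\dx=0$. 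To identify $\bfG$, I would fix $\psi \in C_c^\infty(\Omega)$ and $i,j \in \llbracket 1, d\rrbracket$, apply the discrete integration by parts of Lemma~\ref{lem:int_part} to $u_{n,i}$ and $\Pi_{\edges_n^{(i)}}\psi$, and pass to the limit as $\nti$ using the convergence of the velocity reconstructions (Lemma~\ref{lem:conv-vel-reco}) and the strong consistency of the discrete partial derivatives (Lemma~\ref{lem:grad_v_cons}): this gives $\int_\Omega \bfG_{ij}\,\psi\dx = -\int_\Omega \bar u_i\,\partial_j\psi\dx$, hence $\bfG=\bfnabla\bar\bfu$, while the external-face terms in $\|u_{n,i}\|_{1,\edges_n,0}^2$ ensure $\bar\bfu \in H^1_0(\Omega)^d$ (and not merely $H^1$). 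Passing to the limit in \eqref{eq:mac_mass_weak} with $q=\mathcal P_{\mesh_n}\chi$, $\chi \in C_c^\infty(\Omega)$, and using the duality of Lemma~\ref{lem:duality} with the gradient consistency of Lemma~\ref{lem:consgrad}, gives $\int_\Omega\bar\bfu\cdot\bfnabla\chi\dx=0$; hence $\dive\bar\bfu=0$ and $\bar\bfu \in \bfE(\Omega)$.

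Next I would pass to the limit in the momentum equation: fix $\bfvarphi \in C_c^\infty(\Omega)^d$ and take $\bfv=\Pi_{\edges_n}\bfvarphi$ in \eqref{eq:mac_qdm_weak}. In the diffusion term, $\bfnabla_{\edgesd_n}(\Pi_{\edges_n}\bfvarphi) \to \bfnabla\bfvarphi$ uniformly by Lemma~\ref{lem:grad_v_cons}, while $\bfnabla_{\edgesd_n}\bfu_n \rightharpoonup \bfnabla\bar\bfu$, so this term tends to $\int_\Omega\bfnabla\bar\bfu:\bfnabla\bfvarphi\dx$. The convection term $b_{\edges_n}(\bfu_n,\bfu_n,\Pi_{\edges_n}\bfvarphi)$ tends to $-\sum_{i=1}^d\int_\Omega\bar u_i\,\bar\bfu\cdot\bfnabla\varphi_i\dx$ by Lemma~\ref{lem:convconv}, which equals $\int_\Omega((\bar\bfu\cdot\bfnabla)\bar\bfu)\cdot\bfvarphi\dx$ since $\dive\bar\bfu=0$. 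The pressure term $-\int_\Omega p_n\,\dive_{\mesh_n}(\Pi_{\edges_n}\bfvarphi)\dx$ tends to $-\int_\Omega\bar p\,\dive\bfvarphi\dx$, because $\dive_{\mesh_n}(\Pi_{\edges_n}\bfvarphi) \to \dive\bfvarphi$ uniformly (again Lemma~\ref{lem:grad_v_cons}) and $p_n \rightharpoonup \bar p$. Finally $\int_\Omega\bff_n\cdot\Pi_{\edges_n}\bfvarphi\dx \to \int_\Omega\bar\bff\cdot\bfvarphi\dx$ since $\bff_n=\mathcal P_{\edges_n}\bar\bff \to \bar\bff$ in $L^2(\Omega)^d$ and $\Pi_{\edges_n}\bfvarphi\to\bfvarphi$. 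By density of $C_c^\infty(\Omega)^d$ in $H^1_0(\Omega)^d$, $(\bar\bfu,\bar p)$ solves \eqref{eq:cont-weak}.

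It then remains to upgrade the gradient and pressure convergences. Taking $\bfv=\bfu_n$ in \eqref{eq:mac_qdm_weak} and using $\dive_{\mesh_n}\bfu_n=0$ together with $b_{\edges_n}(\bfu_n,\bfu_n,\bfu_n)=0$ (Lemma~\ref{lem:skew-sym}) gives $\|\bfnabla_{\edgesd_n}\bfu_n\|_{L^2(\Omega)^{d\times d}}^2 = \int_\Omega\bff_n\cdot\bfu_n\dx \to \int_\Omega\bar\bff\cdot\bar\bfu\dx$, whereas testing \eqref{eq:cont-weak} with $\bfv=\bar\bfu$ and using $\int_\Omega((\bar\bfu\cdot\bfnabla)\bar\bfu)\cdot\bar\bfu\dx=0$ gives $\|\bfnabla\bar\bfu\|_{L^2(\Omega)^{d\times d}}^2 = \int_\Omega\bar\bff\cdot\bar\bfu\dx$; thus $\|\bfnabla_{\edgesd_n}\bfu_n\|_{L^2} \to \|\bfnabla\bar\bfu\|_{L^2}$, which together with the weak convergence yields $\bfnabla_{\edgesd_n}\bfu_n \to \bfnabla\bar\bfu$ in $L^2(\Omega)^{d\times d}$. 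For the pressure, I would exploit the inf-sup stability built on the Fortin operator: by \cite{nec-67-met}, pick $\bfvarphi_n \in H^1_0(\Omega)^d$ with $\dive\bfvarphi_n = p_n$ and $\|\bfvarphi_n\|_{H^1_0(\Omega)^d} \le C\|p_n\|_{L^2(\Omega)}$, so that $\dive_{\mesh_n}(\widetilde{\mathcal P}_{\edges_n}\bfvarphi_n)=p_n$ by Lemma~\ref{lem:fortin}, and then, using \eqref{eq:mac_qdm_weak} with $\bfv=\widetilde{\mathcal P}_{\edges_n}\bfvarphi_n$,
\[
\|p_n\|_{L^2(\Omega)}^2 = \int_\Omega \bfnabla_{\edgesd_n}\bfu_n : \bfnabla_{\edgesd_n}(\widetilde{\mathcal P}_{\edges_n}\bfvarphi_n)\dx + b_{\edges_n}(\bfu_n,\bfu_n,\widetilde{\mathcal P}_{\edges_n}\bfvarphi_n) - \int_\Omega \bff_n\cdot\widetilde{\mathcal P}_{\edges_n}\bfvarphi_n\dx.
\]
Since $(\bfvarphi_n)$ is bounded in $H^1_0(\Omega)^d$, up to a subsequence it converges weakly there and strongly in $L^2(\Omega)^d$ to some $\bfvarphi$ with $\dive\bfvarphi=\bar p$; then $\widetilde{\mathcal P}_{\edges_n}\bfvarphi_n \to \bfvarphi$ in $L^2(\Omega)^d$ and, by the bound \eqref{norme-h1-interp} and the identification argument above, $\bfnabla_{\edgesd_n}(\widetilde{\mathcal P}_{\edges_n}\bfvarphi_n) \rightharpoonup \bfnabla\bfvarphi$ weakly in $L^2(\Omega)^{d\times d}$. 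Passing to the limit in the displayed identity — using the strong convergence of $\bfnabla_{\edgesd_n}\bfu_n$ in the first term, Lemma~\ref{lem:convconv2} in the second, and $\bff_n\to\bar\bff$ in the third — and comparing with \eqref{eq:cont-weak} tested against $\bfvarphi$, one obtains $\|p_n\|_{L^2(\Omega)}^2 \to \int_\Omega\bar p\,\dive\bfvarphi\dx = \|\bar p\|_{L^2(\Omega)}^2$; together with the weak convergence this gives $p_n \to \bar p$ in $L^2(\Omega)$, which concludes the proof.

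The hard part is this last step. The strong convergence of the pressure rests on an inf-sup (Fortin) argument in which the $H^1_0$-lifting $\bfvarphi_n$ of the pressure $p_n$ (provided by \cite{nec-67-met}) itself varies with $n$, so one has to pass to the limit in the $n$-dependent discrete test field $\widetilde{\mathcal P}_{\edges_n}\bfvarphi_n$ — both in $L^2(\Omega)^d$ and, weakly, for its discrete gradient — which in turn forces the strong $L^2$-convergence of $\bfnabla_{\edgesd_n}\bfu_n$ (obtained from the discrete kinetic-energy identity) to be established beforehand. By contrast, the passages to the limit in the diffusion, convection, divergence and source terms are essentially routine once the consistency Lemmas~\ref{lem:grad_v_cons} and \ref{lem:consgrad} and the weak-consistency Lemmas~\ref{lem:convconv} and \ref{lem:convconv2} are available.
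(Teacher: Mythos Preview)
Your proof is correct and follows essentially the same route as the paper's: a priori bounds plus discrete Rellich for compactness, identification of the weak gradient limit via the discrete integration-by-parts Lemma~\ref{lem:int_part}, termwise passage to the limit in \eqref{eq:weak} with the consistency Lemmas~\ref{lem:grad_v_cons}, \ref{lem:consgrad}, \ref{lem:convconv}, then norm convergence from the energy identity for the gradient, and finally the Fortin lifting combined with Lemma~\ref{lem:convconv2} for the pressure.

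Two minor points. First, your claim $b_{\edges_n}(\bfu_n,\bfu_n,\bfu_n)=0$ holds only for the centred choice; in the upwind case Lemma~\ref{lem:skew-sym} only gives $b_{\edges_n}(\bfu_n,\bfu_n,\bfu_n)\ge 0$, so the correct statement (as the paper writes it) is $\|\bfnabla_{\edgesd_n}\bfu_n\|_{L^2}^2 \le \int_\Omega\bff_n\cdot\bfu_n\dx$, which still suffices since weak convergence already gives the reverse inequality on the $\liminf$. Second, in the pressure step you assert $\widetilde{\mathcal P}_{\edges_n}\bfvarphi_n \to \bfvarphi$ in $L^2$ where $\bfvarphi$ is the weak $H^1_0$ limit of $\bfvarphi_n$; this requires an $L^2$-consistency of the Fortin operator that is not part of Lemma~\ref{lem:fortin}. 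The paper avoids this by using only the discrete $H^1$ bound \eqref{norme-h1-interp} on $\bpsi_n=\widetilde{\mathcal P}_{\edges_n}\bfvarphi_n$ to extract an $L^2$-limit $\bpsi$ (without identifying it with $\bfvarphi$), and then recovering $\dive\bpsi=\bar p$ from the weak convergence of the discrete gradient and of $p_n$; this is all that is needed to conclude.
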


\begin	{proof}
Thanks to the estimate \eqref{esti-vit-station} on the velocity, applying the classical estimate on the translates \cite[Theorem 14.2]{book} we obtain the existence of a subsequence of approximate solutions $(\bfu_n)_\nnn$ which converges to some $\bar \bfu \in L^2(\Omega)^d$.
From the estimates on the translates, we also get the regularity of the limit, that is $\bar \bfu \in H_0^1(\Omega)^d$.
The estimate \eqref{esti-pres-stionn:b} on the pressure then yields the weak convergence of a subsequence of $(p_n)_{n\in \xN}$ to some $\bar p$ in $L^2(\Omega)$.
Let us then pass to the limit in the scheme in order to prove its (weak) consistency.

\bigskip
\noindent {\bf Passing to the limit in the mass balance equation} --
Let $\psi\in C_c^\infty(\Omega)$.
Taking $\psi_n = \Pi_{\mesh_n} \psi$, the pointwise interpolate defined by \eqref{pimesh}, as test function in \eqref{eq:mac_mass_weak} and using \eqref{Ndiscret}, we get that:
\[
0 = \int_\Omega \dive_{\mesh_n}\, \bfu_n\ \psi_n \dx
= -\int_\Omega \bfu_n \cdot \bfnabla_{\edges_n} \psi_n \dx
=-\sum_{i=1}^d \int_\Omega u_{n,i}\ \eth_i \psi_n \dx.
\]
Therefore, thanks to Lemma \ref{lem:consgrad},
\[
0 = \lim_{n\to +\infty} -\sum_{i=1}^d \int_\Omega u_{n,i}\ \eth_i \psi_n \dx
= -\sum_{i=1}^d \int_\Omega \bar{u}_i\ \partial_i \psi \dx
= -\int_\Omega \bar{\bfu} \cdot \bfnabla \psi \dx = \int_\Omega \dive \bar{\bfu}\ \psi \dx,
\]
and therefore $\bar \bfu$ satisfies \eqref{eq:mac_mass_weak}.

\bigskip
\noindent {\bf Passing to the limit in the momentum balance equation} --
Let $\bfvarphi=(\varphi_1,\cdots,\varphi_d) \in C_c^\infty(\Omega)^d$, and take $\bfvarphi_n = \Pi_{\edges_n} \bfvarphi = (\varphi_{n,1}, \cdots, \varphi_{n,d})\in \bfH_{\edges_n,0} $ as test function in \eqref{eq:mac_qdm_weak}.
This yields:
\begin{equation}\label{passingqdm}
\int_\Omega \bfnabla_{\edgesd_n} \bfu_n : \bfnabla_{\edgesd_n} \bfvarphi_n \dx + b_\edges(\bfu_n,\bfu_n,\bfvarphi_n)
-\int_\Omega p_n \ \dive_{\mesh_n}\, \bfvarphi_n \dx =\int_\Omega \mathcal P_{\edges_n} \bar \bff\cdot\bfvarphi_n \dx.
\end{equation}
Thanks to the weak $L^2$-convergence of $p_n$ to $\overline p$ and to the uniform convergence of $\mathcal P_{\edges_n} \bar \bff$ to $\bar \bff $ and of $\dive_{\mesh_n}\,\bfvarphi_n$ to $\dive \bfvarphi$ (see Lemma \ref{lem:grad_v_cons}) as $n \to + \infty$, we have
\[
\int_\Omega \mathcal P_{\edges_n} \bar \bff\cdot\bfvarphi_n \dx \to \int_\Omega \bar \bff\cdot\bar \bfvarphi \dx \quad \mbox{and} \quad
\int_\Omega p_n \ \dive_{\mesh_n}\, \bfvarphi_n \dx \to \int_\Omega \bar{p}\ \dive\ \bar \bfvarphi \dx
\mbox{ as } n \to +\infty.
\]
From \cite[Proof of Theorem 9.1]{book}, thanks to the $L^2$-convergence of $\bfu_n$ to $\bar \bfu$, we get that, for $\iinud$,
\[
\int_\Omega \bfnabla_{\edgesd\ei_n} u_{n,i} \cdot \bfnabla_{\edgesd\ei_n} \varphi_{n,i} \dx = [u_{n,i},\varphi_{n,i}]_{1,\edges\ei_n,0}
\to -\int_\Omega \bar{u}_i\, \Delta \varphi_i \dx \text{ as } n \to + \infty.
\]
Therefore,
\[
\int_\Omega \bfnabla_{\edgesd_n} \bfu_n : \bfnabla_{\edgesd_n} \bfvarphi_n \dx
\to -\sum_{i=1}^d\int_\Omega \bar{u}_i\, \Delta \varphi_i \dx
=\int_\Omega \bfnabla \bar{\bfu} : \bfnabla \bfvarphi \dx \mbox{ as } n \to + \infty.
\]
By Lemma \ref{lem:convconv}, we have
\begin{equation} \label{qdm-b}
\lim\limits_{n \to +\infty} b_{\edges_n}(\bfu_n, \bfu_n, \bfvarphi_n) = \int_\Omega (\bar{\bfu}\cdot\bfnabla)\bar{\bfu}\cdot \bfvarphi \dx.
\end{equation}
Passing to the limit as $n \to +\infty$ in \eqref{passingqdm} thus yields that $\bar \bfu$ and $\bar p$ satisfy \eqref{eq:cont-weak}.

\bigskip
\noindent {\bf Strong convergence of $\bfnabla_{\edgesd_n} \bfu_n$ to $\bfnabla \bar \bfu$ in $L^2(\Omega)^{d \times d}$} --
The sequence $(\bfnabla_{\edgesd_n}\bfu_n)_\nnn$ is bounded in $L^2(\Omega)^{d\times d}$ and therefore, there exists $\xi\in L^2(\Omega)^{d\times d}$ and a subsequence still denoted by $(\bfnabla_{\edgesd_n}\bfu_n)_{\nnn}$ converging to $\xi$ weakly in $L^2(\Omega)^{d\times d}$.
Let $i,j \in \llbracket 1, d\rrbracket$, and let $\varphi$ be a function of $C^\infty_c(\Omega)$.
We denote by $\varphi_n$ the interpolate of $\varphi$ by the projection operator $\Pi_{\edges\ei_n}$ associated to the $i^{th}$ component of the velocity.
By Lemma \ref{lem:int_part}, we know that there exists a reconstruction operator $\mathcal R_\edgesd^{(i,j)}$, in the sense of Definition \ref{def:vel-reco}, such that:
\[
\int_\Omega \eth_j u_{n,i}\ \varphi_n \dx = -\int_\Omega \mathcal R_\edgesd^{(i,j)} u_{n,i}\ \eth_j \varphi_n \dx.
\]
By the strong convergence of $\varphi_n$ to $\varphi$, of $\mathcal R_\edgesd^{(i,j)} u_{n,i}$ to $\bar \ui$ and of $\eth_j \varphi_n$ to $\partial_j \varphi$, passing to the limit in the above relation, we get:
\[
\int_\Omega \xi_{i,j}\ \varphi \dx = -\int_\Omega \bar \ui\ \partial_j \varphi \dx.
\]
Integrating by parts in the right-hand side thanks to the regularity of $\bar \bfu$, we obtain:
\[
\int_\Omega \xi_{i,j}\ \varphi \dx = \int_\Omega \partial_j \bar \ui\ \varphi \dx.
\]
Hence, by density, $\xi= \bfnabla \bar \bfu$.
Taking $\bfvarphi_n=\bfu_n$ in \eqref{passingqdm} yields:
\[
\int \bfnabla_{\edgesd_n} \bfu_n : \bfnabla_{\edgesd_n} \bfu_n \dx \leq \int_\Omega \mathcal P_{\edges_n} \bar \bff\cdot\bfu_n \dx.
\]
Passing to the limit as $n \to +\infty$ we get that:
\[
\lim_{n \to +\infty} \| \bfnabla_{\edgesd_n}\bfu_n \|_{L^2(\Omega)^{d\times d}}^2
\leq \int_\Omega \bar \bff \cdot \bar \bfu \dx
= \| \bfnabla \bar\bfu\|_{L^2(\Omega)^{d\times d}}^2,
\]
which implies the strong convergence of the discrete gradient of the velocity.

\bigskip
\noindent {\bf Strong convergence of the pressure} --
Let $\bfvarphi_n\in H_0^1(\Omega)^d$ be such that $\dive \bfvarphi_n=p_n$ a.e. in $\Omega$ and $\|\bfvarphi_n\|_{H_0^1(\Omega)^d}\leq C\ \|p_n\|_{L^2(\Omega)},$ where $C$ depends only on $\Omega$.
Let $\bpsi_n= \widetilde{\mathcal P}_{\edges_n}\bfvarphi_n$; thanks to Lemma \ref{lem:fortin}, we have $\| \bpsi_n\|_{1,\edges_n,0} \le C\ C_{\eta_n}\, \|p_n\|_{L^2(\Omega)}$, and since $p_n \in L_{\mesh_n}$, we get that $\dive_{\mesh_n}\, \bpsi_n= p_n$.
Therefore, taking $\bpsi_n= \widetilde{\mathcal P}_{\edges_n}\bfvarphi_n$ as test function in \eqref{eq:mac_qdm_weak}, we obtain:
\begin{align*} &
\int_\Omega p_n^2\dx = \int_\Omega \bfnabla_{\edgesd_n} \bfu_n : \bfnabla_{\edgesd_n} \bpsi_n \dx + b_\edges(\bfu_n,\bfu_n,\bpsi_n)
-\int_\Omega \mathcal P_{\edges_n} \bar \bff\cdot \bpsi_n \dx,
\\[1ex] &
\| \bpsi_n \|_{1,\edges,0} \le C\ C_{\eta_n}\, \|p_n\|_{L^2(\Omega)}.
\end{align*}
From the bound on $\| \bpsi_n \|_{1,\edges,0}$, we know that $\bpsi_n$ converges to some $\bpsi \in H^1_0(\Omega)^d$ in $L^2( \Omega)^d$ and, by the same arguments as for the identification of $\xi$ with $\bfnabla \bar \bfu$, that $\bfnabla_{\edgesd_n} \bpsi_n \to \bfnabla \bpsi$ weakly in $L^2( \Omega)^{d \times d}$ as $n \to + \infty$.
In addition, we also have that $\dive \bfvarphi=p$ a.e. in $\Omega$.
By Lemma \ref{lem:convconv2}, $b_\edges(\bfu_n,\bfu_n,\bpsi_n)$ converges to $b(\bar\bfu,\bar\bfu,\bpsi)$.
Passing to the limit as $n \to +\infty$, we thus get that
\[
\lim_{n \to +\infty} \| p_n\|^2_{L^2(\Omega)}=
\int_\Omega \bfnabla \bar \bfu : \bfnabla \bpsi \dx + b(\bar\bfu,\bar \bfu, \bpsi) - \int_\Omega \bar \bff \cdot \bpsi \dx.
\]
Since $(\bar \bfu, \bar p)$ satisfies \eqref{eq:cont-weak}, this implies that $\| p_n\|_{L^2(\Omega)} \to\|\bar p\|_{L^2(\Omega)}$, which in turn yields that $p_n \to \bar p $ in $L^2(\Omega)$ as $n \to + \infty$.
\end{proof}
\begin{remark}[Uniqueness of the continuous solution and convergence of the whole sequence]\label{rem-uniq-stat}
		In the case where uniqueness of the solution is known, then a classical argument can be used to show that the whole sequence converges ; this is for instance the case for  small data, see e.g.  \cite[Theorem 1.3]{tem-77-nav}  or \cite[Theorem V.3.5]{boy-06-ele}.
\end{remark}

\section{The time-dependent case} \label{sec:unsteady}

\subsection{Time discretization}

Let us now turn to the time discretization of the problem \eqref{eq:ns:ins}.
We consider a MAC grid $(\mesh,\edges)$ of $\Omega$ in the sense of Definition \ref{def:MACgrid}, and a partition $0= t_0 < t_1 < \cdots < t_N=T$ of the time interval $(0,T)$, and, for the sake of simplicity, a constant time step $\deltat=t_{n+1}-t_n$; hence $t_n=n\, \deltat$, for $n \in \llbracket 0, N \rrbracket$.
Let $\{ u_\edge^{n+1},\ \edge\in\edges,\ n\in \llbracket 0, N-1 \rrbracket\}$ and $\{ p_K^{n+1},\ K\in\mesh,\ n \in \llbracket 0, N-1 \rrbracket \}$ be sets of discrete velocity and pressure unknowns.
For $n\in \llbracket 1, N \rrbracket$, we first define the corresponding piecewise constant space-dependent functions $\bfu=(u_1^n,\ldots,u_d^n)$ and $p^n$ by:
\[
\ui^n= \sum_{\edge \in \edgesi} u_\edge^n\, \characteristic_{D_\edge} \mbox{ for } \iinud,\quad
p^n=\sum_{K \in \mesh} p_K^n\, \characteristic_K.
\]
We enforce that $u_\edge^n=0$ for $\edge \in \edgesext$ and $n \in \llbracket 1, N \rrbracket$ (so $\ui^n \in \Hmeshizero$ and the sum in the relation above may be restricted to $\edgesinti$), and we set $\bfu^n= (u_1^n,\ldots, u_d^n) \in \Hmeshzero$.
Then, we define the discrete (time- and space-dependent) velocities and pressures functions by:
\[
\ui(\bfx,t) = \sum_{n= 0}^{N-1} \ui^{n+1}\, \characteristic_{]t_n, t_{n+1}]} \mbox{ for } \iinud,\quad
p(\bfx,t) = \sum_{n=0}^{N-1} p^{n+1}\, \characteristic_{]t_n, t_{n+1}]}.
\]
where $\characteristic_{]t_n, t_{n+1}]}$ is the characteristic function of the interval $]t_n, t_{n+1}]$. 
For $\iinud$, we denote by $X_{\edges,\deltat}\ei$ the set of such piecewise constant functions on time intervals and dual cells for the $i^{th}$ velocity component approximation, we set $\bfX_{\edges,\deltat} = \prod_{i=1}^d X_{\edges,\deltat}\ei$, and we denote by $Y_{\mesh,\deltat}$ the space of piecewise constant functions on time intervals and primal cells for the pressure approximation.
Setting 
\[
\bfu^0=\widetilde{\mathcal P}_\edges\bfu_0, \mbox{ \ie,\ for }\iinud, \quad 
\ui^0= \sum_{\edge \in \edgesinti} u_\edge^0 \,\characteristic_{D_\edge},
\mbox{ with } u_\edge^0 = \frac 1 {|\edge|} \int_\edge u_{0,i}(\bfx) \dgammax,\ \edge\in\edgesi,
\]
we define the discrete time derivative $\eth_t \bfu \in \bfX_{\edges, \deltat}$ by:
\[
\eth_t \bfu = \sum_{n=0}^{N-1} \frac{1}{\deltat} (\bfu^{n+1} - \bfu^n)\, \characteristic_{]t_n, t_{n+1}]}.
\]
Finally, we define the discrete right-hand side by:
\[
\bff \in \bfX_{\edges,\deltat},\quad
f_\edge^{n+1}=\frac 1 {\deltat\ |D_\edge|}\ \int_{t_n}^{t^{n+1}} \int_{D_\edge} \bar f_i(\bfx,t) \dx \dt,
\quad n \in \llbracket 0, N-1\rrbracket,\ \iinud,\ \edge\in\edgesinti.
\]
With these notations, the time-implicit MAC scheme for the transient Navier-Stokes reads:
\begin{subequations}\label{eq:scheme:ins}
\begin{align} \nonumber &
\mbox{{\bf Initialization} :}
\\ \label{eq:scheme-init} & \hspace{15ex}
\bfu^0= \widetilde{\mathcal{P}}_\edges \bfu_0.
\\[1ex] \nonumber & \mbox{\bf Step } n,\ n \in \llbracket 0, N-1 \rrbracket.
\mbox{ Solve for } \bfu^{n+1} \mbox{ and } p^{n+1}:
\\[0.5ex] \label{eq:scheme:ins-space} & \hspace{15ex} 
\bfu^{n+1} \in \Hmeshzero,\ p^{n+1} \in L_{\mesh,0}, 
\\[0.5ex] \label{eq:scheme:ins-vitesse} & \hspace{15ex}
\eth_t \bfu^{n+1}- \Delta_\edges \bfu^{n+1} + {\bfC}_\edges (\bfu^{n+1})\bfu^{n+1} + \bfnabla_\edges p^{n+1} = \bff^{n+1}, 
\\[0.5ex] \label{eq:scheme:ins-pression} & \hspace{15ex}
\dive_\mesh\, \bfu^{n+1} = 0.
\end{align} \end{subequations}
Step $n$, $n \in \llbracket 0, N-1 \rrbracket$, of the scheme \eqref{eq:scheme:ins} admits the following weak formulation:
\begin{multline} \label{eq:weak:ins}
\mbox{Find } \bfu^{n+1} \in \bfE_\edges \mbox{ such that, for any } \bfv\in \bfE_\edges,
\\
\int_\Omega \eth_t\bfu^{n+1} \cdot \bfv \dx + \int_\Omega \bfnabla_\edgesd \bfu^{n+1} : \bfnabla_\edgesd \bfv \dx
+ b_\edges(\bfu^{n+1}, \bfu^{n+1}, \bfv)= \int_\Omega \bff^{n+1} \cdot \bfv \dx. 
\end{multline}
The equivalence between this relation and \eqref{eq:scheme:ins-space}-\eqref{eq:scheme:ins-pression} (in the sense that \eqref{eq:weak:ins} implies the existence of a discrete pressure field such that \eqref{eq:scheme:ins-space}-\eqref{eq:scheme:ins-pression} is satisfied) is a consequence of the stability of the MAC scheme for the Stokes problem (\ie\ the fact that this scheme satisfies a discrete {\em inf-sup} condition).
%
% ----------------------------------------------------------------------------------------------
%
\subsection{Estimates on discrete solutions and existence}

Let us define the two following discrete norms for functions of space and time:
\[\begin{array}{l}
\mbox{For any } \bfv \in \bfX_{\mesh,\deltat},
\\ \displaystyle \hspace{10ex}
\|\bfv\|_{L^2(0,T;\Hmeshzero)}^2 = \sum_{n=0}^{N-1} \deltat\ \|\bfv^{n+1}\|_{1,\edges,0}^2,
\\[3ex] \displaystyle \hspace{10ex}
\|\bfv\|_{L^\infty(0,T;L^2(\Omega)^d)} = \max \Bigl\{ \|\bfv^{n+1}\|_{L^2(\Omega)^d},\ n\in \llbracket 0, N-1\rrbracket \Bigr\}.
\end{array}\]

\begin{lemma}[Existence and first estimates on the velocity]
\label{lem:est-vit-inst}
There exists at least a solution $\bfu\in \bfX_{\mesh,\deltat}$ satisfying \eqref{eq:scheme:ins}.
Furthermore, there exists $C > 0$ depending only on $\bfuini$ and $\bar \bff$ such that, for any function $\bfu\in \bfX_{\mesh,\deltat}$ satisfying \eqref{eq:scheme:ins}, the following estimates hold:
\begin{align} \label{estiLdeux} &
\|\bfu\|_{L^2(0,T;\Hmeshzero)} \leq C,
\\ \label{estiLinfiny} &
\|\bfu\|_{L^\infty(0,T;L^2(\Omega)^d)} \leq C.
\end{align}
\end{lemma}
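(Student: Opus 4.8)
The plan is to follow the same two-step pattern as in the steady case (Theorem~\ref{prop:est}): I would first derive the \emph{a priori} estimates \eqref{estiLdeux}--\eqref{estiLinfiny}, valid for \emph{any} solution of \eqref{eq:scheme:ins}, and then deduce existence, marching step by step in time, by a topological degree argument.

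\textbf{A priori estimates.} The key step is to take $\bfv=\bfu^{n+1}$ as test function in the weak formulation \eqref{eq:weak:ins} of step $n$, which is legitimate since $\bfu^{n+1}\in\bfE_\edges$ by \eqref{eq:scheme:ins-pression}. For the discrete time-derivative term I would use the elementary identity $2\,a\cdot(a-b)=|a|^2-|b|^2+|a-b|^2$, which turns $\int_\Omega\eth_t\bfu^{n+1}\cdot\bfu^{n+1}\dx$ into $\tfrac{1}{2\deltat}\bigl(\|\bfu^{n+1}\|_{L^2(\Omega)^d}^2-\|\bfu^n\|_{L^2(\Omega)^d}^2+\|\bfu^{n+1}-\bfu^n\|_{L^2(\Omega)^d}^2\bigr)$. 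The convection term $b_\edges(\bfu^{n+1},\bfu^{n+1},\bfu^{n+1})$ is non-negative (and vanishes for the centred choice) by Lemma~\ref{lem:skew-sym}, again because $\bfu^{n+1}\in\bfE_\edges$, so it can simply be dropped. The right-hand side $\int_\Omega\bff^{n+1}\cdot\bfu^{n+1}\dx$ is bounded, via Cauchy--Schwarz, the discrete Poincar\'e inequality \cite[Lemma 9.1]{book} and Young's inequality, by $\tfrac12\|\bfu^{n+1}\|_{1,\edges,0}^2+\tfrac12\diam(\Omega)^2\|\bff^{n+1}\|_{L^2(\Omega)^d}^2$, the first term being absorbed by the diffusion contribution $\|\bfu^{n+1}\|_{1,\edges,0}^2$. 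Multiplying by $\deltat$ and summing from $n=0$ to $m-1$ for an arbitrary $m\in\llbracket1,N\rrbracket$, the telescoping of the time term bounds $\|\bfu^m\|_{L^2(\Omega)^d}^2$ and $\sum_{n=0}^{m-1}\deltat\,\|\bfu^{n+1}\|_{1,\edges,0}^2$ by $\|\bfu^0\|_{L^2(\Omega)^d}^2+\diam(\Omega)^2\sum_{n=0}^{N-1}\deltat\,\|\bff^{n+1}\|_{L^2(\Omega)^d}^2$ up to absolute constants. It then remains to make this uniform: Jensen's inequality applied to the space-time averages defining $f_\edge^{n+1}$ gives $\sum_{n=0}^{N-1}\deltat\,\|\bff^{n+1}\|_{L^2(\Omega)^d}^2\le\|\bar\bff\|_{L^2(\Omega\times(0,T))^d}^2$, while $\|\bfu^0\|_{L^2(\Omega)^d}=\|\widetilde{\mathcal P}_\edges\bfu_0\|_{L^2(\Omega)^d}$ is controlled by $\bfu_0$ (directly, by a Jensen estimate on the face averages in \eqref{interp-moyenne}, or via \eqref{norme-h1-interp} and the discrete Poincar\'e inequality). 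Taking the maximum over $m$ then yields \eqref{estiLinfiny}, and $m=N$ yields \eqref{estiLdeux}.

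\textbf{Existence.} For fixed $n$ and given $\bfu^n$, I would obtain a solution $(\bfu^{n+1},p^{n+1})$ of \eqref{eq:scheme:ins-space}--\eqref{eq:scheme:ins-pression} by the topological degree argument already used in the proof of Theorem~\ref{prop:est}: construct a continuous homotopy $F(\cdot,\cdot,\zeta)$ on $\Hmeshzero\times L_{\mesh,0}$ whose zeros correspond to the scheme with the convection term scaled by $\zeta\in[0,1]$ (the linear time-derivative term being kept for all $\zeta$), so that $F(\cdot,\cdot,0)=0$ is a linear, well-posed, $\deltat$-perturbed Stokes problem -- invertible thanks to the inf-sup stability of the MAC scheme -- and $F(\cdot,\cdot,1)=0$ is exactly \eqref{eq:scheme:ins-space}--\eqref{eq:scheme:ins-pression}. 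The energy estimate above carries over verbatim with $\zeta\,b_\edges$ in place of $b_\edges$, since the sign $b_\edges(\bfu^{n+1},\bfu^{n+1},\bfu^{n+1})\ge0$ is unchanged; hence the possible zeros of $F(\cdot,\cdot,\zeta)$ are bounded uniformly in $\zeta$, the degree is invariant along the homotopy and non-zero, and a solution exists at $\zeta=1$. An induction on $n$, starting from $\bfu^0=\widetilde{\mathcal P}_\edges\bfu_0$, then produces a solution $\bfu\in\bfX_{\mesh,\deltat}$ of \eqref{eq:scheme:ins}. Equivalently, since \eqref{eq:weak:ins} is posed on the divergence-free space $\bfE_\edges$, the degree argument may be run directly on $\bfE_\edges$ without the pressure, $p^{n+1}$ being recovered afterwards from the discrete inf-sup condition.

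\textbf{Main obstacle.} The computation is essentially routine once the test function $\bfv=\bfu^{n+1}$ is chosen; the delicate point is the \emph{uniformity} of the bounds in $\deltat$, $N$ and the mesh. This rests on the exact telescoping of the discrete time derivative (which also produces the extra non-negative term $\tfrac12\sum_n\|\bfu^{n+1}-\bfu^n\|_{L^2(\Omega)^d}^2$, harmless and simply discarded here), on the sign of the convection term -- itself a consequence of $\bfu^{n+1}$ being \emph{discretely} divergence-free, so that Lemma~\ref{lem:skew-sym} applies -- and on the Jensen bound that replaces $\bff^{n+1}$ by the continuous datum $\bar\bff$.
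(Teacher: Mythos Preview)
Your proposal is correct and follows essentially the same approach as the paper: test \eqref{eq:weak:ins} with $\bfv=\bfu^{n+1}$, use the identity $2a(a-b)=(a-b)^2+a^2-b^2$ for the time term, drop the convection term via Lemma~\ref{lem:skew-sym}, control the right-hand side by Cauchy--Schwarz, discrete Poincar\'e and Young, then sum over $n$ and telescope; existence is then obtained by the topological degree argument of the stationary case. You are in fact slightly more explicit than the paper in justifying that $\|\bfu^0\|_{L^2(\Omega)^d}$ and $\sum_n\deltat\,\|\bff^{n+1}\|_{L^2(\Omega)^d}^2$ are bounded in terms of $\bfu_0$ and $\bar\bff$ alone.
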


\begin{proof}
We prove the a priori estimates \eqref{estiLdeux} and \eqref{estiLinfiny}. 
The existence of a solution then follows by a topological degree argument, as for the stationary case.

\medskip
Let $M \in \llbracket 0, N-1\rrbracket$; taking $\bfv=\bfu^{n+1}$ in \eqref{eq:weak:ins}, multiplying by $\deltat$ and summing the result over $n\in\llbracket 0, M\rrbracket$, we obtain thanks to Lemma \ref{lem:skew-sym} and to the Cauchy-Schwarz inequality:
\[
\sum_{n=0}^M \sum_{i=1}^d \sum_{\edge\in\edgesi} |D_\edge|\ u_\edge^{n+1} (u_\edge^{n+1}-u_\edge^n) 
+ \sum_{n=0}^M \deltat\ \|\bfu^{n+1}\|_{1,\edges,0}^2 
\le \sum_{n=0}^M \deltat\ \|\bff^{n+1}\|_{L^2(\Omega)^d}\ \|\bfu^{n+1}\|_{L^2(\Omega)^d}.
\]
Using the fact that for all $a, b \in \xR$, $2a(a-b) = (a-b)^2 + a^2 - b^2$ for the first term of the left-hand side and the discrete Poincar\'e and Young inequalities for the right-hand side, we get that 
\[
\|\bfu^{M+1}\|_{L^2(\Omega)^d}^2 + \sum_{n=0}^M \deltat\ \|\bfu^{n+1}\|_{1,\edges,0}^2 \le \|\bfu^0\|_{L^2(\Omega)^d}^2 + C_P^2\ \|\bff\|_{L^2(0,T;L^2(\Omega)^d)}^2,
\]
where $C_P >0$ depends only on $\Omega$.
On one hand, this inequality yields the $L^\infty$-estimate \eqref{estiLinfiny}; on the other hand, taking $M=N-1$, we get the $L^2$-estimate \eqref{estiLdeux}.
\end{proof}

\medskip
Next we turn to an estimate on the discrete time derivative. 
To this end, we introduce the following discrete dual norms on $\Hmeshzero$ and $\bfX_{\edges,\deltat}$:
\begin{equation} \label{dualnorm}
\begin{array}{l}
\displaystyle \bfv \in \Hmeshzero \mapsto \|\bfv\|_{\bfE_\edges'}=
\max \{ \Bigl| \int_\Omega \bfv \cdot\bfvarphi \dx \Bigr|~; \ \bfvarphi\in \bfE_\edges \mbox{ and } \|\bfvarphi\|_{1,\edges,0}\leq 1\},
\\[2ex] \displaystyle
\bfv \in \bfX_{\edges,\deltat} \mapsto
\|\bfv\|_{L^{4/3}(0,T;\bfE'_\edges)}=\left(\sum_{n=0}^{N-1} \deltat\ \|\bfv^{n+1}\|_{\bfE'_\edges}^{4/3} \right)^{3/4}.
\end{array} \end{equation}

\medskip
\begin{lemma}[Estimate on the dual norm of the velocity discrete time derivative] \label{lem:estderive}
Let $\bfu\in \bfX_{\edges,\deltat}$ be a solution to \eqref{eq:scheme:ins}. 
Then there exists $C > 0$ depending only on $\bfuini$, $\Omega$, $\bar \bff$ and, in a non-decreasing way, on $\eta_\mesh$, such that:
\[
\|\eth_t \bfu \|_{L^{4/3 }(0,T;\bfE'_\edges)}\leq C.
\]
\end{lemma}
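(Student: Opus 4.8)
The plan is to take an arbitrary $\bfv\in\bfE_\edges$ with $\|\bfv\|_{1,\edges,0}\le 1$ in the weak formulation \eqref{eq:weak:ins}, bound each term of
\[
\int_\Omega \eth_t\bfu^{n+1}\cdot\bfv\dx
= -\,[\bfu^{n+1},\bfv]_{1,\edges,0} \;-\; b_\edges(\bfu^{n+1},\bfu^{n+1},\bfv) \;+\; \int_\Omega \bff^{n+1}\cdot\bfv\dx,
\]
and then pass from the resulting pointwise-in-time bound on $\|\eth_t\bfu^{n+1}\|_{\bfE_\edges'}$ to the $L^{4/3}(0,T;\bfE_\edges')$ bound by summing (the $4/3$ power of) that inequality over the time steps and invoking the energy estimates \eqref{estiLdeux}--\eqref{estiLinfiny} of Lemma \ref{lem:est-vit-inst}. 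For the diffusion term I would use the Cauchy--Schwarz inequality for the inner product $[\cdot,\cdot]_{1,\edges,0}$, giving $|[\bfu^{n+1},\bfv]_{1,\edges,0}|\le\|\bfu^{n+1}\|_{1,\edges,0}$; for the source term, the Cauchy--Schwarz inequality followed by the discrete Poincar\'e inequality, giving $|\int_\Omega\bff^{n+1}\cdot\bfv\dx|\le C_\Omega\,\|\bff^{n+1}\|_{L^2(\Omega)^d}$.

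The convection term is the delicate one. I would start from estimate \eqref{eq:contA}, which yields $|b_\edges(\bfu^{n+1},\bfu^{n+1},\bfv)|\le C_{\eta_\mesh}\,\|\bfu^{n+1}\|_{L^4(\Omega)^d}^2$, and then combine it with a discrete Gagliardo--Nirenberg (Ladyzhenskaya-type) interpolation inequality valid for $d\le 3$, namely $\|\bfu^{n+1}\|_{L^4(\Omega)^d}^2\le C_{\eta_\mesh}\,\|\bfu^{n+1}\|_{L^2(\Omega)^d}^{1/2}\,\|\bfu^{n+1}\|_{1,\edges,0}^{3/2}$ (obtainable e.g. from the discrete Sobolev embeddings of \cite{book} by interpolating $L^4$ between $L^2$ and $L^6$), together with the uniform $L^\infty(0,T;L^2(\Omega)^d)$ bound \eqref{estiLinfiny}. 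This gives $|b_\edges(\bfu^{n+1},\bfu^{n+1},\bfv)|\le C\,\|\bfu^{n+1}\|_{1,\edges,0}^{3/2}$, and hence, taking the supremum over admissible $\bfv$,
\[
\|\eth_t\bfu^{n+1}\|_{\bfE_\edges'}\le C\bigl(\|\bfu^{n+1}\|_{1,\edges,0}+\|\bfu^{n+1}\|_{1,\edges,0}^{3/2}+\|\bff^{n+1}\|_{L^2(\Omega)^d}\bigr),
\]
with $C$ depending only on $\Omega$, $\bar\bff$, $\bfuini$ and non-decreasingly on $\eta_\mesh$.

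It then remains to raise this to the power $4/3$ (using $(a+b+c)^{4/3}\le 3^{1/3}(a^{4/3}+b^{4/3}+c^{4/3})$), multiply by $\deltat$, and sum over $n\in\llbracket 0,N-1\rrbracket$. The convection contribution produces exactly $\sum_{n}\deltat\,\|\bfu^{n+1}\|_{1,\edges,0}^{2}$, controlled by \eqref{estiLdeux} --- this is precisely why the exponent $4/3$ appears in the statement. The diffusion and source contributions have exponent $4/3<2$, so they are handled by the discrete Jensen (H\"older) inequality on the finite interval, $\sum_n\deltat\,a_{n+1}^{4/3}\le\bigl(\sum_n\deltat\,a_{n+1}^2\bigr)^{2/3}T^{1/3}$: the diffusion term is again bounded via \eqref{estiLdeux}, and the source term via $\sum_n\deltat\,\|\bff^{n+1}\|_{L^2(\Omega)^d}^2\le\|\bar\bff\|_{L^2(\Omega\times(0,T))^d}^2$, which follows from Jensen applied to the space--time averages defining $\bff^{n+1}$. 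Collecting, $\|\eth_t\bfu\|_{L^{4/3}(0,T;\bfE_\edges')}^{4/3}\le C$, which is the claim.

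The main obstacle is the treatment of the convection term: the crude discrete Sobolev bound $\|\bfu^{n+1}\|_{L^4(\Omega)^d}\le C\|\bfu^{n+1}\|_{1,\edges,0}$ would force the appearance of $\sum_n\deltat\,\|\bfu^{n+1}\|_{1,\edges,0}^4$ (or $\|\cdot\|_{1,\edges,0}^{8/3}$ if one starts from \eqref{eq:contB}), which is \emph{not} controlled by the available a priori estimates. One genuinely needs the interpolation inequality that brings in the $L^2$-norm of the velocity, so that $\|\eth_t\bfu^{n+1}\|_{\bfE_\edges'}$ grows only like the $3/2$ power of $\|\bfu^{n+1}\|_{1,\edges,0}$; then $4/3$ is exactly the largest time-Lebesgue exponent for which the sum closes. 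Establishing that discrete Gagliardo--Nirenberg inequality on the MAC dual meshes (uniformly in the mesh, with constant non-decreasing in $\eta_\mesh$) is the one technical point to be pinned down carefully.
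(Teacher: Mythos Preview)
Your proposal is correct and follows essentially the same route as the paper: test \eqref{eq:weak:ins} against $\bfv\in\bfE_\edges$ with $\|\bfv\|_{1,\edges,0}\le 1$, bound the convection term via \eqref{eq:contA} and the interpolation $\|\bfu^{n+1}\|_{L^4}^2\le\|\bfu^{n+1}\|_{L^2}^{1/2}\|\bfu^{n+1}\|_{L^6}^{3/2}$ combined with \eqref{estiLinfiny}, then raise to the power $4/3$ and sum in time using \eqref{estiLdeux} and the discrete Sobolev inequality. The only cosmetic differences are that the paper keeps the $L^6$ norm until the final step before invoking Sobolev, and handles the subcritical exponents by $a^{4/3}\le a^2+1$ rather than H\"older in time.
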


\begin{proof} 
Taking $\bfv\in \bfE_\edges$ such that $\|\bfv\|_{1,\edges,0}\leq 1$ as test function in \eqref{eq:weak:ins}, we have, for $n \in \llbracket0, N-1\rrbracket$:
\[
\int_\Omega \eth_t \bfu^{n+1} \cdot \bfv \dx + \int_\Omega \bfnabla_\edgesd \bfu^{n+1} : \bfnabla_\edgesd \bfv \dx 
+ b_\edges(\bfu^{n+1},\bfu^{n+1},\bfv) = \int_\Omega \bff^{n+1} \cdot \bfv \dx.
\]
By Lemma \ref{lem:skew-sym} and thanks to the estimate \eqref{eq:contA}, we have:
\[
|b_\edges(\bfu^{n+1},\bfu^{n+1},\bfv)| \leq C_{\eta_\mesh} \|\bfu^{n+1}\|_{L^4(\Omega))^d}^2.
\]
Using the Cauchy-Schwarz inequality, we note that:
\[
\|\bfu^{n+1}\|_{L^4(\Omega)^d}^4=\int_\Omega |\bfu^{n+1}|\ |\bfu^{n+1}|^3 \dx
\le \|\bfu^{n+1}\|_{L^2(\Omega)^d} \|\bfu^{n+1}\|_{L^6(\Omega)^d}^3. 
\]
Therefore, thanks to the estimate \eqref{estiLinfiny} of Lemma \ref{lem:est-vit-inst} and to the discrete Poincar\'e inequality, there exists $\widetilde C_{\eta_\mesh}>0$ depending only on $\Omega$ and on the regularity of the mesh, such that:
\[
\int_\Omega \eth_t \bfu^{n+1} \cdot \bfv \dx \leq \widetilde C_{\eta_\mesh}\, \bigl(\|\bfu^{n+1}\|_{(L^6(\Omega))^d}^{3/2}
+\|\bfu^{n+1}\|_{1,\edges,0}+\|\bff^{n+1}\|_{(L^2(\Omega))^d}\bigr).
\]
Hence,
\begin{align*}
\|\eth_t \bfu^{n+1}\|_{\bfE'_\edges}^{4/3} 
&
\leq 9\ \widetilde C_{\eta_\mesh}^{4/3}\ \bigl( \|\bfu^{n+1}\|_{L^6(\Omega)^d}^2
+\|\bfu^{n+1}\|_{1,\edges,0}^{4/3} + \|\bff^{n+1}\|_{L^2(\Omega)^d}^{4/3}\bigr)
\\[1ex] &
\leq 9\ \widetilde C_{\eta_\mesh}^{4/3}\ \bigl( \|\bfu^{n+1}\|_{L^6(\Omega)^d}^2 
+\|\bfu^{n+1}\|_{1,\edges,0}^2 + \|\bff^{n+1}\|_{L^2(\Omega)^d}^2 + 2 \bigr).
\end{align*}
Multiplying this latter inequality by $\deltat$ and summing for $n \in \llbracket 0, N-1\rrbracket$, we get: 
\[
\|\eth_t \bfu\|_{L^{4/3}(0,T;\bfE'_\edges)}^{4/3} \leq 
9\ \widetilde C_{\eta_\mesh}^{4/3}\ \bigl( \|\bfu \|_{L^2(0,T,L^6(\Omega)^d)}^2
+\|\bfu \|_{L^2(0,T,\Hmeshzero)}^2 + \|\bar \bff\|_{L^2(0,T,L^2(\Omega)^d)}^2 + 2 T \bigr).
\]
We conclude by the discrete Sobolev inequality \cite[Lemma 3.5]{book} and thanks to the $L^2(0,T;\Hmeshzero)$-estimate \eqref{estiLdeux} of $\bfu$.
\end{proof}
%
% -----------------------------------------------------------------------------------------------------------------------
%
\subsection{Convergence analysis}

\begin{theorem}[Convergence of the scheme, time-dependent case] \label{theo:conv-instat}
Let $(\deltat_m)_{m\in \xN}$ and $(\mesh_m,\edges_m)_\mnn$ be a sequence of time steps and MAC grids (in the sense of Definition \ref{def:MACgrid}) such that $\deltat_m\rightarrow 0$ and $h_{\mesh_m} \to 0$ as $m \to +\infty$.
Assume that there exists $\eta >0$ such that $\eta_{\mesh_m} \le \eta$ for any $m\in \xN$ (with $\eta_{\mesh_m}$ defined by \eqref{regmesh}).
Let $\bfu_m$ be a solution to \eqref{eq:weak:ins} for $\deltat=\deltat_m$ and $(\mesh,\edges)=(\mesh_m,\edges_m)$.
Then there exists $\bar \bfu \in L^2(0,T; \bfE(\Omega))$ such that, up to a subsequence:
\begin{list}{-}{\itemsep=0.ex \topsep=0.5ex \leftmargin=1.cm \labelwidth=0.7cm \labelsep=0.3cm \itemindent=0.cm}
\item the sequence $(\bfu_m)_\mnn$ converges to $\bar \bfu$ in $L^{4/3}(0,T; L^2(\Omega)^d)$,
\item $\bar \bfu$ is a solution to the weak formulation \eqref{eq:cont-weak:ins}.
\item $\partial_t \bar \bfu \in L^{4/3}(0,T; E'(\Omega))$.
\end{list}
\end{theorem}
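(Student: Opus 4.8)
The plan is to follow the same road map as in the proof of Theorem~\ref{theo:conv-stat}, replacing the elliptic compactness argument by a discrete Aubin--Simon argument. First I would gather the a priori bounds already available: by Lemma~\ref{lem:est-vit-inst} the sequence $(\bfu_m)_\mnn$ is bounded in $L^2(0,T;\Hmeshmzero)$ and in $L^\infty(0,T;L^2(\Omega)^d)$ (estimates \eqref{estiLdeux} and \eqref{estiLinfiny}), and by Lemma~\ref{lem:estderive} the discrete time derivative $(\eth_t\bfu_m)_\mnn$ is bounded in $L^{4/3}(0,T;\bfE'_{\edges_m})$. Combining the space-translate estimates associated with the discrete $H^1_0$-bound (used exactly as in the steady case, following \cite[Theorem 14.2]{book}) with the time-translate estimates obtained from the dual-norm bound on $\eth_t\bfu_m$, a discrete Aubin--Simon compactness result yields, up to a subsequence, the strong convergence $\bfu_m\to\bar\bfu$ in $L^{4/3}(0,T;L^2(\Omega)^d)$. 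Interpolating this convergence with the $L^\infty(0,T;L^2)$ bound upgrades it to $L^p(0,T;L^2(\Omega)^d)$ for every $p<\infty$, and interpolating further with the $L^2(0,T;L^6)$ bound coming from the discrete Sobolev inequality \cite[Lemma 3.5]{book} gives strong convergence in $L^2(0,T;L^4(\Omega)^d)$, which is the space-time integrability needed to handle the quadratic convection term.

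Next I would identify the limit. From the $L^2(0,T;\Hmeshmzero)$ bound the discrete gradients $\bfnabla_{\edgesd_m}\bfu_m$ are bounded in $L^2(\Omega\times(0,T))^{d\times d}$; arguing face by face with Lemma~\ref{lem:int_part} and the strong consistency of the discrete partial derivatives (Lemma~\ref{lem:grad_v_cons}), exactly as in the steady proof, one gets $\bar\bfu\in L^2(0,T;H^1_0(\Omega)^d)$ together with $\bfnabla_{\edgesd_m}\bfu_m\rightharpoonup\bfnabla\bar\bfu$ weakly in $L^2(\Omega\times(0,T))^{d\times d}$. Testing the discrete mass equation \eqref{eq:scheme:ins-pression} against $\Pi_{\mesh_m}\psi$ for $\psi\in C_c^\infty(\Omega\times(0,T))$, using the duality Lemma~\ref{lem:duality}, the gradient consistency Lemma~\ref{lem:consgrad} and integrating in time, yields $\dive\bar\bfu=0$ a.e., hence $\bar\bfu\in L^2(0,T;\bfE(\Omega))$. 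Finally, a discrete integration by parts in time together with the bound of Lemma~\ref{lem:estderive} identifies the limit of $\eth_t\bfu_m$ with $\partial_t\bar\bfu$ in the distributional sense and gives $\partial_t\bar\bfu\in L^{4/3}(0,T;E'(\Omega))$.

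It then remains to pass to the limit in the scheme. Fix $\bfv\in L^2(0,T;\bfE(\Omega))\cap C_c^\infty(\Omega\times[0,T))$ and use $\bfv_m^{n+1}=\widetilde{\mathcal P}_{\edges_m}\bfv(\cdot,t_{n+1})$, which is discretely divergence-free by Lemma~\ref{lem:fortin}, as test function in \eqref{eq:weak:ins}; multiply by $\deltat_m$ and sum over $n$. The discrete time-derivative term is rewritten by Abel summation, transferring the difference quotient onto $\bfv_m$ and producing the initial term $-\int_\Omega\bfu^0_m\cdot\bfv_m^1\dx$; since $\bfu^0_m=\widetilde{\mathcal P}_{\edges_m}\bfu_0\to\bfu_0$ in $L^2(\Omega)^d$, this converges to $-\int_\Omega\bfu_0\cdot\bfv(\cdot,0)\dx$ while the transferred term converges to $-\int_0^T\int_\Omega\bar\bfu\cdot\partial_t\bfv\dx\dt$ by the strong convergence of $\bfu_m$. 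The diffusion term converges to $\int_0^T\int_\Omega\bfnabla\bar\bfu:\bfnabla\bfv\dx\dt$ by the weak convergence of $\bfnabla_{\edgesd_m}\bfu_m$ tested against the strongly convergent $\bfnabla_{\edgesd_m}\bfv_m$ (Lemma~\ref{lem:grad_v_cons}), and the right-hand side converges because $\bff_m\to\bar\bff$ in $L^2(\Omega\times(0,T))^d$. For the convection term I would use the space-time analogue of Lemma~\ref{lem:convconv}: writing $b_{\edges_m}(\bfu_m,\bfu_m,\bfv_m)$ through the reformulation of Lemma~\ref{lem:b_new_form} and invoking the convergence of the velocity reconstructions (Lemma~\ref{lem:conv-vel-reco}) together with the $L^2(0,T;L^4)$ strong convergence and the strong consistency of $\eth_j\bfv_m$, it converges to $-\sum_{i,j}\int_0^T\int_\Omega\bar u_i\,\bar u_j\,\partial_j v_i\dx\dt = \int_0^T\int_\Omega((\bar\bfu\cdot\bfnabla)\bar\bfu)\cdot\bfv\dx\dt$. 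A density argument in the test space then shows that $\bar\bfu$ satisfies \eqref{eq:cont-weak:ins}.

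I expect the main obstacle to be the discrete Aubin--Simon step: setting up a uniform-in-$m$ time-translate estimate out of the mesh-dependent dual-norm bound $\|\eth_t\bfu_m\|_{L^{4/3}(0,T;\bfE'_{\edges_m})}\le C$ and making a compactness theorem applicable despite the fact that the dual norms $\|\cdot\|_{\bfE'_{\edges_m}}$ vary with $m$; this is tightly linked to extracting enough strong space-time integrability of $(\bfu_m)_\mnn$ to control the quadratic convection term in the passage to the limit.
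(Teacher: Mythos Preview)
Your proposal is correct and follows essentially the same route as the paper: a priori bounds from Lemmas~\ref{lem:est-vit-inst} and~\ref{lem:estderive}, discrete Aubin--Simon compactness to get $L^{4/3}(0,T;L^2)$ convergence, interpolation with the $L^\infty(0,T;L^2)$ bound to reach $L^2(0,T;L^2)$, and then passage to the limit in the scheme with Fortin-interpolated divergence-free test functions and Abel summation for the time term. The obstacle you correctly single out --- running Aubin--Simon with the $m$-dependent dual norms $\|\cdot\|_{\bfE'_{\edges_m}}$ --- is handled in the paper not by a direct time-translate estimate but by invoking the abstract framework of Theorem~\ref{th-Aubin-Simon} for compact-continuous sequences of spaces (Definition~\ref{def:compact-continuous}), whose hypotheses are verified straightforwardly; similarly, the regularity $\bar\bfu\in L^2(0,T;H^1_0(\Omega)^d)$ is obtained via the abstract Theorem~\ref{reg-limit} rather than by the hands-on Lemma~\ref{lem:int_part} argument you sketch, though your route would work as well.
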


\begin{proof}
We proceed in four steps.

\medskip
\noindent {\bf First step: compactness in $L^{4/3}(0,T;L^2(\Omega)^d)$} --
The first step consists in applying the discrete Aubin-Simon theorem \ref{th-Aubin-Simon} in order to obtain the existence of a subsequence of $(\bfu_m)_{\mnn}$ which converges to $\bar \bfu$ in $L^{4/3}((0,T);L^2(\Omega)^d)$. 
In our setting, we apply Theorem \ref{th-Aubin-Simon} with $p= 4/3$; the Banach space $B$ is $L^2(\Omega)^d$, and the spaces $X_m$ and $Y_m$ consist in the space $\Hmeshmzero$ endowed with the norms defined respectively by Relations \eqref{norm} and \eqref{dualnorm}.
By \cite[Theorem 14.2]{book} and the Kolmogorov compactness theorem (see e.g. \cite[Theorem 14.1]{book}), we obtain that $(X_m,Y_m)_{m\in\xN}$ is compactly embedded in $B$ in the sense of Definition \ref{Compact-embeded}.
Let us then show that the sequence $({X_m},Y_m)_{\mnn}$ is compact-continuous in $L^2(\Omega)^d$ in the sense of Definition \ref{def:compact-continuous}.
Let $\bfv_m \in \Hmeshmzero$ such that $(\|\bfv_m\|_{1,\edges_m,0})_{\mnn}$ is bounded and $\|\bfv_m\|_{\bfE'_m} \rightarrow 0$ as $m\to +\infty$.
Assume that $\bfv_m\rightarrow \bfv$ in $(L^2(\Omega))^d$; by definition \eqref{dualnorm} of the dual norm, we have:
\[
\int_\Omega \bfv_m \cdot \bfv_m \dx \le \|\bfv_m \|_{1,\edges_m,0}\ \|\bfv_m \|_{\bfE'_m}.
\]
Passing to the limit in this inequality as $m \to \infty$, we get that $ \bfv= 0$, so that the sequence $({X_m},Y_m)_{\mnn}$ is compact-continuous in $L^2(\Omega)^d$.
We now check the three assumptions (H1), (H2) and (H3) of Theorem \ref{th-Aubin-Simon}: by Lemma \ref{lem:est-vit-inst}, the sequence $\|\bfu_m\|_{L^1(0,T;\Hmeshzero)}$ is bounded, and thanks to the discrete Poincar\'e inequality, the sequence $(\bfu_m)_\mnn$ is also bounded in $L^{4/3}(0,T;(L^2(\Omega)^d))$; 
furthermore, the sequence $\|\eth_t \bfu_m\|_{L^{4/3}(0,T;\bfE'_\edges)}$ is bounded by Lemma \ref{lem:estderive}.
Hence, Theorem \ref{th-Aubin-Simon} applies and there exists $\bar \bfu\in L^{4/3}(0,T;L^2(\Omega)^d)$ such that, up to a subsequence, \[\bfu_m\rightarrow \bar \bfu \text{ in } L^{4/3}\left(0,T;L^2(\Omega)^d\right)\mbox{ as } m \to +\infty.\]

\medskip
\noindent {\bf Step 2: Convergence in $L^2 (0,T; L^2(\Omega)^d)$} --
Thanks to Lemma \ref{lem:est-vit-inst}, the sequence $(\bfu_m)_\mnn$ is bounded in $L^\infty(0,T,L^2(\Omega)^d)$, and therefore, there exists $\hat{\bfu}\in L^\infty(0,T;L^2(\Omega)^d)$ and a subsequence $(\bfu_{\phi(m)})_{m\in\xN}$ converging to $ \hat{\bfu}$ $\star$-weakly in $L^\infty(0,T;L^2(\Omega)^d)$.
Since $\bfu_{\phi(m)}\rightarrow \bar{\bfu}$ in $L^{4/3}(0,T;L^2(\Omega)^d)$, the uniqueness of the limit in the sense of distributions implies that $\bar \bfu= \hat{\bfu}$ so that $\bar \bfu \in L^\infty(0,T;L^2(\Omega)^d)$. 
By a classical interpolation result on $L^p(0,T)$ spaces, we have:
\[
\|\bar \bfu -\bfu_m\|_{L^2(0,T; L^2(\Omega)^d)}\leq 
\|\bar \bfu -\bfu_m\|_{L^{4/3}(0,T; L^2(\Omega)^d)}^{2/3}\ \|\bar \bfu -\bfu_m\|_{L^\infty(0,T; L^2(\Omega)^d)}^{1/3},
\]
which implies that $\bfu_m$ converges towards $\bar \bfu$ in $L^2 (0,T; L^2(\Omega)^d)$ as $m$ tends to infinity.

\medskip
\noindent {\bf Step 3: Weak consistency of the scheme} --
The notion of weak consistency that we use here is the Lax-Wendroff notion: we show that if a sequence of approximate solutions of the scheme converges to some limit, then this limit is a weak solution to the original problem. 
Let us then show that $\bar \bfu$ satisfies \eqref{eq:cont-weak:ins}. 
Let $\bfvarphi \in C_c^\infty(\Omega\times[0,T))^d$, such that $\dive \bfvarphi =0$.
By Lemma \ref{lem:fortin}, we have $\dive_{\mesh_m}\, \widetilde{\mathcal P}_{\edges_m} \bfvarphi(\cdot,t_n)=0$, and so we can take $\bfvarphi_m^n = \widetilde{\mathcal P}_{\edges_m} \bfvarphi(\cdot,t_n) \in \bfE_\edges$ as test function in \eqref{eq:weak:ins}~; multiplying by $\deltat_m$ and summing for $n= \{0, \ldots, N_m-1\}$ (with $N_m \deltat_m = T$), we then get:
\begin{multline*}
\sum_{n=0}^{N_m-1} \deltat_m \Bigl(\int_\Omega \eth_t \bfu_m^{n+1} \cdot \bfvarphi_m^n \dx \dt
+ \int_\Omega \bfnabla_{\edgesd_m}\bfu_m^{n+1} : \bfnabla_{\edgesd_m}\bfvarphi_m^n \dx 
\\
+ b_{\edges_m}\bigl(\bfu_m^{n+1}, \bfu_m^{n+1},\bfvarphi_m^n \bigr)
- \int_\Omega \bff_m^{n+1} \cdot \bfvarphi_m^n \dx \Bigr) =0,
\end{multline*}
where the subscript $m$ in $\bff_m^{n+1}$ is here to recall that the discrete right-hand side is an interpolation of the continuous one, which depends on the mesh and time step.
The first term of the left-hand side reads $T_{1,m}= \sum_{i=1}^d T_{1,m,i}$ with:
\begin{align*}
T_{1,m,i}
&
= \sum_{n=0}^{N_m-1} \sum_{\edge\in\edgesi} |D_\edge| \ (u^{n+1}_{m,\edge}-u^n_{m,\edge})\ \varphi_{m,\edge}^n
\\ &
= -\sum_{n=0}^{N_m-1} \deltat \sum_{\edge\in\edgesi} |D_\edge|\ u_{m,\edge}^{n+1} \frac{\varphi_{m,\edge}^{n+1}-\varphi_{m,\edge}^n}{\deltat}
-\sum_{\edge\in\edgesi} |D_\edge|\ u_{m,\edge}^0\ \varphi_{m,\edge}^0
\\[0.5ex] &
= -\int_0^T \int_\Omega u_{m,i}(\bfx,t)\ \eth_t \varphi_{m,i}(\bfx,t) \dx\dt
-\int_\Omega \widetilde{\mathcal P}\ei_{\edges_m} \bar u_{0,i}(\bfx)\ \varphi_m^0(\bfx)\dx.
\end{align*}
We know that $u_{m,i} \to \bar \ui$ in $L^2(0,T;L^2(\Omega))$ as $m \to +\infty$.
By definition, the discrete partial derivative $\eth_t \varphi_{m,i}$ converges uniformly to $\partial_t \varphi_i$ as $m \to +\infty$.
Moreover, $\widetilde{\mathcal P}\ei_{\edges_m} \bar u_{0,i}$ converges to $\bar u_{0,i}$ in $L^q(\Omega)$ for all $q$ in $[1, 2]$, and $\varphi_{m,\edge}^0$ converges to $\varphi_i(\cdot,0)$ in $L^q(\Omega)$ for all $q$ in $[1, \infty]$.
Hence:
\begin{equation}\label{lim-ut}
T_{1,m} \to -\int_0^T \int_\Omega \bar \bfu(\bfx,t) \cdot \partial_t \bfvarphi(\bfx,t) \dx\dt 
- \int_\Omega \bar\bfu_0(\bfx) \cdot \bfvarphi(\bfx,0) \dx \text{ as } m \to +\infty.
\end{equation}
Let us then study the second term of the left-hand side. 
We have:
\[
\int_\Omega \bfnabla_{\edgesd_m} \bfu_m^{n+1} : \bfnabla_{\edgesd_m} \bfvarphi_m^n \dx
= \int_\Omega \bfnabla_{\edgesd_m} \bfu_m^{n+1} : \bfnabla_{\edgesd_m} \bfvarphi_m^{n+1} \dx
+ \int_\Omega \bfnabla_{\edgesd_m} \bfu_m^{n+1}: \bfnabla_{\edgesd_m} (\bfvarphi_m^n-\bfvarphi_m^{n+1}) \dx.
\]
By the same arguments as in the stationary case, we get that
\[
\sum_{n=0}^{N_m-1} \deltat_m \int_\Omega \bfnabla_{\edgesd_m} \bfu_m^{n+1} : \bfnabla_{\edgesd_m} \bfvarphi_m^{n+1} \dx
\to \int_0^T \int_\Omega \bfnabla \bar\bfu : \bfnabla \bfvarphi \dx \dt \mbox{ as } m \to +\infty.
\]
Moreover, thanks to the regularity of $\bfvarphi$,
\[
\int_\Omega \bfnabla_{\edgesd_m} \bfu_m^{n+1} : \bfnabla_{\edgesd_m} (\bfvarphi_m^{n+1} -\bfvarphi_m^n)\dx
\le \deltat_m\ C_\varphi\ \|\bfu_m^{n+1}\|_{1,\edges,0} 
\]
where $C_\varphi$ only depends on $\bfvarphi$. 
We thus obtain that
\[
\sum_{n=0}^{N_m-1} \deltat_m \int_\Omega \bfnabla_{\edgesd_m} \bfu_m^{n+1} : \bfnabla_{\edgesd_m} (\bfvarphi_m^{n+1} -\bfvarphi_m^n) \dx
\to 0 \mbox{ as } m \to +\infty.
\]
Similarly, we have:
\[
\sum_{n=0}^{N_m-1} \deltat_m \int_\Omega \bff_m^{n+1} \cdot (\bfvarphi_m^n - \bfvarphi_m^{n+1}) \dx 
\le \deltat_m\ C_\bfvarphi\ \| \bar\bff \|_{L^2(\Omega\times(0,T))^d} \to 0 \mbox{ as } m \to +\infty,
\]
so that
\[
\sum_{n=0}^{N_m-1} \deltat_m \int_\Omega \mathcal \bff_m^{n+1} \cdot \bfvarphi_m^n \dx
\to \int_0^T \int_\Omega \bar \bff \cdot \bfvarphi \dx\dt\mbox{ as } m \to +\infty.
\]
The convection term is dealt with by remarking that an easy adaptation of Lemma \ref{lem:convconv} to the time-dependent framework implies that 
\[
\sum_{m=0}^{N-1} \deltat_m\, b_\edges(\bfu_m^{n+1}, \bfu_m^{n+1},\bfvarphi_m^n) \to \int_0^T b(\bar \bfu, \bar \bfu, \bfvarphi) \dt \mbox{ as } \nti.
\] 
Therefore, $\bar \bfu$ is indeed a solution of \eqref{eq:cont-weak:ins}.

\medskip
\noindent {\bf Step 4: Regularity of the limit} --
Thanks to \cite[Theorems 14.1 and 14.2]{book}, the sequence of normed vector spaces $(\Hmeshmzero, \|\cdot \|_{1,\edges_m,0})_\mnn$ is $L^2(\Omega)^d$-limit-included in $H_0^1 (\Omega)^d$ in the sense of Definition \ref{def:blimit}.
We have $\bfu_m \rightarrow \bar \bfu$ in $L^2(0,T,L^2(\Omega)^d)$ as $m \to +\infty$ and $(\|\bfu_m \|_{L^2(0,T;\Hmeshmzero)})_{\mnn}$ is bounded thanks to Lemma \ref{lem:est-vit-inst}. 
Therefore Theorem \ref{reg-limit} applies, so that $\bar \bfu\in L^2(0,T;H^1_0(\Omega)^d)$; then, adapting the proof that $\dive \bar \bfu=0$ of the stationary case (see the proof of Theorem \ref{theo:conv-stat}), we get that $\bar \bfu\in L^2(0,T;\bfE(\Omega))$.

\medskip
Let us finally show that $\partial_t \bar \bfu \in L^{4/3}(0,T; E'(\Omega))$.
Let $\bfvarphi \in C_c^\infty(\Omega \times (0,T))$ such that $\dive \bfvarphi = 0$. 
Let $\bfvarphi_m \in \bfX_{\edges_m,\deltat_m}$ be defined by
\[
\bfvarphi_m^{n+1} = \frac 1 \deltat \int_{t_n}^{t_{n+1}} \widetilde {\mathcal P}_{\edges_m} \bfvarphi (\cdot, s) \,{\rm d}s
\mbox{ for } t \in [t_n,t_{n+1}[,\ n \in \llbracket 0, N-1\rrbracket.
\]
Note that, for $n \in \llbracket 0, N-1\rrbracket$, $\bfvarphi_m^{n+1}$ is discretely divergence-free, \ie\ $\bfvarphi_m^{n+1} \in \bfE_{\edges_m}$.
Thanks to Lemma \ref{lem:estderive}, there exists $C\ge 0$ depending only on $\bfu_0$, $\Omega$, $\eta$ and $\bar \bff$ such that:
\[
\int_0^T \int_\Omega \eth_t \bfu_m \cdot \bfvarphi_m \dx \dt \le C\ \|\bfvarphi_m \|_{L^4(0,T; \Hmeshzero)}.
\]
By Lemma \ref{lem:fortin}, there exists $C_2$ depending only on $\eta$ and $\Omega$, such that $ \| \bfvarphi_m \|_{L^4(0,T; \Hmeshzero)} \le C_2\| \bfvarphi \|_{L^4(0,T; \bfE(\Omega))},$ where $\bfE(\Omega)$ is endowed with the $H^1_0$ norm.
Hence, passing to the limit as $m \to +\infty$ in a similar way as for $T_{1,m}$ in Step 3, we get that
\[
\int_0^T \int_\Omega \bfu \cdot \partial_t \bfvarphi \dx \le C C_2\| \bfvarphi \|_{L^4(0,T;\bfE(\Omega))}.
\]
We then obtain that $\partial_t\bar \bfu \in L^{4/3}(0,T; \bfE'(\Omega))$ by density (see \cite[Theorem 1.6]{tem-77-nav} for the density of divergence-free regular functions in divergence-free functions of $H^1_0(\Omega)^d$).
\end{proof}

\begin{remark}[Uniqueness and convergence of the whole sequence] \label{rem-uniq-instat}
	In the case where uniqueness of the solution is known, then again the whole sequence converges ; this is for instance the case for  $d=2$, see e.g.  \cite[Theorem 3.2]{tem-77-nav},   under a small data assumption \cite[Theorem 3.7]{tem-77-nav} or under a short time assumption \cite[Theorem 3.11]{tem-77-nav}.
\end{remark}

\subsection{Case of the unsteady Stokes equations} \label{sec:stokes}

In the case of the unsteady Stokes equations, that is Problem \eqref{eq:ns:ins} where the nonlinear convection term in \eqref{qdm:ins} is omitted, stronger estimates can be obtained, which entail the weak convergence of the pressure. 
To obtain these bounds, the assumption that $\bfuini\in H^1(\Omega)^d$ and that $\dive \bfuini = 0$ plays a central role.

\medskip
Let us consider the following weak formulation of the unsteady Stokes problem:
\begin{multline} \label{stokes-weak:con}
\mbox{Find } (\bar \bfu, \bar p) \in L^2(0,T;\bfE(\Omega))\times L^2(0,T;L^2_0(\Omega))
\mbox{ such that } \forall \bfvarphi \in C_c^\infty([0,T[\times\Omega)^d,
\\
-\int_0^T \int_\Omega \bar \bfu(\bfx,t)\cdot\partial_t \bfvarphi(\bfx,t)\dx\dt - \int_\Omega \bfuini(\bfx)\cdot \bfvarphi(\bfx,0)\dx
+\int_0^T \int_\Omega \bfnabla \bar \bfu(\bfx,t): \bfnabla \bfvarphi(\bfx,t) \dx\dt
\\ 
-\int_0^T \int_\Omega \bar p\ \dive \bfvarphi\dx\dt 
=\int_0^T \int_\Omega \bar \bff(\bfx,t) \cdot \bfvarphi(\bfx,t) \dx \dt.
\end{multline}
Note that this formulation does not use divergence-free test functions as in \eqref{eq:cont-weak:ins}, so the pressure still appears.

\medskip
\noindent {\bf The scheme} --
We look for an approximation $(\bfu,p) \in \bfX_{\edges,\deltat}\times Y_{\mesh,\deltat}$ of $(\bfu,p)$ solution to the problem \eqref{stokes-weak:con};
we consider the time-implicit MAC scheme which reads: 
\begin{subequations}\label{eq:schstokes:ins}
\begin{align} \nonumber & 
\mbox{{\bf Initialization} :}
\\ \label{eq:schstokes-init} & \hspace{15ex}
\bfu^0= \widetilde{\mathcal P}_\edges \bfu_0.
\\[1ex] \nonumber &
\mbox{\bf Step } n,\ n \in \llbracket 0, N-1 \rrbracket.
\mbox{ Solve for } \bfu^{n+1} \mbox{ and } p^{n+1}:
\\[0.5ex] \label{eq:schstokes:ins-space} & \hspace{15ex}
\bfu^{n+1} \in \Hmeshzero,\ p^{n+1} \in L_{\mesh,0},
\\[0.5ex] \label{eq:schstokes:ins-vitesse} & \hspace{15ex}
\eth_t \bfu^{n+1} - \Delta_\edges \bfu^{n+1} + \bfnabla_\edges p^{n+1} = \bff^{n+1},
\\[0.5ex] \label{eq:schstokes:ins-pression} & \hspace{15ex}
\dive_\mesh\,\bfu^{n+1} = 0.
\end{align} \end{subequations}
Note that the choice of the discretization of the initial condition in \eqref{eq:schstokes-init}, together with the assumption $\dive \bfu_0=0$, implies that $\dive_\mesh\, \bfu^0=0$; this fact is important for the obtention of the estimates.
A weak formulation of \eqref{eq:schstokes:ins-space}--\eqref{eq:schstokes:ins-pression} reads:
\begin{multline} \label{stokes:weak}
\mbox{Find } (\bfu^{n+1},p^{n+1}) \in \bfE_\edges \times L_{\mesh,0} \mbox{ such that, } \forall \bfv \in \Hmeshzero,
\\
\int_\Omega \eth_t \bfu^{n+1} \ \bfv \dx + \int_\Omega \bfnabla_\edgesd \bfu^{n+1} : \bfnabla_\edgesd \bfv \dx 
- \int_\Omega p^{n+1} \dive_\mesh\, \bfv \dx
= \int_\Omega \bff^{n+1} \cdot \bfv \dx.
\end{multline}
The estimates of Lemma \ref{lem:est-vit-inst} on the approximate solutions obtained in the case of the Navier-Stokes equations are of course still valid.
However we get stronger estimates on $\eth_t \bfu$ and on $p$, as we proceed to show.

\begin{lemma}[Estimates on the velocity and its discrete time derivative] \label{lem:estideri}
Let $\bfu \in \bfX_{\edges,\deltat}$ be a solution to \eqref{eq:schstokes:ins}; then there exists $C > 0$ depending only on $\bfuini$, $\Omega$, $\bar \bff$ and, in a non-decreasing way, on $\eta_\mesh$, such that:
\begin{align} \label{estideux} &
\|\eth_t\ \bfu\|_{L^2(0,T;L^2(\Omega)^d)} \leq C,
\\[0.5ex] \label{infty} &
\|\bfu\|_{L^\infty(0,T;\Hmeshzero)}\leq C.
\end{align}
\end{lemma}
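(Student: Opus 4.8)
The plan is to run the standard implicit-Euler energy argument, but testing the discrete momentum balance with the \emph{discrete time derivative} rather than with the velocity itself. Concretely, I would take $\bfv = \eth_t \bfu^{n+1} = (\bfu^{n+1}-\bfu^n)/\deltat$ in the weak formulation \eqref{stokes:weak}. The one structural point that makes this work is that this test function is discretely divergence-free: for $n \in \llbracket 0, N-1 \rrbracket$ one has $\dive_\mesh\,\bfu^{n+1} = 0$ by \eqref{eq:schstokes:ins-pression}, while $\dive_\mesh\,\bfu^0 = \dive_\mesh\,(\widetilde{\mathcal P}_\edges \bfu_0) = \mathcal P_\mesh(\dive \bfu_0) = 0$ by Lemma \ref{lem:fortin} (relation \eqref{conserv-div-interp}) together with the hypothesis $\dive \bfu_0 = 0$; hence $\dive_\mesh\,(\eth_t \bfu^{n+1}) = 0$ and the pressure contribution $\int_\Omega p^{n+1}\,\dive_\mesh\,(\eth_t\bfu^{n+1})\dx$ drops out. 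This is exactly where the assumptions on the initial datum come in.

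With this choice \eqref{stokes:weak} reduces to
\[
\|\eth_t \bfu^{n+1}\|_{L^2(\Omega)^d}^2 + [\bfu^{n+1}, \eth_t \bfu^{n+1}]_{1,\edges,0} = \int_\Omega \bff^{n+1} \cdot \eth_t \bfu^{n+1} \dx .
\]
For the diffusion term I would use the polarization identity of the inner product $[\cdot,\cdot]_{1,\edges,0}$, i.e. $[\bfu^{n+1},\bfu^{n+1}-\bfu^n]_{1,\edges,0} = \tfrac12\bigl(\|\bfu^{n+1}\|_{1,\edges,0}^2 - \|\bfu^n\|_{1,\edges,0}^2 + \|\bfu^{n+1}-\bfu^n\|_{1,\edges,0}^2\bigr)$, and for the right-hand side Young's inequality, $\int_\Omega \bff^{n+1}\cdot\eth_t\bfu^{n+1}\dx \le \tfrac12\|\bff^{n+1}\|_{L^2(\Omega)^d}^2 + \tfrac12\|\eth_t\bfu^{n+1}\|_{L^2(\Omega)^d}^2$, the last term being absorbed on the left. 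Multiplying by $2\deltat$, summing over $n \in \llbracket 0, M\rrbracket$ for an arbitrary $M \le N-1$, telescoping the diffusion part and discarding the nonnegative increments $\|\bfu^{n+1}-\bfu^n\|_{1,\edges,0}^2$ leaves
\[
\sum_{n=0}^{M}\deltat\,\|\eth_t\bfu^{n+1}\|_{L^2(\Omega)^d}^2 + \|\bfu^{M+1}\|_{1,\edges,0}^2 \le \|\bfu^0\|_{1,\edges,0}^2 + \sum_{n=0}^{N-1}\deltat\,\|\bff^{n+1}\|_{L^2(\Omega)^d}^2 .
\]

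It remains only to bound the right-hand side by a constant independent of the mesh and the time step. Since $\bfu_0 \in \bfE(\Omega) \subset H^1_0(\Omega)^d$, estimate \eqref{norme-h1-interp} of Lemma \ref{lem:fortin} gives $\|\bfu^0\|_{1,\edges,0} = \|\widetilde{\mathcal P}_\edges \bfu_0\|_{1,\edges,0} \le C_{\eta_\mesh}\,\|\bfnabla \bfu_0\|_{L^2(\Omega)^{d\times d}}$ --- and this is the other place where the $H^1$-regularity of $\bfu_0$ is genuinely needed --- while a Jensen's inequality estimate on the space-time averages defining $\bff^{n+1}$ yields $\sum_{n=0}^{N-1}\deltat\,\|\bff^{n+1}\|_{L^2(\Omega)^d}^2 \le \|\bar \bff\|_{L^2(\Omega\times(0,T))^d}^2$. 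Taking $M = N-1$ then gives \eqref{estideux}, and taking the maximum over $M \in \llbracket 0, N-1\rrbracket$ gives \eqref{infty}. I do not anticipate a real obstacle here: beyond the two hypotheses on $\bfu_0$ just highlighted (one for the vanishing of the pressure term, one for the control of $\|\bfu^0\|_{1,\edges,0}$), the proof is simply the discrete counterpart of the classical ``test by $\partial_t \bfu$'' energy estimate for the Stokes system.
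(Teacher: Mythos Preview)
Your proposal is correct and follows essentially the same route as the paper: test \eqref{stokes:weak} with $\eth_t\bfu^{n+1}$, use that $\dive_\mesh(\eth_t\bfu^{n+1})=0$ (thanks to \eqref{eq:schstokes:ins-pression} and the Fortin initialization of $\bfu^0$) to kill the pressure term, apply the polarization identity to the diffusion part, absorb the right-hand side by Young, sum and telescope, and finally control $\|\bfu^0\|_{1,\edges,0}$ and the forcing via \eqref{norme-h1-interp} and Jensen. The paper's argument differs only in presentation.
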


\begin{proof}
Let $\bfu^{n+1} \in \bfE_\edges$ be a solution to \eqref{stokes:weak}.
Taking $\bfv=\eth_t \bfu^{n+1}$ as test function, we get:
\begin{equation} \label{eq:dtu}
\int_\Omega (\eth_t \bfu^{n+1})^2 \dx
+ \int_\Omega \bfnabla_\edgesd \bfu^{n+1} : \bfnabla_\edgesd (\eth_t \bfu^{n+1}) \dx 
- \int_\Omega p^{n+1}\ \dive_\mesh (\eth_t \bfu^{n+1})\dx 
= \int_\Omega \bff^{n+1} \cdot \eth_t \bfu^{n+1} \dx.
\end{equation}
By linearity of the discrete time derivative and the discrete divergence operators, and thanks to \eqref{eq:schstokes:ins-pression}, we get that $ \dive_\mesh\, (\eth_t \bfu^{n+1}) = \eth_t (\dive_\mesh\, \bfu^{n+1})=0$.
Multiplying \eqref{eq:dtu} by $\deltat$ and summing the result over $n \in \llbracket 0, M \rrbracket$, for $M \in \llbracket 0, N-1 \rrbracket$, we obtain $T_1+T_2=T_3$ where
\[
T_1=\sum_{n=0}^M \deltat \int_\Omega (\eth_t \bfu^{n+1})^2\dx,\ 
T_2=\sum_{n=0}^M \deltat \int_\Omega \bfnabla_\edgesd \bfu^{n+1} : \eth_t(\bfnabla_\edgesd \bfu^{n+1}) \dx \mbox{ and }
T_3=\sum_{n=0}^M \deltat \int_\Omega \bff^{n+1} \cdot \eth_t \bfu^{n+1} \dx. 
\]
We have, by linearity of the discrete gradient operator: 
\[
T_2=\sum_{n=0}^M \bigl(\frac 1 2\ \|\bfu^{n+1}\|_{1 \edges,0}^2-\frac 1 2\ \|\bfu^n\|_{1,\edges,0}^2
+\frac 1 2\ \|\bfu^{n+1}-\bfu^n\|_{1,\edges,0}^2\bigr)
\geq \frac 1 2\ \|\bfu^{M+1}\|_{1,\edges,0}^2-\frac 1 2\ \|\bfu^0\|_{1,\edges,0}^2.
\]
By continuity of the Fortin operator, we have in addition that $|\bfu^0\|_{1,\edges,0} \leq C\ \|\bfu_0\|_{H^1(\Omega)^d}$, with $C$ depending only on $\Omega$ and (in a non-decreasing way) on $\eta_\mesh$.
Let us now turn to $T_3$.
By the Cauchy-Schwarz and the Young inequalities, we obtain:
\[
T_3 \leq \sum_{n=0}^M \deltat \bigr(\int_\Omega |\bff^{n+1}|^2 \dx\bigl)^{1/2} \bigr(\int_\Omega (\eth_t \bfu^{n+1})^2 \dx\bigl)^{1/2}
\\
\leq \frac 1 2 \sum_{n=0}^M \deltat \int_\Omega |\bff^{n+1}|^2 + \frac 1 2 \sum_{n=0}^M \deltat \int_\Omega (\eth_t \bfu^{n+1})^2 \dx,
\]
and the Cauchy-Schwarz inequality, together with the definition of $\bff$, yields for the first term at the right-hand side:
\[
\sum_{n=0}^M \deltat \int_\Omega |\bff^{n+1}|^2 \leq \|\bar \bff\|_{L^2 (0,T;L^2(\Omega)^d)}^2.
\]
Gathering the above inequalities, we get that:
\begin{equation}\label{est}
\sum_{n=0}^M \deltat \int_\Omega (\eth_t \bfu^{n+1})^2 \dx + \|\bfu^{M+1}\|_{1,\edges,0}^2
\le \|\bar \bff\|_{L^2 (0,T;L^2(\Omega)^d)}^2 + \|\bfu_0\|_{H^1(\Omega)^d}^2.
\end{equation}
This in turn yields the $L^2$-estimate \eqref{estideux} (taking $M=N-1$) on the discrete time derivative of the velocity, and the $L^\infty(H^1)$-estimate \eqref{infty} on the velocity itself.
\end{proof}

\medskip
\begin{lemma}[Estimate on the pressure] \label{lem:prs:stokes}
Let $(\bfu,p)\in \bfX_{\mesh,\deltat} \times Y_{\mesh,\deltat}$ be a solution to \eqref{eq:schstokes:ins}.
There exists $C\geq 0$ depending only on $\Omega$, $\bar \bff$ and, in a non-decreasing way, on $\eta_\mesh$, such that:
\begin{equation} \label{stokes:press}
\|p\|_{L^2(0,T;L^2(\Omega))}\leq C.
\end{equation}
\end{lemma}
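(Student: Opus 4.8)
The plan is to reproduce, time step by time step, the pressure estimate of the steady case (Theorem~\ref{prop:est}), but feeding in the \emph{stronger} control on $\eth_t\bfu$ and $\bfu$ provided by Lemma~\ref{lem:estideri}. Fix $n\in\llbracket 0,N-1\rrbracket$. Since $p^{n+1}\in L^2_0(\Omega)$, Ne\v{c}as' inequality \cite{nec-67-met} yields $\bfvarphi\in H^1_0(\Omega)^d$ with $\dive\bfvarphi=p^{n+1}$ and $\|\bfvarphi\|_{H^1_0(\Omega)^d}\le C\,\|p^{n+1}\|_{L^2(\Omega)}$, $C$ depending only on $\Omega$. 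I would use $\bfv=\widetilde{\mathcal P}_\edges\bfvarphi$ as test function in \eqref{stokes:weak}; note $\bfv\in\Hmeshzero$ because $\bfvarphi$ vanishes on $\partial\Omega$. By Lemma~\ref{lem:fortin} and the fact that $p^{n+1}\in L_\mesh$, one has $\dive_\mesh\bfv=\mathcal P_\mesh(\dive\bfvarphi)=\mathcal P_\mesh\,p^{n+1}=p^{n+1}$, while \eqref{norme-h1-interp} gives $\|\bfv\|_{1,\edges,0}\le C_{\eta_\mesh}\,\|p^{n+1}\|_{L^2(\Omega)}$.

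Inserting this $\bfv$ into \eqref{stokes:weak} and using $\int_\Omega p^{n+1}\,\dive_\mesh\bfv\dx=\|p^{n+1}\|_{L^2(\Omega)}^2$, I get
\[
\|p^{n+1}\|_{L^2(\Omega)}^2=\int_\Omega\eth_t\bfu^{n+1}\cdot\bfv\dx+\int_\Omega\bfnabla_\edgesd\bfu^{n+1}:\bfnabla_\edgesd\bfv\dx-\int_\Omega\bff^{n+1}\cdot\bfv\dx.
\]
The first and third terms are bounded, via Cauchy--Schwarz and the discrete Poincar\'e inequality \cite[Lemma 9.1]{book}, by $C\bigl(\|\eth_t\bfu^{n+1}\|_{L^2(\Omega)^d}+\|\bff^{n+1}\|_{L^2(\Omega)^d}\bigr)\|\bfv\|_{1,\edges,0}$, and the second by $\|\bfu^{n+1}\|_{1,\edges,0}\|\bfv\|_{1,\edges,0}$. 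Dividing by $\|p^{n+1}\|_{L^2(\Omega)}$ and using the bound on $\|\bfv\|_{1,\edges,0}$ leads to
\[
\|p^{n+1}\|_{L^2(\Omega)}\le C_{\eta_\mesh}\bigl(\|\eth_t\bfu^{n+1}\|_{L^2(\Omega)^d}+\|\bfu^{n+1}\|_{1,\edges,0}+\|\bff^{n+1}\|_{L^2(\Omega)^d}\bigr).
\]

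To conclude, I would square this inequality, multiply by $\deltat$ and sum over $n\in\llbracket 0,N-1\rrbracket$. The left-hand side is $\|p\|_{L^2(0,T;L^2(\Omega))}^2$; on the right-hand side $\sum_n\deltat\,\|\bff^{n+1}\|_{L^2(\Omega)^d}^2\le\|\bar\bff\|_{L^2(0,T;L^2(\Omega)^d)}^2$ by Jensen's inequality, $\sum_n\deltat\,\|\eth_t\bfu^{n+1}\|_{L^2(\Omega)^d}^2=\|\eth_t\bfu\|_{L^2(0,T;L^2(\Omega)^d)}^2$ is bounded by \eqref{estideux}, and $\sum_n\deltat\,\|\bfu^{n+1}\|_{1,\edges,0}^2=\|\bfu\|_{L^2(0,T;\Hmeshzero)}^2$ is bounded by \eqref{estiLdeux} (alternatively by $T\,\|\bfu\|_{L^\infty(0,T;\Hmeshzero)}^2$ through \eqref{infty}). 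This gives \eqref{stokes:press}. I do not expect a genuine obstacle here: this is the transient analogue of the steady pressure estimate, and the only point requiring care is that the Fortin interpolate reproduces $p^{n+1}$ exactly in the discrete pressure space, which holds precisely because $p^{n+1}$ is piecewise constant on the primal mesh, so that $\mathcal P_\mesh\,p^{n+1}=p^{n+1}$.
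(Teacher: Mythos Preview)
Your proof is correct and follows essentially the same route as the paper: choose $\bfvarphi\in H^1_0(\Omega)^d$ with $\dive\bfvarphi=p^{n+1}$, test \eqref{stokes:weak} with its Fortin interpolate, use $\dive_\mesh(\widetilde{\mathcal P}_\edges\bfvarphi)=\mathcal P_\mesh p^{n+1}=p^{n+1}$, bound the remaining terms via Cauchy--Schwarz, Poincar\'e and \eqref{norme-h1-interp}, then sum in time and invoke \eqref{estideux} and \eqref{estiLdeux}. The only cosmetic difference is that the paper writes the intermediate bound directly in squared form rather than dividing by $\|p^{n+1}\|_{L^2(\Omega)}$ first.
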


\begin{proof}
We follow the same strategy as in the proof of the pressure estimate in Proposition \ref{prop:est}.
Therefore, let $\bfvarphi \in H^1_0(\Omega)^d$ be such that $\dive \bfvarphi = p^{n+1}$ and $\|\bfnabla \bfvarphi\|_{L^2(\Omega)^{d\times d}} \leq C\ \|p^{n+1}\|_{L^2(\Omega)}$, with $C$ depending only on $\Omega$.
Taking $\bfv = \widetilde{\mathcal{P}}_\edges \bfvarphi$ as test function in \eqref{stokes:weak}, we obtain, thanks to \eqref{conserv-div-interp}:
\[
\int_\Omega \eth_t \bfu^{n+1} \cdot \bfv \dx 
+ \int_\Omega \bfnabla_\edgesd \bfu^{n+1} : \bfnabla_\edgesd \bfv \dx
-\| p^{n+1} \|_{L^2(\Omega)}^2 
=\int_\Omega \bff^{n+1} \cdot \bfv \dx.
\] 
Thanks to the Cauchy-Schwarz and Poincar\'e inequalities and to the continuity of the Fortin operator $\widetilde{\mathcal{P}}_\edges$, we then get that there exists $C_{\eta_\mesh}$ depending on $\Omega$ and on the regularity of the mesh such that
\[
\|p^{n+1}\|_{L^2(\Omega)}^2 \leq C_{\eta_\mesh}
\left( \|\eth_t \bfu^{n+1}\|_{(L^2(\Omega))^d}^2 + \|\bfu^{n+1}\|_{1,\edges,0}^2 + \|\bff^{n+1}\|_{L^{2(\Omega)^d}}^2 \right).
\]
Summing this relation over $n \in \llbracket 0, N-1\rrbracket$ and multiplying by $\deltat$ yields the result thanks to \eqref{estiLdeux} and \eqref{estideux}.
\end{proof}

\medskip
\begin{theorem}[Convergence of the scheme, time-dependent Stokes problem]
Let $(\deltat)_\mnn$ and $(\mesh_m,\edges_m)_\mnn$ be a sequence of time steps and meshes such that $(\deltat)_m \to 0$ and $h_{\mesh_m} \to 0$ as $m \to +\infty$; assume that there exists $\eta >0$ such that $\eta_{\mesh_m} \le \eta$ for any $m\in \xN$ (with $\eta_{\mesh_m} $ defined by \eqref{regmesh}).
Let $(\bfu_m,p_m)$ be a solution to \eqref{eq:schstokes:ins} for $\deltat=\deltat_m$ and $\mesh=\mesh_m$.
Then there exists $(\bar \bfu, \bar p) \in L^2(0,T; \bfE(\Omega)) \times L^2(0,T;L^2(\Omega))$ such that, up to a subsequence:
\begin{list}{-}{\itemsep=0.ex \topsep=0.5ex \leftmargin=1.cm \labelwidth=0.7cm \labelsep=0.3cm \itemindent=0.cm}
\item the sequence $(\bfu_m)_\mnn$ converges to $\bar \bfu$ in $L^2(0,T; L^2(\Omega)^d)$,
\item the sequence $(p_m)_\mnn$ weakly converges to $\bar p$ in $ \in L^2(0,T;L^2(\Omega))$,
\item $(\bar \bfu,\bar p)$ is a solution to the weak formulation \eqref{stokes-weak:con}.
\end{list}
\end{theorem}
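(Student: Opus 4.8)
The plan is to follow closely the structure of the proof of Theorem~\ref{theo:conv-instat}, now taking advantage of the stronger estimates of Lemmas~\ref{lem:estideri} and~\ref{lem:prs:stokes}; the new ingredient is that, a bound on the pressure being available, the test functions used in the passage to the limit need no longer be divergence-free, so the discrete pressure term survives the limiting process. I would proceed in three steps: velocity compactness and regularity, weak compactness of the pressure, and passage to the limit in the scheme.

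First, concerning the velocity: by Lemma~\ref{lem:est-vit-inst} the sequence $(\bfu_m)_\mnn$ is bounded in $L^2(0,T;\Hmeshmzero)$, hence in $L^2(0,T;L^2(\Omega)^d)$; by Lemma~\ref{lem:estideri}, $(\eth_t\bfu_m)_\mnn$ is bounded in $L^2(0,T;L^2(\Omega)^d)$, and therefore also in the discrete dual norm appearing in~\eqref{dualnorm}. I would then apply the discrete Aubin--Simon theorem~\ref{th-Aubin-Simon} in the same framework as in the first step of the proof of Theorem~\ref{theo:conv-instat}, but with the exponent $p=2$ (legitimate thanks to the above $L^2$ bound on $\eth_t\bfu_m$), to extract a subsequence converging to some $\bar\bfu$ strongly in $L^2(0,T;L^2(\Omega)^d)$. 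Using the $L^2(0,T;\Hmeshmzero)$ bound, Theorem~\ref{reg-limit}, and the limit-inclusion property of \cite[Theorems 14.1 and 14.2]{book}, I would obtain $\bar\bfu\in L^2(0,T;H^1_0(\Omega)^d)$; reproducing, for a.e.\ $t$, the stationary argument showing $\dive\bar\bfu=0$ (as in the proof of Theorem~\ref{theo:conv-stat}) then gives $\bar\bfu\in L^2(0,T;\bfE(\Omega))$.

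Next, the pressure: Lemma~\ref{lem:prs:stokes} provides a uniform bound on $(p_m)_\mnn$ in $L^2(0,T;L^2(\Omega))$, so up to a further extraction $p_m\rightharpoonup\bar p$ weakly in $L^2(0,T;L^2(\Omega))$; since each $p_m^{n+1}$ has zero mean over $\Omega$, testing this weak convergence against functions depending on $t$ only shows $\int_\Omega\bar p(\bfx,t)\dx=0$ for a.e.\ $t$, i.e.\ $\bar p\in L^2(0,T;L^2_0(\Omega))$.

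Finally, for the passage to the limit I would take $\bfvarphi\in C_c^\infty([0,T[\times\Omega)^d$ (with no divergence constraint), use $\bfvarphi_m^n=\widetilde{\mathcal P}_{\edges_m}\bfvarphi(\cdot,t_n)\in\Hmeshmzero$ as test function in~\eqref{stokes:weak}, multiply by $\deltat_m$ and sum over $n$. The discrete time-derivative term, the diffusion term and the right-hand side term are treated exactly as the terms $T_{1,m}$, the diffusion term and the source term in Step~3 of the proof of Theorem~\ref{theo:conv-instat}, and converge respectively to $-\int_0^T\int_\Omega\bar\bfu\cdot\partial_t\bfvarphi\dx\dt-\int_\Omega\bfuini\cdot\bfvarphi(\cdot,0)\dx$, to $\int_0^T\int_\Omega\bfnabla\bar\bfu:\bfnabla\bfvarphi\dx\dt$, and to $\int_0^T\int_\Omega\bar\bff\cdot\bfvarphi\dx\dt$. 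The genuinely new term is the pressure term $-\sum_n\deltat_m\int_\Omega p_m^{n+1}\dive_{\mesh_m}\bfvarphi_m^n\dx$: by Lemma~\ref{lem:fortin} it equals $-\int_0^T\int_\Omega p_m\,\Psi_m\dx\dt$, where $\Psi_m(\cdot,t)=\mathcal P_{\mesh_m}\bigl(\dive\bfvarphi(\cdot,t_n)\bigr)$ on $(t_n,t_{n+1}]$, and $\Psi_m\to\dive\bfvarphi$ strongly in $L^2(0,T;L^2(\Omega))$ by the smoothness of $\bfvarphi$ in time together with the consistency of the mean-value operator $\mathcal P_{\mesh_m}$. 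Here lies the main, if mild, obstacle: since the pressure compactness is only weak, the pressure term has to be passed to the limit as a weak--strong pairing, which works precisely because the discrete divergence of the Fortin interpolate is strongly consistent; a minor accompanying point is the one-time-step offset between $p_m^{n+1}$ and $\bfvarphi_m^n$, which is absorbed using the Lipschitz-in-time regularity of $\bfvarphi$, exactly as the analogous offsets are handled in the proof of Theorem~\ref{theo:conv-instat}. Collecting the limits of all the terms then shows that $(\bar\bfu,\bar p)$ solves~\eqref{stokes-weak:con}.
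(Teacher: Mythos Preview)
Your proposal is correct and follows essentially the same route as the paper: velocity compactness via the discrete Aubin--Simon argument, weak pressure compactness from Lemma~\ref{lem:prs:stokes}, and passage to the limit using the Fortin interpolate $\widetilde{\mathcal P}_{\edges_m}\bfvarphi(\cdot,t_n)$ as test function, with the pressure term handled as a weak--strong pairing thanks to the divergence-preservation property~\eqref{conserv-div-interp}. Two cosmetic differences: the paper simply invokes Theorem~\ref{theo:conv-instat} for the velocity convergence rather than redoing Aubin--Simon with exponent $p=2$, and it writes the pressure term directly as $\int_\Omega p_m^{n+1}\dive\bfvarphi(\bfx,t_n)\dx$ (using that $p_m^{n+1}\in L_{\mesh_m}$ so the projection $\mathcal P_{\mesh_m}$ is transparent) rather than introducing your intermediate function $\Psi_m$, but these are equivalent formulations of the same argument.
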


\begin{proof}
The convergence of the sequence of discrete solutions of the velocity follow from Theorem \ref{theo:conv-instat} and the weak convergence of the sequence of discrete solutions of the pressure in $L^2(0,T;L^2(\Omega))$ follow from the estimate \eqref{stokes:press}. 
Let us then show that $(\bar \bfu,\bar p)$ satisfies \eqref{stokes-weak:con}. 
Let $\bfvarphi \in C_c^\infty(\Omega\times[0,T))^d$.
Taking $\bfvarphi_m^n= \widetilde{\mathcal P}_{\edges_m} \bfvarphi(\cdot,t_n) \in \Hmeshmzero$ as test function in \eqref{stokes:weak}, multiplying by $\deltat_m$ and summing for $n \in \llbracket 0, N_m-1 \rrbracket$ (with $N_m \deltat_m = T$), we obtain:
\begin{multline*}
\sum_{n=0}^{N_m-1} \deltat_m \Bigl( \int_\Omega \eth_t \bfu_m^{n+1} \cdot \bfvarphi_m^n \dx 
+ \int_\Omega \bfnabla_{\edgesd_m} \bfu_m^{n+1} : \bfnabla_{\edgesd_m} \bfvarphi_m^n \dx
\\ 
- \int_\Omega p^{n+1}_m \dive_{\mesh_m}\, \bfvarphi_m^n \dx
- \int_\Omega \mathcal \bff_m^{n+1} \cdot \bfvarphi_m^n \dx \Bigr) =0.
\end{multline*}
Let us deal with the pressure term (all other terms of the equation can be dealt with as in the proof of Theorem \ref{theo:conv-instat}).
We have, by the divergence preservation property of the Fortin operator:
\[
\int_\Omega p^{n+1}_m \dive_{\mesh_m}\, \bfvarphi_m^n \dx
= \int_\Omega p^{n+1}_m \dive \bfvarphi(\bfx,t_n) \dx.
\]
Hence, thanks to the regularity of $\bfvarphi$ (\ie\ the fact that $|\bfvarphi(\bfx,t)- \bfvarphi(\bfx,t_n)|\leq C_\varphi\,\deltat_m$ for $\bfx \in \Omega$ and $t \in (t_n,t_{n+1})$) and the weak convergence of $p_m$ to $\bar p$,
\begin{multline*}
-\sum_{n=0}^{N_m-1} \deltat_m \int_\Omega p^{n+1}_m \dive_{\mesh_m}\, \bfvarphi_m^n \dx =
-\sum_{n=0}^{N_m-1} \deltat_m \int_\Omega p^{n+1}_m \dive \bfvarphi(\bfx,t_n) \dx
\\
\to -\int_0^T \int_\Omega \bar p\ \dive \bfvarphi(\bfx,t) \dx\dt
\mbox{ as } m \to +\infty.
\end{multline*}
\end{proof}
%
% -------------------------------------------------------------------------------------------------------------------------------------------------
%
\section{Appendix: Discrete functional analysis}\label{sec5:appendix}

\begin{definition}[Compactly embedded sequence of spaces] \label{Compact-embeded}
Let $B$ be a Banach space; a sequence $(X_m)_{\mnn}$ of Banach spaces included in $B$ is compactly embedded in $B$ if any sequence $(u_m)_{m\in \xN}$ satisfying:
\begin{itemize}
\item $u_m\in X_m$ ($\forall m\in\xN$),
\item the sequence $(\|u_m\|_{X_m})_{\mnn}$ is bounded, 
\end{itemize}
is relatively compact in $B$.
\end{definition}

\medskip
\begin{definition}[Compact-continuous sequence of spaces] \label{def:compact-continuous}
 Let B be a Banach space, and let $(X_m)_{\mnn}$ and $(Y_m)_{\mnn}$ be sequences of Banach spaces such that $X_m \subset B$ 
 for $m\in \xN$. The sequence $(X_m,Y_m)_{\mnn}$ is compact-continuous in $B$ if the
 following conditions are satified:
\begin{itemize}
\item[$\bullet$] The sequence $(X_m)_{\mnn}$ is compactly embedded in $B$ (see Definition \ref{Compact-embeded}),
\item[$\bullet$] $X_m\subset Y_m$ (for all $\mnn$),
\item[$\bullet$] if the sequence $(u_m)_{\mnn}$ is such that $u_m\in X_m$ (for all $\mnn$), $(\|u_m\|_{X_m})_{\mnn}$ is bounded and $\|u_m\|_{Y_m}\rightarrow 0$ as $m\to +\infty$, then any subsequence of $(u_m)_{\mnn}$ converging in $B$ converges to $0$ (in $B$).
\end{itemize}
\end{definition}

The following theorem is proved \cite{che-14-ext} and is a generalization of a previous work carried out in \cite{gal-12-com}.
\begin{theorem}[Aubin-Simon Theorem with a sequence of subspaces and a discrete derivative.] \label{th-Aubin-Simon}
Let $1 \leq p < \infty$, let B be a Banach space, and let $(X_m)_{\mnn}$ and $(Y_m)_{\mnn}$ be sequences of Banach spaces such that $X_m \subset B$ for $m\in \xN$. 
We assume that the sequence $(X_m,Y_m)_{\mnn}$ is compact-continuous in $B$.
Let $T>0$ and $(u^{(m)})_{\mnn}$ be a sequence of $L^{p}(0,T;B)$ satisfying the following conditions:
\begin{enumerate}
\item[$\bullet$](H1)
the sequence $(u^{(m)})_{\mnn}$ is bounded in $L^{p}(0,T;B)$.
\item[$\bullet$](H2)
the sequence $(\|u^{(m)}\|_{L^{1}(0,T;X_m)})_{\mnn}$ is bounded.
\item[$\bullet$](H3)
the sequence $(\|\eth_t u^{(m)}\|_{L^{p}(0,T;Y_m)})_{\mnn}$ is bounded.
\end{enumerate}
Then there exists $u\in L^{p}(0,T;B)$ such that, up to a subsequence, $u^{(m)}\rightarrow u$ in $L^{p}(0,T;B)$.
\end{theorem}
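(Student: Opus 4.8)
\noindent The plan is to run the classical Aubin--Simon compactness argument, adapted to a \emph{sequence} of spaces $(X_m,Y_m)_{\mnn}$ and to the discrete time derivative $\eth_t$; it rests on two ingredients, a uniform Ehrling--Lions interpolation inequality and a time-translation estimate controlled by $\eth_t$. The first step, which is the core of the proof, is to establish that for every $\varepsilon>0$ there exists $C_\varepsilon\ge 0$, \emph{independent of $m$}, such that
\[
\|v\|_B \le \varepsilon\,\|v\|_{X_m} + C_\varepsilon\,\|v\|_{Y_m},\qquad \forall\,m\in\xN,\ \forall\,v\in X_m .
\]
This is proved by contradiction: if it failed there would be $\varepsilon_0>0$, indices $m_k$ and $v_k\in X_{m_k}$ with $\|v_k\|_B=1$, $\|v_k\|_{X_{m_k}}\le 1/\varepsilon_0$ and $\|v_k\|_{Y_{m_k}}\to 0$; after relabelling so that $(m_k)_k$ is either constant or tends to $+\infty$, the bound on $(\|v_k\|_{X_{m_k}})_k$ and the compact embedding of $(X_m)_{\mnn}$ in $B$ (Definition~\ref{Compact-embeded}) provide a subsequence converging in $B$ to some $v$ with $\|v\|_B=1$, whereas the third condition in Definition~\ref{def:compact-continuous} forces any such limit to vanish, a contradiction.

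\medskip\noindent Next I would prove the two quantitative estimates that feed Simon's characterization of compactness in $L^p(0,T;B)$. First, for $0\le t_1<t_2\le T$ the Bochner integral $\int_{t_1}^{t_2}u^{(m)}(t)\,\dt$ lies in $X_m$ with $X_m$-norm bounded by $\|u^{(m)}\|_{L^1(0,T;X_m)}$, hence by (H2) uniformly in $m$; by Definition~\ref{Compact-embeded} the family of these integrals (equivalently, of the local time averages of the $u^{(m)}$) is then relatively compact in $B$. Second, writing $u^{(m)}$ as a piecewise-constant-in-time function and telescoping the jumps, $u^{(m)}(\cdot+h)-u^{(m)}(\cdot)$ is recovered as the time integral of $\eth_t u^{(m)}$ over a window of length $h$, up to a correction of the order of the time step, so that (H3) gives
\[
\|u^{(m)}(\cdot+h)-u^{(m)}(\cdot)\|_{L^p(0,T-h;Y_m)}\le C\,h
\]
uniformly in $m$.

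\medskip\noindent To conclude, one combines the two estimates: applying the Ehrling inequality pointwise in $t$ to $v=u^{(m)}(t+h)-u^{(m)}(t)\in X_m$, integrating in time, using (H2) for the $X_m$-part and the translation estimate for the $Y_m$-part, yields $\sup_m\|u^{(m)}(\cdot+h)-u^{(m)}(\cdot)\|_{L^p(0,T-h;B)}\to 0$ as $h\to 0$. Together with the relative compactness of the time averages in $B$, this is precisely Simon's (Kolmogorov--Riesz type) criterion for relative compactness of $(u^{(m)})_{\mnn}$ in $L^p(0,T;B)$; a subsequence therefore converges to some $u$, and (H1) ensures $u\in L^p(0,T;B)$.

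\medskip\noindent The main obstacle is the uniform Ehrling inequality: this is the only place where the two defining properties of a compact-continuous sequence are both used, and the contradiction argument must be organized so that they can be invoked along the \emph{same} extracted subsequence --- in particular the case where the indices $m_k$ do not escape to infinity must be reduced to the case of a constant sequence. A second delicate point is propagating the weak controls (the dual norm $Y_m$ in (H3), and only an $L^1$-in-time control of the $X_m$-norm in (H2)) up to a statement in $L^p(0,T;B)$ with the full exponent $p$; this is why one routes the argument through Simon's averaged criterion rather than a naive Kolmogorov estimate, and why the discrete time step appearing in the translation estimate has to be tracked with some care.
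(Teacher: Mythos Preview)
The paper does not prove this theorem at all: it is quoted from \cite{che-14-ext} (generalizing \cite{gal-12-com}) and merely cited for use in Section~\ref{sec:unsteady}. So there is no ``paper's own proof'' to compare against; your proposal is a reconstruction of the argument in those references.

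Your overall strategy --- uniform Ehrling--Lions lemma for the sequence of pairs $(X_m,Y_m)$, then Simon's Kolmogorov--Riesz criterion in $L^p(0,T;B)$ --- is indeed the one used in the cited works, and the contradiction argument for the uniform Ehrling inequality is essentially correct (with the minor caveat that the ``$(m_k)$ constant'' branch tacitly uses that each individual $X_{m_0}\hookrightarrow B$ is compact, which Definition~\ref{Compact-embeded} does not literally provide; in the applications this is harmless since the $X_m$ are finite-dimensional, but in the abstract setting the contradiction argument should be organised without this case split).

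There is, however, a genuine gap in your translation step. You write that applying Ehrling pointwise to $v=u^{(m)}(t+h)-u^{(m)}(t)$ and ``integrating in time, using (H2) for the $X_m$-part'' yields smallness of $\|u^{(m)}(\cdot+h)-u^{(m)}(\cdot)\|_{L^p(0,T-h;B)}$. But integrating the pointwise inequality in the $L^p$-norm produces a term $\varepsilon\,\|u^{(m)}(\cdot+h)-u^{(m)}(\cdot)\|_{L^p(0,T-h;X_m)}$, and (H2) only gives a bound in $L^1(0,T;X_m)$, not $L^p$. For $p>1$ this term is simply not controlled by your hypotheses, so the translation estimate in $L^p(0,T;B)$ does not follow. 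Routing through Simon's ``averaged'' criterion, as you suggest at the end, does not by itself resolve this: condition (i) of Simon's criterion (relative compactness of $\int_{t_1}^{t_2}u^{(m)}$ in $B$) is indeed fine from (H2), but condition (ii) still demands the $L^p(B)$-translation estimate.

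The fix used in the references is an additional time-mollification: one introduces the moving average $M_a u^{(m)}(t)=\frac{1}{a}\int_t^{t+a}u^{(m)}$, proves that $(M_a u^{(m)})_m$ is relatively compact in $C([0,T{-}a];B)$ for each fixed $a>0$ (Ascoli, using that $\|M_a u^{(m)}(t)\|_{X_m}\le a^{-1}\|u^{(m)}\|_{L^1(X_m)}$ and equicontinuity from (H3) via Ehrling), and then shows $\sup_m\|u^{(m)}-M_a u^{(m)}\|_{L^p(B)}\to 0$ as $a\to 0$. It is in this last step that the $L^1(X_m)$ bound is correctly leveraged, and this is the missing ingredient in your sketch.
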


\medskip
\begin{definition}[$B$-limit-included] \label{def:blimit}
Let $B$ be a Banach space, $(X_m)_{m\in\xN}$ be a sequence of Banach spaces included in $B$ and $X$ be a Banach space included in $B$. 
The sequence $(X_m)_{m\in\xN}$ is $B$-limit-included in $X$ if there exists $C\in\xR$ such that if $u$ is the limit in $B$ of a subsequence of a sequence $(u_m)_{\mnn}$ verifying $u_m\in X_m$ and $\|u_m\|_{X_m}\leq 1$, then $u\in X$ and $\|u\|_{X}\leq C$.
\end{definition}

The regularity of a possible limit of approximate solutions may be proved thanks to the theorem which we recall below \cite[Theorem B1]{gal-14-ana}. 
\begin{theorem}[Regularity of the limit] \label{reg-limit}
Let $1\leq p< \infty$ and $T>0$. Let $B$ be a Banach space, $(X_m)_{m\in\xN}$ be a sequence of Banach spaces included in $B$ and $B$-limit-included in $X$ (where $X$ is a Banach space included in $B$).
Let $T>0$ and, for $m\in\xN$, Let $u_m \in L^{p}(0,T;X_m)$. 
We assume that the sequence $(\| u_m\|_{L^{p}(0,T;X_m)})_{m\in\xN}$ is bounded and that $u_m\rightarrow u$ a.e. as $m\rightarrow \infty$. 
Then $u\in L^{p}(0,T;X)$.
\end{theorem}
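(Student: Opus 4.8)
The plan is to argue pointwise in the time variable and then integrate. Let $M$ be a bound for the sequence $\big(\|u_m\|_{L^p(0,T;X_m)}\big)_{\mnn}$ and set $g(t):=\liminf_{m\to+\infty}\|u_m(t)\|_{X_m}$ for $t\in(0,T)$; since each $u_m$ lies in $L^p(0,T;X_m)$ the map $t\mapsto\|u_m(t)\|_{X_m}$ is measurable, hence so is $g$, and Fatou's lemma gives
\[
\int_0^T g(t)^p \dt = \int_0^T \liminf_{m\to+\infty}\|u_m(t)\|_{X_m}^p \dt
\le \liminf_{m\to+\infty}\int_0^T \|u_m(t)\|_{X_m}^p \dt \le M^p .
\]
Thus $g\in L^p(0,T)$, so in particular $g(t)<+\infty$ for a.e.\ $t$.

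Next I would fix a time $t$ in the full-measure set on which $g(t)<+\infty$ and $u_m(t)\to u(t)$ in $B$, and apply the $B$-limit-included property of Definition~\ref{def:blimit} at this $t$. Since that property is phrased for full sequences, I would pick a subsequence $(m_k)_{k}$, depending on $t$, with $\|u_{m_k}(t)\|_{X_{m_k}}\to g(t)$, so that $\|u_{m_k}(t)\|_{X_{m_k}}\le 1+g(t)$ for $k\ge k_0$, and define $w_{m_k}=u_{m_k}(t)/(1+g(t))$ for $k\ge k_0$ and $w_m=0$ for all other indices. Then $w_m\in X_m$ and $\|w_m\|_{X_m}\le 1$ for every $m$, while $(w_{m_k})_{k\ge k_0}$ converges in $B$ to $u(t)/(1+g(t))$; the definition then yields $u(t)\in X$ and $\|u(t)\|_X\le C\,(1+g(t))$, where $C$ is the constant from Definition~\ref{def:blimit}.

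Combining the two steps, $u(t)\in X$ for a.e.\ $t$ and $\|u(t)\|_X\le C\,(1+g(t))$, the right-hand side being in $L^p(0,T)$; hence $\|u(\cdot)\|_X\in L^p(0,T)$. It then remains to check that $u$ is strongly measurable with values in $X$. Each $u_m$ is strongly measurable into $X_m$, hence into $B$ (the inclusion $X_m\subset B$ being continuous), so $u$, as an a.e.\ $B$-limit of the $u_m$, is strongly $B$-measurable; together with $u(t)\in X$ a.e.\ and $\|u(\cdot)\|_X\in L^p(0,T)$, a standard Bochner-space argument based on Pettis' measurability theorem — using that in every application made in this paper $X$ is a separable reflexive space (typically $X=H^1_0(\Omega)^d$, $B=L^2(\Omega)^d$) continuously and injectively embedded in $B$ — upgrades this to strong $X$-measurability, whence $u\in L^p(0,T;X)$.

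The genuinely delicate point is this last measurability upgrade: the $X_m$ are \emph{not} subspaces of $X$ (discrete MAC functions are not in $H^1$), so $u$ is not a priori an $X$-valued limit, and identifying it as such is precisely the nontrivial content of \cite[Theorem~B1]{gal-14-ana}. The only other mild subtlety is organisational, namely that the $B$-limit-included property is stated for full sequences, which is why the $t$-dependent subsequence above has to be completed with zeros.
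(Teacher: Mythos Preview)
The paper itself does not prove this theorem; it only recalls the statement and cites \cite[Theorem~B1]{gal-14-ana}. Your argument is the natural one and is correct: Fatou's lemma places $g(t)=\liminf_m\|u_m(t)\|_{X_m}$ in $L^p(0,T)$, and then a pointwise-in-$t$ application of Definition~\ref{def:blimit} --- after normalising by $1+g(t)$ and completing the $t$-dependent subsequence with zeros so as to obtain a full sequence with $w_m\in X_m$ and $\|w_m\|_{X_m}\le 1$ --- yields $u(t)\in X$ and $\|u(t)\|_X\le C(1+g(t))$ for a.e.\ $t$, hence $\|u(\cdot)\|_X\in L^p(0,T)$.

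Your own diagnosis of the residual measurability issue is accurate and honest: strong $B$-measurability of $u$ together with $u(t)\in X$ a.e.\ does not, for arbitrary Banach spaces, automatically give strong $X$-measurability, since the $X_m$ are not subspaces of $X$. The fix you propose is the right one in the context of this paper: in the only application made here one has $X=H^1_0(\Omega)^d$, which is separable, and the restrictions of functionals in $B'=L^2(\Omega)^d$ are dense in $X'=H^{-1}(\Omega)^d$, so weak --- and hence, by Pettis' theorem, strong --- $X$-measurability of $u$ follows from its $B$-measurability together with the a.e.\ finiteness of $\|u(t)\|_X$. The argument is therefore complete for the use made of the theorem in this paper, even if the abstract statement as written tacitly relies on such an extra hypothesis.
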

%
% -------------------------------------------------------------------------------------------------------------------------------------------------
%
\bibliographystyle{abbrv}
\bibliography{mac}
\end{document}